\documentclass[12pt,a4paper]{article}

\usepackage{amssymb}
\usepackage{amsmath, amsthm}
\usepackage{arydshln}
\usepackage{epsfig}
\usepackage{setspace}

\usepackage{geometry}

\geometry{left=27mm,right=27mm,top=27mm,bottom=27mm}

\def\RR{\mathbb{R}}
\def\ZZ{\mathbb{Z}}
\def\QQ{\mathbb{Q}}
\def\FF{\mathbb{F}}
\def\KK{\mathbb{K}}

\numberwithin{equation}{section}

\newcommand{\rank}{\mathop{\rm rank} }

\newcommand{\Det}{\mathop{\rm Det} }
\newcommand{\ncrank}{\mathop{\rm nc\mbox{-}rank} }

\newtheorem{Thm}{Theorem}[section]
\newtheorem{Prop}[Thm]{Proposition}
\newtheorem{Lem}[Thm]{Lemma}
\newtheorem{Cor}[Thm]{Corollary}
\theoremstyle{definition}

\newtheorem{Rem}[Thm]{Remark}
\newtheorem{Ex}[Thm]{Example}

\title{Computing the degree of determinants via discrete convex optimization on Euclidean buildings}
\author{Hiroshi HIRAI \\
Department of Mathematical Informatics, \\
Graduate School of Information Science and Technology,   \\
The University of Tokyo, Tokyo, 113-8656, Japan.\\
\texttt{\normalsize hirai@mist.i.u-tokyo.ac.jp}
}
\begin{document}

\maketitle
\begin{abstract}
	In this paper, we consider the computation of 
	the degree of the Dieudonn\'e determinant of
	a linear symbolic matrix 
	$
	A = A_0 + A_1 x_1 + \cdots + A_m x_m,
	$
	where each $A_i$ is an $n \times n$ polynomial matrix 
	over $\KK[t]$ and
	$x_1,x_2,\ldots,x_m$ are pairwise ``non-commutative" variables.
	This quantity is regarded as a weighted generalization of 
	the non-commutative rank (nc-rank) of a linear symbolic matrix, and its computation
	is shown to be a generalization of several basic combinatorial optimization problems, 
	such as weighted bipartite matching and weighted linear matroid intersection problems.

	Based on the work on nc-rank by Fortin and Rautenauer (2004), and Ivanyos, Qiao, and Subrahmanyam (2018), we develop a framework to compute 
	the degree of the Dieudonn\'e determinant of a linear symbolic matrix.
	We show that the deg-det computation reduces to 
	a discrete convex optimization problem on 
	the Euclidean building for ${\rm SL}(\KK(t)^n)$.
	To deal with this optimization problem, 
	we introduce a class of discrete convex functions on the building.
	This class is a natural generalization of L-convex functions in 
	discrete convex analysis (DCA).
	We develop a DCA-oriented algorithm (steepest descent algorithm) 
	to compute the degree of determinants.
    Our algorithm works with matrix computation on $\KK$, and uses a subroutine to compute 
    a certificate vector subspace for the nc-rank, 
    where the number of calls of the subroutine is sharply estimated.
    Our algorithm enhances 
    some classical combinatorial optimization algorithms with new insights, and
    is also understood as a variant of 
    the combinatorial relaxation algorithm,
    which was developed  earlier by Murota
    for computing the degree of the (ordinary) determinant.
\end{abstract}

Keywords: non-commutative rank, Dieudonn\'e determinant, skew field, 
discrete convex analysis, mixed matrix, combinatorial relaxation algorithm, submodular function, L-convex function, Euclidean building, uniform modular lattice.

\section{Introduction}
A {\em linear symbolic matrix} or {\em linear matrix} $A$ 
is a matrix each of whose entries is a linear (affine) function in variables $x_1,x_2,\ldots, x_m$. 
Namely $A$ admits the form of
\begin{equation}\label{eqn:A}
A = A_0 + A_1 x_1 + \cdots + A_m x_m,
\end{equation}
where each $A_i$ is a matrix over a field $\mathbb{K}$ and 
$A$ is viewed as a matrix over the polynomial ring 
$\mathbb{K}[x_1,x_2,\ldots,x_m]$ or the rational function field $\mathbb{K}(x_1,x_2,\ldots,x_m)$. 
In this paper, we address the symbolic rank computation of linear matrices
and its generalization.
This problem is a fundamental problem in discrete mathematics and computer science.
In a classical paper~\cite{Edmonds67} in combinatorial optimization,
Edmonds noticed that 
the maximum matching number of a bipartite graph
is represented as the rank of 
such a matrix: For each edge $e = ij$, 
introduce a variable $x_e$ and a matrix $E_{e}$ having $1$ for 
the $(i,j)$-entry and zero for the others. 
Then the rank of linear matrix $A = \sum_{e} E_e x_e$ 
(with  $A_0 = 0$ in (\ref{eqn:A})) is equal to 
the maximum matching number of the graph.
In the same paper, Edmonds asked for a polynomial time algorithm
to compute the rank of a general linear matrix.
Clearly the rank computation is easy if 
we substitute an actual number for each variable.  
According to random substitution,
Lov\'asz~\cite{Lovasz79} developed 
a randomized polynomial time algorithm to compute the rank of linear matrices.
Designing a deterministic polynomial time algorithm is 
a big challenge in theoretical computer science, 
since it would lead to a breakthrough in circuit complexity theory~\cite{KabanetsImpagliazzo04}. 
Currently, such deterministic algorithms are known 
for very restricted classes of linear matrices.
Most of them are connected to 
polynomially-solvable combinatorial optimization problems, 
such as matching, matroid intersection, 
and their generalizations; see~\cite{Lovasz89}.

The rank of linear matrices also plays important roles in engineering applications.
A {\em mixed matrix}~due to Murota and Iri~\cite{MurotaIri85} 
is a linear matrix including constant matrix $A_0$ in the above bipartite graph example.
Mixed matrices are applied to the analysis of a linear control system 
including physical parameters (such as positions, temperatures) that cannot be measured exactly, 
where these parameters are modeled as variables $x_i$.
Then the rank describes several fundamental characteristics (such as the controllability) of the system.
See~\cite{MurotaMatrix} for the theory of mixed matrices and its applications.
The (infinitesimal) {\em rigidity} of a bar-and-joint structure 
is also characterized by 
the rank of a linear matrix, called the {\em rigidity matrix},  
where variables $x_i$ represent ``generic" positions of the structure.
Polynomial-time rank computation is 
known for mixed matrices (see \cite{MurotaMatrix}) and 
some classes of rigidity matrices (see e.g.,~\cite{Lovasz89}). 

Recently there are significant developments in 
the rank computation of linear matrices. 
In the above paragraph,
we assumed that $A$ 
is a matrix over polynomial ring $\KK[x_1,x_2,\ldots, x_m]$ 
with (commutative) indeterminates $x_1,x_2,\ldots,x_m$, 
and the rank is considered in rational function field $\KK(x_1,x_2,\ldots, x_m)$. 
However $A$ can  also be viewed as a matrix over the free ring $\KK\langle x_1,x_2,\ldots,x_m \rangle$ generated by $x_1,x_2,\ldots,x_m$, 
where variables are supposed to be pairwise non-commutative, i.e., $x_i x_j \neq x_j x_i$. 
It is shown by Amitsur~\cite{Amitsur1966} that 
there is a non-commutative analogue $\KK(\langle x_1,x_2,\ldots,x_m \rangle)$ of the rational function field, called the {\em free skew field}, to which
$\KK\langle x_1,x_2,\ldots,x_m \rangle$ is embedded. 
Now we can define the rank of $A$ over this skew field 
$\KK(\langle x_1,x_2,\ldots,x_m \rangle)$.
This rank concept of $A$ is called 
the {\em non-commutative rank} ({\em nc-rank}) of $A$.
Fortin and Rautenauer~\cite{FortinReutenauer04} proved 
a formula of the nc-rank, which says that the nc-rank is equal to the optimal value of 
an optimization problem over the lattice of all vector subspaces of $\KK^n$.
Garg, Gurvits, Oliveira, and Wigderson~\cite{GGOW15} 
proved that the nc-rank of $A$ can be computed in deterministic 
polynomial time if $\KK = \QQ$.
They showed that Gurvits' {\em operator scaling algorithm}~\cite{Gurvits04}, 
which was earlier developed for the rank computation of 
a special class (called {\em Edmonds-Rado class}) of linear matrices, 
can be a polynomial time algorithm for the nc-rank. 
Ivanyos, Qiao, and Subrahmanyam~\cite{IQS15a,IQS15b}
developed a polynomial time algorithm to compute the nc-rank 
over an arbitrary field. 
Their algorithm is  
a vector-space analogue of the augmenting path algorithm for bipartite matching,
and utilizes an invariant theoretic result 
by Derksen and Makam~\cite{DerksenMakam17} for the complexity estimate. 
Independent of this line of research, 
Hamada and Hirai~\cite{HamadaHirai17} investigated (a variant of) 
the optimization problem for the nc-rank which they called 
the {\em maximum vanishing subspace problem (MVSP)}.
Their motivation comes from a canonical form of 
a matrix under block-restricted transformations~\cite{ItoIwataMurota94,IwataMurota95}. 
They also developed a polynomial time algorithm for the nc-rank
based on the fact that MVSP is viewed as a submodular function optimization 
on the modular lattice of all vector subspaces of $\KK^n$.

In this paper, 
we consider a ``weighted analogue" of the nc-rank.
Our principal motivation is 
to capture the weighted versions of combinatorial optimization problems
from the non-commutative points of view.
Consider, for example, 
the weighted matching problem on a bipartite graph, where
two color classes are supposed to have the same cardinality, 
and each edge $e$ has (integer) weight $c_e$. 
By introducing new indeterminate~$t$, 
modify the above bipartite-graph linear matrix $A$ as 
$A := \sum_{e} t^{c_e} x_{e} E_e$.
Then the maximum weight of a perfect matching
is equal to the degree of the determinant of $A$ with respect to $t$.
This well-known example suggests that 
 such a weighted analogue is the degree of the determinant.
This motivates us to consider the degrees of the determinants of linear matrices 
in the non-commutative setting.

The main contribution of this paper is to develop 
a computational framework for  
the degrees of the determinants in the non-commutative setting,  
which captures some of classical weighted combinatorial optimization problems. 
Our results and their features are summarized as follows:
\begin{itemize}
	\item For a determinant concept for matrices over skew field $\FF$, 
	     we consider the {\em Dieudonn\'e determinant}~\cite{Dieudonne1943}.
	     Although the value of the Dieudonn\'e determinant is 
	     no longer an element of the ground field, 
	     in the skew field $\FF(t)$ of rational functions ({\em Ore quotient ring} of polynomial ring $\FF[t]$), 
	     its degree is well-defined, see e.g., \cite{Taelman06}.
	     In this paper, we use the notation $\Det$ 
	     for the Dieudonn\'e determinant, whereas  
	     $\det$ is used for the ordinary determinant.
	     
	     Our target is 
	     the degree $\deg \Det A$ of the Dieudonn\'e determinant $\Det A$ 
	     of a linear matrix $A = A_0+ A_1 x_1 + \cdots  + A_m x_m$, 
	     where each $A_i$ is a square polynomial matrix over $\KK[t]$
	     and $A$ is viewed as a matrix over
	     the rational function skew field $\FF(t)$ of the free skew field 
	     $\FF = \KK(\langle x_1,x_2,\ldots,x_m \rangle)$.

	\item  We establish a duality theorem for $\deg \Det$, 
	which is a natural generalization of the Fortin-Rautenauer formula for the nc-rank.
	In fact, 
	a weak duality relation was previously observed by Murota~\cite{Murota95_SICOMP} for $\deg \det$, and is now a strong duality for $\deg \Det$. 
    Analogously to the Fortin-Rautenauer formula saying that
	the nc-rank is equal to the optimal value of an 
	optimization problem (MVSP) over the lattice of all vector subspaces of $\KK^n$,  
	our formula says that $\deg \Det$ is equal to the optimal value of 
	an optimization problem over the lattice 
	of all full-rank $\KK(t)^-$-submodules of $\KK(t)^n$, 
	where $\KK(t)^-$ is the valuation ring of $\KK(t)$ 
	with valuation $\deg$.
	In the literature of group theory, 
	this lattice structure is known as the {\em Euclidean building} for ${\rm SL}(\KK(t)^n)$~\cite{BruhatTits}, 
	whereas the lattice of all vector subspaces is the 
	{\em spherical building} for ${\rm SL}(\KK^n)$~\cite{Tits}.
	
	\item  We approach this optimization problem on the Euclidean building 
	from {\em Discrete Convex Analysis (DCA)}~\cite{MurotaBook} 
	with its recent generalization~\cite{HH17survey,HH16L-convex}. 
	Although DCA was originally a theory of discrete convex functions on $\ZZ^n$ that generalizes matroids and submodular functions, 
	recent study~\cite{HH17survey,HH16L-convex}
	shows that DCA-oriented concepts and algorithm design 
	are effective and useful for optimization problems 
	on certain discrete structures beyond $\ZZ^n$.
   {\em L-convexity}, which is one of the central concepts of DCA, is particularly important for us.
	L-convex functions are generalization of submodular functions
	and arise naturally from representative combinatorial 
	optimization problems 
	such as minimum-cost network flow and weighted bipartite matching.
	L-convex functions admit a simple minimization algorithm, 
	called the {\em steepest descent algorithm (SDA)}, 
	on which our algorithm for deg Det will be built. 

	We introduce an analogue of an L-convex function 
	on the building. 
	The previous work~\cite{HH17survey,HH16L-convex} introduced L-convexity on Euclidean buildings of type C, whereas our building here is of type A. 
	We show that the established formula of $\deg \Det$ 
	gives rise to an L-convex function, 
	analogously to the submodular function in MVSP for the nc-rank. 
	Consequently $\deg \Det$ is computed via an 
	L-convex function minimization on the Euclidean building.

	\item We develop an algorithm to compute $\deg \Det A$ for linear polynomial matrices $A$ 
	over $\KK[t]$. 
	Our algorithm requires a subroutine to solve MVSP over $\KK$, and is described in terms of matrix computation over $\KK$. 
	However it can be viewed as the steepest descent algorithm (SDA) 
	applied to the L-convex function on the Euclidean building.
	Geometrically, SDA traces the 1-skeleton of the building 
	with decreasing the value of the L-convex function.
	Each move is done by solving an optimization problem 
	on the spherical building associated with the local structure of the Euclidean building.
	This local problem coincides with MVSP.  
	By utilizing the recent analysis on SDA~\cite{MurotaShioura14}, 
	we show that the number of the moves
	is sharply estimated by the {\em Smith-McMillan form} of $A$.
	
	Our algorithm can also be interpreted as a variant of
	the {\em combinatorial relaxation algorithm},  
	which was  developed earlier for 
	$\deg \det$ of matrices (without variables) by Murota~\cite{Murota90_SICOMP,Murota95_SICOMP} and was further extended to mixed polynomial matrices by Iwata and Takamatsu~\cite{IwataTakamatsu13}; see~\cite[Section 7.1]{MurotaMatrix} 
	and recent work~\cite{IwataOkiTakamatsu17}.
	This interpretation sheds building-theoretic insights on 
	the combinatorial relaxation algorithm.
	
	\item We study a class of linear matrices $A$ for which 
	$\deg \det A = \deg \Det A$ holds.
	In the case of the nc-rank, 
	it is known from the results in~\cite{IvanyosKarpinskiOiaoSantha15, Lovasz79} that if each $A_i$ other than $A_0$
	is a rank-1 matrix over $\KK$, then it holds 
	$\rank A = \ncrank A$.
	We show a natural extension: if each $A_i$ other than $A_0$ 
	is a rank-1 matrix over $\KK(t)$, then it holds $\deg \det A = \deg \Det A$.
	This property implies that some of
	classical combinatorial optimization problems, represented as $\deg \det$, 
	fall into our framework of $\deg \Det$.
	Examples include weighted bipartite matching and weighted linear matroid intersection.
	In these examples, the optimal value is interpreted as $\deg \det$ as well as
	$\deg \Det$.
	A {\em mixed polynomial matrix}~\cite{MurotaMatrix} is also such an example.
	We explain how our SDA framework works for these examples, and discuss connections to some of classic algorithms, such as the Hungarian method,  the matroid greedy algorithm, and the matroid intersection algorithm by Lawler~\cite{Lawler75} and Frank~\cite{Frank81}. 
	For mixed polynomial matrices,
	our framework brings a new algorithm, which is faster 
	than the previous one~\cite{IwataOkiTakamatsu17,IwataTakamatsu13} 
	in terms of time complexity.
	One of motivating applications of mixed polynomial matrices is 
	the analysis of {\em differential algebraic equations (DAE)}; see \cite[Chapter 6]{MurotaBook}. 
	We present a possible application of our result 
	to the mixed-matrix DAE analysis.
\end{itemize}

The rest of this paper is organized as follows.
In Section~\ref{sec:skewfield}, 
we summarize basic facts on skew field,
nc-rank, Fortin-Rautenauer formula, MVSP, 
and Dieudonn\'e determinant.
In Section~\ref{sec:L-convex}, 
we introduce L-convex functions on Euclidean buildings and
show their basic properties.
Instead of dealing with Euclidean buildings in the usual axiom system, 
we utilize an elementary lattice-theoretic equivalent concept, 
called {\em uniform modular lattices}~\cite{HH18a}.
This class of modular lattices admits the L-convexity concept 
in a straightforward way.
In Section~\ref{sec:computing},
we establish a formula for the degree of the Dieudonn\'e determinant of a linear matrix 
and present an algorithm.
In Section~\ref{sec:rank-1}, we study 
linear matrices with rank-1 summands.

Closing the introduction, let us mention a recent work
by Kotta, Belikov, Hal\'as and Leibak~\cite{KottaBelikovHalasLeibak2017} 
in control theory. 
They showed that the order of the minimal state-space realization 
of a (linearized) nonlinear control system
is described by the degree of the Dieudonn\'e determinant 
of the non-commutative polynomial matrix associated with the system,  
where the matrix involves elements in 
the skew polynomial ring of non-commuting variables $t,\delta$.
This result generalizes the basic fact on the degrees of 
the (ordinary) determinants in linear time-invariant control systems; 
see e.g., \cite{Kailath80}.
They gave a primitive algorithm to compute $\deg \Det$ 
based on (symbolic) Gaussian elimination.
It would be an interesting research direction 
to extend our framework to deal with this type of deg-Det computation.

\section{Skew field}\label{sec:skewfield}

A {\em skew field} (or {\em division ring}) is a ring $\FF$ 
such that every nonzero element $x \in \FF$ 
has inverse element $x^{-1} \in \FF$ with $x^{-1}x = x x^{-1} = 1$.
The product $\FF^n$ of $\FF$ will be treated as 
a right $\FF$-vector space
of column vectors as well as a left $\FF$-vector space of row vectors; 
which one we suppose will be clear in the context.
The set of all $n \times n'$ matrices over $\FF$ is denoted by $\FF^{n \times n'}$.
The {\em rank} of matrix $A \in \FF^{n \times n'}$ is defined as the dimension of 
the right $\FF$-vector space spanned by columns of $A$, 
which is equal to the dimension of the left $\FF$-vector space spanned by rows of $A$.
Let $\ker_{\rm R} A$ denote 
the right kernel $\{x \in {\FF}^{n'} \mid Ax = {\bf 0}\}$, and 
let $\ker_{\rm L} A$ denote 
the left kernel $\{x \in {\FF}^n \mid x A = {\bf 0}\}$. 
Then the rank of $A \in \FF^{n \times n'}$ 
is equal to $n - \dim \ker_{\rm L} A = n' - \dim \ker_{\rm R} A$.
A square matrix $A \in \FF^{n \times n}$ is called {\em nonsingular} 
if its rank is equal to $n$, or equivalently 
if it has the inverse, which is denoted by $A^{-1}$, i.e., $A A^{-1} = A^{-1} A = I$.
These properties are easily seen from the Bruhat normal form of $A$;
see Lemma~\ref{lem:Bruhat} in Section~\ref{subsec:Det}.

An $n \times n'$ matrix $A \in \FF^{n \times n'}$ 
is viewed as a map $\FF^n \times \FF^{n'} \to \FF$ by
\begin{equation}\label{eqn:bilinear}
A(x,y) := x A y \quad (x \in \FF^n, y \in \FF^{n'}).
\end{equation}
Then $A$ is bilinear in the sense that $A(\alpha x +\alpha' x', y) 
= \alpha A(x, y) + \alpha' A(x', y)$ and $A(x, y\beta + y'\beta') = A(x,y)\beta +   A(x,y')\beta'$. Conversely, 
any bilinear map on the product of a left $\FF$-vector space $U$ and a right $\FF$-vector space $V$
is identified with a matrix over $\FF$ by choosing bases of $U$ and $V$.    
Let ${\cal S}_{\rm R}(\FF^n)$ and ${\cal S}_{\rm L}(\FF^n)$
denote the families of all right and left $\FF$-vector subspaces of $\FF^n$, respectively.
If $\FF$ is commutative, then $\rm R$ and $\rm L$ are omitted, such as ${\cal S}(\FF^n)$.

\subsection{Free skew field, nc-rank, and MVSP}
Let $\KK$ be a field.
Let $\KK[x_1,x_2,\ldots,x_m]$ and 
$\KK(x_1,x_2,\ldots,x_m)$ denote the ring of polynomials and 
the field of rational functions with variables $x_1,x_2,\ldots,x_m$, respectively, 
where variables are supposed to commute each other, i.e., $x_i x_j = x_j x_i$.
Let $\KK \langle x_1,x_2,\ldots,x_m \rangle$ be 
the free ring generated by pairwise non-commutative variables $x_1,x_2,\ldots,x_m$ over $\KK$.
It is known that the free ring $\KK \langle x_1,x_2,\ldots,x_m \rangle$
is embedded to the universal skew field of fractions, 
called the {\em free skew field}, which 
is denoted by $\KK (\langle x_1,x_2,\ldots,x_m \rangle)$, or simply, by $\KK(\langle x \rangle)$.
Elements of $\KK (\langle x_1,x_2,\ldots,x_m \rangle)$ 
are equivalence classes of all rational expressions
constructed from $x_i$, $x_i^{-1}$, and elements of $\KK$, 
under an equivalence relation 
obtained by substitutions of nonsingular matrices for variables $x_i$; see~\cite{Amitsur1966,CohnAlgebra3,CohnSkewField} for details.
We do not go into the detailed construction
of $\KK (\langle x_1,x_2,\ldots,x_m \rangle)$.

 As mentioned in the introduction,  a {\em linear symbolic matrix} or {\em linear matrix} 
on $\KK$ 
is a matrix of form $A = A_0 + A_1 x_1 + A_2 x_2 + \cdots + A_m x_m$, 
where each summand $A_i$ is a matrix over $\KK$.
A linear matrix is viewed as a matrix over $\KK(x_1,x_2,\ldots,x_m)$ 
as well as over $\KK(\langle x_1,x_2,\ldots,x_m \rangle)$.
%
The (commutative) rank of $A$ is defined as the rank of $A$ 
regarded as a matrix over $\KK(x_1,x_2,\ldots,x_m)$.
The {\em non-commutative rank} ({\em nc-rank}) of $A$, denoted by nc-$\rank A$, is defined 
as the rank of $A$ regarded as a matrix 
over $\KK(\langle x_1,x_2,\ldots,x_m \rangle)$.
The nc-rank is not less than the (commutative) rank.

Let $A = A_0 + A_1 x_1 + A_2 x_2 + \cdots + A_m x_m$ 
be a linear $n \times n'$ matrix.
We consider an upper bound of nc-$\rank A$. 
For nonsingular matrices $S \in \KK^{n \times n},T \in \KK^{n'  \times n'}$ (not containing variables), 
if $SAT$ has a zero submatrix of $r$ rows and $s$ columns, 
then nc-$\rank A$ is at most $n + n' - r - s$.
This gives rise to the following optimization problem (MVSP):
\begin{eqnarray*}
{\rm MVSP}: \quad {\rm Max.} && r + s  \\
{\rm s.t.} && \mbox{$SAT$ has a zero submatrix of $r$ rows and $s$ columns},\\
  && S \in \KK^{n \times n}, T \in \KK^{n' \times n'}: \mbox{nonsingular}.
\end{eqnarray*}
This upper bound was observed by Lov\'asz~\cite{Lovasz89} for the usual rank.
The name MVSP becomes clear below.
Fortin and Reutenauer~\cite{FortinReutenauer04} 
showed that this upper bound is actually tight for  the nc-rank.
\begin{Thm}[\cite{FortinReutenauer04}]\label{thm:FortinReutenauer}	
	For a linear $n \times n'$ matrix $A = A_0 + A_1 x_1 + A_2 x_2 + \cdots + A_m x_m$, 
	{\rm nc}-$\rank A$ is equal to $n+n'$ minus the optimal value of MVSP.
\end{Thm}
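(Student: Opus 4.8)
\emph{Proof plan.}
The inequality $\ncrank A \le n + n' - \mathrm{opt}(\mathrm{MVSP})$ is the easy half and holds over any skew field: the nc-rank is unchanged by left and right multiplication with constant nonsingular matrices, and a matrix carrying a zero submatrix with $r$ rows and $s$ columns has rank at most $(n-r)+(n'-s)$, because its rows outside the chosen $r$ span at most $n-r$ dimensions while the chosen $r$ rows, supported on only $n'-s$ columns, span at most $n'-s$. So the real content is the reverse inequality, i.e.\ the existence of constant nonsingular $S,T$ attaining the bound.

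I would first recast MVSP geometrically: a zero submatrix of $SAT$ indexed by a set of rows and a set of columns corresponds to a pair of $\KK$-subspaces $U\subseteq\KK^n$ (row vectors) and $V\subseteq\KK^{n'}$ (column vectors) with $u A_k v = 0$ for all $u\in U$, $v\in V$, $0\le k\le m$, the objective being $\dim U + \dim V$. For fixed $V$, putting $\mathcal A(V):=\mathrm{span}_\KK\{A_k v : v\in V,\ 0\le k\le m\}\subseteq\KK^n$, the best choice of $U$ is the annihilator of $\mathcal A(V)$, so $\mathrm{opt}(\mathrm{MVSP}) = n - \min_V(\dim\mathcal A(V)-\dim V)$ and the theorem becomes
\[
\ncrank A \;=\; \min_{V\subseteq\KK^{n'}}\bigl(\dim\mathcal A(V)+\dim V^{\perp}\bigr),
\]
whose ``$\le$'' direction is exactly the weak duality above (for each $V$ take $U$ the annihilator of $\mathcal A(V)$). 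Several cases are then routine. If $\mathcal A(\KK^{n'})\ne\KK^n$, a constant $S$ makes $SA$ have zero rows, which can be deleted without changing the nc-rank or the right-hand side, so we induct on $n+n'$; symmetrically if the rows of the $A_k$ fail to span $\KK^{n'}$; and if $\rho:=\ncrank A$ equals $n$ or $n'$ then $V=\KK^{n'}$ or $V=\{0\}$ is a minimizer. What remains is the essential case: $\rho<\min(n,n')$ with the pencil $\{A_k\}$ spanning on both sides, where the minimizer is necessarily proper and nontrivial and cannot be located by such bookkeeping — this is precisely where nc-rank departs from commutative rank.

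For the essential case I would invoke Cohn's structure theory of the free ring $R:=\KK\langle x_1,\ldots,x_m\rangle$ and its universal field of fractions $\FF=\KK(\langle x\rangle)$: since $R$ is a free ideal ring, hence a Sylvester domain, the nc-rank of $A$ equals the \emph{inner rank} of $A$ over $R$, i.e.\ the least $\rho$ admitting a factorization $A=PQ$ with $P\in R^{n\times\rho}$ and $Q\in R^{\rho\times n'}$. Fixing such a minimal factorization, the goal is to convert it into a \emph{constant} change of coordinates — equivalently, to prove that a non-full linear matrix over $R$ is, after left and right multiplication by matrices over $\KK$, ``hollow'', carrying a zero block of total size $n+n'-\rho$; these constant factors then furnish the desired $S,T$. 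I expect \textbf{this descent to be the main obstacle}: the entries of $P$ and $Q$ may have arbitrary degree in the $x_i$, and absorbing their non-constant parts into $\mathrm{GL}(\KK)$-factors requires Cohn's linearization by enlargement together with the trivialization of factorizations over free ideal rings (via the weak algorithm). A fallback route to the same reverse inequality goes through generic blow-ups — replace each $x_i$ by a generic $d\times d$ matrix of fresh indeterminates, so that $\ncrank A=\lim_d \tfrac1d\rank A^{(d)}$, show the limit is attained at some finite $d$, and transport a commutative-rank-optimal $S^{(d)},T^{(d)}$ back to constant $S,T$ — but this too rests on a nontrivial input (the Derksen--Makam regularity bounds).
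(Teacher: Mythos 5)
The paper cites this theorem from Fortin and Reutenauer~\cite{FortinReutenauer04} without proof, so there is no in-paper argument to compare against; I can only assess your plan on its own terms. The structural skeleton you lay out matches the standard route: the weak inequality is a bookkeeping exercise, the reduction to the essential case $\rho<\min(n,n')$ with the pencil spanning on both sides is routine (and your handling of it is correct), and the right tool is Cohn's theory of free ideal rings, via which $\ncrank A$ equals the inner rank of $A$ over $R=\KK\langle x_1,\ldots,x_m\rangle$, i.e.\ the least $\rho$ admitting $A=PQ$ with $P\in R^{n\times\rho}$, $Q\in R^{\rho\times n'}$.

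The genuine gap is exactly the step you flag and decline to carry out, and it is not a small one: it is where the entire content of the theorem lives. Knowing that $A$ factors through rank $\rho$ over $R$ does not by itself give a zero block cut out by \emph{constant} $S,T\in\mathrm{GL}(\KK)$; the factors $P,Q$ can have high degree in the $x_i$, and the image/coimage they determine are $R$-modules, not $\KK$-subspaces. What is needed is the linear sharpening of Cohn's hollow-matrix result: for a \emph{linear} $A$ over the fir $R$, a proper (inner-rank) factorization can be normalized — using the weak algorithm on $R$ to trivialize relations and absorbing units — so that the nonconstant parts of $P$ and $Q$ cancel and the zero block is supported on constant row and column spaces. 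You name the ingredients (linearization by enlargement, trivialization of factorizations) but give no argument, so the plan remains a sketch rather than a proof. The fallback via blow-ups and the Derksen--Makam bound has its own untreated obstruction: once $\rank A^{(d)}$ stabilizes, a commutative-rank-optimal pair $(S^{(d)},T^{(d)})$ acts on $\KK^{nd}$ and $\KK^{n'd}$, and there is no stated mechanism for descending it to $n\times n$ and $n'\times n'$ constant matrices over $\KK$ — that transport step is itself nontrivial and is what a blow-up proof would actually have to supply.
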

As mentioned in the introduction,
Garg, Gurvits, Oliveira, and Wigderson~\cite{GGOW15} showed that 
nc-$\rank A$ can be computed in polynomial time 
when $\KK = \QQ$.
One drawback of this algorithm is not to output optimal matrices $S,T$ in MVSP.  
Such an algorithm was developed by
Ivanyos, Qiao, and Subrahmanyam~\cite{IQS15a,IQS15b} for an arbitrary field. 
\begin{Thm}[\cite{IQS15a,IQS15b}]
	MVSP can be solved in polynomial time.
\end{Thm}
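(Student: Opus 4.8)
\emph{Proof idea.} By Theorem~\ref{thm:FortinReutenauer}, the optimal value of MVSP equals $n+n'-\ncrank A$, and an optimal pair $(S,T)$ is recovered from a subspace certifying the nc-rank by completing bases over $\KK$; so it suffices to give a deterministic polynomial-time algorithm that outputs $\ncrank A$ together with such a certificate. First I would reduce to the square, homogeneous case: pad $A$ with zero rows or columns to make it $n\times n$, and replace the constant term $A_0$ by $A_0 x_0$ for a fresh non-commutative variable $x_0$; this changes neither the nc-rank nor, up to bookkeeping, an optimal zero block. Now MVSP becomes the problem of computing, for the matrix space $\mathcal{A}=\langle A_0,A_1,\ldots,A_m\rangle\subseteq\KK^{n\times n}$, the quantity $c^\ast:=\max\{\dim W-\dim\mathcal{A}W : W\in\mathcal{S}(\KK^n)\}$, where $\mathcal{A}W:=\sum_{B\in\mathcal{A}}BW$, and outputting an attaining \emph{shrunk subspace} $W$: one has $\ncrank A=n-c^\ast$, and from such a $W$ one reads off nonsingular $S,T$ over $\KK$ for which $SAT$ has a zero block of $(n-\dim\mathcal{A}W)$ rows and $\dim W$ columns.

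The engine is an augmenting-type procedure. If $\mathcal{A}$ has no nonsingular element, a separate rank-reduction step applies (a small generic combination detects this, and one recurses on a suitable quotient); otherwise, after left-multiplying $\mathcal{A}$ by the inverse of one of its nonsingular members (which preserves the relevant data), assume $I\in\mathcal{A}$. The algorithm maintains a matrix $M$ of the current largest known rank in the $d$-fold \emph{blow-up} $\mathcal{A}^{\{d\}}:=\{\sum_i B_i\otimes Y_i : B_i\in\mathcal{A},\ Y_i\in\KK^{d\times d}\}$, with $d$ initially $1$, and tries to raise $\rank M$. For this I would use the \emph{second Wong sequence} of $M$ inside $\mathcal{A}^{\{d\}}$: starting from $\ker_{\rm R}M$, iterate the map $W\mapsto M^{-1}\big(\mathcal{A}^{\{d\}}W\big)$, pulled back through a fixed nonsingular completion of $M$; this strictly grows until it stabilizes after at most $dn$ steps. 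Either the stabilized subspace exhibits an element of $\mathcal{A}^{\{d\}}$ of strictly larger rank, in which case $M$ is updated and the loop repeats (at most $dn$ times for each $d$), or it descends to a shrunk subspace of $\mathcal{A}$ proving $\rank M/d=\ncrank A$, in which case we are done and extract $W$, $S$, $T$.

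The delicate point is termination with a polynomial bound, and this is exactly where the invariant-theoretic estimate of Derksen and Makam~\cite{DerksenMakam17} enters. With $d=1$ the improvement loop can get stuck at $\rank M<\ncrank A$, because the commutative rank of $\mathcal{A}$ may lie strictly below its nc-rank; when this happens one increments $d$ by one, lifts $M$ to $\mathcal{A}^{\{d+1\}}$, and restarts. Since $\rank$ of the blow-up divided by $d$ never exceeds $\ncrank A$ and, by~\cite{DerksenMakam17}, already equals it for some $d=O(n)$, the outer loop over $d$ runs $O(n)$ times; each Wong-sequence computation and each rank update is ordinary linear algebra over $\KK$ on matrices of size $O(n^2)$, so the whole procedure is polynomial. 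I expect the main obstacle to be precisely this last ingredient—keeping the blow-up dimension (equivalently, the ambient matrix size) polynomially bounded—together with the bookkeeping needed to pull a deficiency-$c^\ast$ shrunk subspace, and thence $S,T$ over $\KK$, back from $\mathcal{A}^{\{d\}}$ to $\mathcal{A}$. An alternative route, avoiding blow-ups, is to minimize the submodular function $W\mapsto\dim\mathcal{A}W-\dim W$ directly on the modular lattice $\mathcal{S}(\KK^n)$ as in the approach of Hamada and Hirai; there the difficulty shifts to implementing the minimization oracle and bounding the number of its calls.
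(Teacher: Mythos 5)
The paper does not prove this statement; it cites it as a result of Ivanyos, Qiao, and Subrahmanyam~\cite{IQS15a,IQS15b}. Your sketch is a faithful high-level summary of exactly their algorithm (homogenize and reduce to the shrunk-subspace formulation, augment rank via second Wong sequences, pass to blow-ups $\mathcal{A}^{\{d\}}$ when improvement stalls, and invoke the Derksen--Makam bound to keep $d$ polynomial), so it matches the approach the paper intends by citation.
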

Hamada and Hirai~\cite{HamadaHirai17} also gave a ``polynomial time" algorithm  
based on submodular optimization (see Lemma~\ref{lem:submo}), 
though the bit-length required in the algorithm is not bounded if $\KK = \QQ$.
Note that their formulation is slightly different from that presented here; 
see Section~\ref{subsec:HH} in Appendix for the relation.

As \cite{HamadaHirai17,HH16DM} did, 
MVSP is formulated as 
the following optimization problem 
({\em maximum vanishing subspace problem}) 
of the space of all vector subspaces of $\KK^n$: 
\begin{eqnarray}
	{\rm MVSP}: \quad {\rm Max.} && \dim X + \dim Y  \nonumber \\
	{\rm s.t.} && A_i (X,Y) = \{0\} \quad (i=0,1,2,\ldots,m),\nonumber \\
	&& X \in {\cal S}(\KK^n), Y \in {\cal S}(\KK^{n'}). \label{eqn:MVSP}
\end{eqnarray}
Recall the notation ${\cal S}(\KK^n)$ for all vector subspaces of $\KK^n$, and that
each matrix $A_i$ is viewed as a bilinear form $\KK^n \times \KK^{n'} \to \KK$ 
as in (\ref{eqn:bilinear}).
We call an optimal $(X,Y)$ a {\em maximum vanishing subspace} or an {\em mv-subspace}.  
Notice that we can eliminate variable $X$ by substituting 
$X = A(Y)^{\bot}$ ($=$ the orthogonal complement of the image of $Y$ by $A$), 
and obtain the formulation of~\cite{GGOW15,IQS15a,IQS15b} --- 
the {\em minimum shrunk subspace problem}.

The nc-$\rank A$ is also obtained via MVSP over $\KK(\langle x \rangle)$:
\begin{eqnarray*}
	\overline{\rm MVSP}: \quad {\rm Max.} && \dim X + \dim Y  \\
	{\rm s.t.} && A (X,Y) = \{0\},\\
	&&  X \in {\cal S}_{\rm L}(\KK(\langle x\rangle)^n), 
	Y \in {\cal S}_{\rm R}(\KK(\langle x\rangle)^{n'}).
\end{eqnarray*}
Indeed, $\overline{\rm MVSP}$ has obvious optimal solutions
$( \KK(\langle x\rangle)^n,  \ker_{\rm R} A)$ and $(\ker_{\rm L} A, \KK(\langle x\rangle)^{n'})$.
Notice that $\KK(\langle x \rangle)^n$ is  a scalar extension 
of $\KK^n$, i.e.,
$\KK(\langle x \rangle)^n \simeq \KK(\langle x \rangle) \otimes \KK^n$.
Therefore a feasible solution in MVSP 
is embedded into a feasible solution in
$\overline{\rm MVSP}$ by $(X,Y) \mapsto (\KK(\langle x \rangle) 
\otimes X, Y\otimes \KK(\langle x \rangle))$.
Then Theorem~\ref{thm:FortinReutenauer} also says that 
MVSP is an exact inner approximation of $\overline{\rm MVSP}$:
\begin{Lem}\label{lem:innerapprox}
	Any mv-subspace of MVSP is also an mv-subspace of $\overline{\mbox{MVSP}}$.
\end{Lem}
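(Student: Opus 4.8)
The plan is to read this off from the Fortin--Reutenauer formula together with the two observations already recorded above: that $(\KK(\langle x\rangle)^n,\ker_{\rm R}A)$ is optimal for $\overline{\rm MVSP}$, and that the assignment $(X,Y)\mapsto(\KK(\langle x\rangle)\otimes X,\ Y\otimes\KK(\langle x\rangle))$ sends a feasible solution of MVSP to a feasible solution of $\overline{\rm MVSP}$. Let $(X,Y)$ be an mv-subspace of MVSP. By Theorem~\ref{thm:FortinReutenauer} its value is $\dim X+\dim Y=n+n'-\ncrank A$. Under the above embedding it becomes a feasible solution of $\overline{\rm MVSP}$, and since scalar extension does not change the dimension of a vector space, the value of this extended pair is again $n+n'-\ncrank A$. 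On the other hand the optimal value of $\overline{\rm MVSP}$ equals $n+n'-\ncrank A$ as well, since $(\KK(\langle x\rangle)^n,\ker_{\rm R}A)$ has value $n+\dim\ker_{\rm R}A=n+n'-\ncrank A$ and is optimal. Therefore the extended pair attains the optimum of $\overline{\rm MVSP}$; identifying $(X,Y)$ with its extension, $(X,Y)$ is an mv-subspace of $\overline{\rm MVSP}$.

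For completeness one should spell out the two steps used as black boxes. First, feasibility of the extension: for $x\in X$ and $y\in Y$ we have $A(x,y)=\sum_{i=0}^m A_i(x,y)x_i=0$ since $(X,Y)$ satisfies the constraints of (\ref{eqn:MVSP}), and extending bilinearly (left-linearly in the first argument, right-linearly in the second) gives $A(\KK(\langle x\rangle)\otimes X,\ Y\otimes\KK(\langle x\rangle))=\{0\}$. Second, the optimality of $(\KK(\langle x\rangle)^n,\ker_{\rm R}A)$, equivalently the upper bound $\dim U+\dim V\le n+n'-\ncrank A$ for every feasible $(U,V)$ of $\overline{\rm MVSP}$: writing $U$ as the span of the rows of a $(\dim U)\times n$ matrix $S$ with $\rank S=\dim U$, the constraint $A(U,V)=\{0\}$ forces $V\subseteq\ker_{\rm R}(SA)$, so $\dim V\le n'-\rank(SA)$, and $\rank(SA)\ge\rank S+\rank A-n=\dim U+\ncrank A-n$ by the Sylvester rank inequality over the skew field $\KK(\langle x\rangle)$ (which, like the equality of row and column rank, follows from the Bruhat normal form, Lemma~\ref{lem:Bruhat}); combining the two inequalities yields the bound.

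I do not expect a genuine obstacle here: the whole content is that the exact upper bound on $\ncrank A$ furnished by $\KK$-subspaces in MVSP cannot be sharpened by allowing $\KK(\langle x\rangle)$-subspaces, which is precisely the force of Theorem~\ref{thm:FortinReutenauer}. The only place calling for a little care is to make sure the skew-field rank arithmetic (row rank $=$ column rank, Sylvester's inequality) is what gets invoked, rather than its commutative counterpart.
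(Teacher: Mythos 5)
Your proposal is correct and matches the paper's (very terse) reasoning: the paper simply asserts that $(\KK(\langle x\rangle)^n,\ker_{\rm R}A)$ is an ``obvious'' optimal solution of $\overline{\rm MVSP}$ with value $n+n'-\ncrank A$, and that Theorem~\ref{thm:FortinReutenauer} then forces the scalar extension of any MVSP-optimum to be optimal for $\overline{\rm MVSP}$; you have correctly filled in the details of the upper-bound step via Sylvester's rank inequality over the skew field and checked feasibility of the extension. The only cosmetic remark is that Sylvester's inequality needs nothing more than dimension theory for (left/right) vector spaces over a skew field, rather than the Bruhat form as such.
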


\subsection{Dieudonn\'e determinant}\label{subsec:Det}
Here we introduce a determinant concept for matrices over skew field $\FF$, known as {\em Dieudonn\'e determinant}~\cite{Dieudonne1943}.
Our reference of Dieudonn\'e determinant is \cite[Section 11.2]{CohnAlgebra3}.
Our starting point is 
the following normal form of matrices over skew field $\FF$. 
\begin{Lem}[{Bruhat normal form; 
		see \cite[THEOREM 2.2 in Section 11.2]{CohnAlgebra3}}]\label{lem:Bruhat}
Any matrix $A$ over $\FF$ is represented as
\begin{equation}\label{eqn:Bruhat}
A = L D P U,
\end{equation}
for a diagonal matrix $D$, a permutation matrix $P$, 
a lower-unitriangular matrix $L$, 
and an upper-unitriangular matrix $U$, 
where $D P$ is uniquely determined.
\end{Lem}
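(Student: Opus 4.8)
The plan is to prove existence of the decomposition $A = LDPU$ by a Gaussian-elimination argument adapted to the non-commutative setting, and then to establish uniqueness of the product $DP$ by a double-counting / support argument on the nonzero pattern.

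For existence, I would argue by induction on the size of $A$, or more concretely by processing columns from left to right. Since $\FF$ is a skew field, every nonzero element is invertible, so the usual pivoting operations are available: to clear entries below and to the left of a chosen pivot I multiply on the left by suitable elementary (transvection) matrices, which are lower-unitriangular after a permutation is fixed; to clear entries above and to the right I multiply on the right by elementary matrices, which are upper-unitriangular. The one point requiring care is the \emph{order} in which row and column operations are applied, because left and right multiplications do not commute: one must fix, for each step, the position of the leading nonzero entry in the current column (this choice is what produces the permutation matrix $P$), then use \emph{only} left operations to clear the rest of that column and \emph{only} right operations to clear the corresponding row, so that the accumulated left factor stays lower-unitriangular up to $P$ and the right factor stays upper-unitriangular. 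Collecting the pivots into the diagonal matrix $D$ and the recorded pivot positions into the permutation matrix $P$, and gathering the accumulated elementary factors into $L$ and $U$, yields $A = LDPU$. Zero columns or a rank-deficient $A$ are handled by allowing zero diagonal entries in $D$; the argument goes through verbatim.

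For uniqueness of $DP$, suppose $L_1 D_1 P_1 U_1 = L_2 D_2 P_2 U_2$. Rearranging gives $L' D_1 P_1 U' = D_2 P_2$ for a lower-unitriangular $L'$ and upper-unitriangular $U'$ (absorbing inverses). The key observation is that for a monomial matrix $DP$ the positions of its nonzero entries, read together with the unitriangularity constraints, are rigid: comparing, for each $j$, the topmost nonzero entry in column $j$ on both sides and using that left-multiplication by a lower-unitriangular matrix cannot move a nonzero entry strictly upward within its column (and symmetrically that right-multiplication by an upper-unitriangular matrix cannot move it strictly leftward within its row) forces $P_1 = P_2$; then matching the actual pivot values forces $D_1 = D_2$. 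This is the standard Bruhat-cell-disjointness argument; the non-commutativity does not interfere because it only concerns the \emph{supports} of the matrices, not the field arithmetic.

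The main obstacle is bookkeeping the order of elementary operations in the existence step so that the left and right accumulated factors genuinely land in the lower- and upper-unitriangular groups (relative to $P$) rather than in some mixed form; once the pivoting schedule is pinned down, both existence and the support-based uniqueness argument are routine. Since the statement is quoted from \cite[Theorem~2.2, Section~11.2]{CohnAlgebra3}, one may alternatively simply cite it, but the elimination proof above is the natural self-contained route.
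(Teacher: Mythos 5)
The paper does not itself prove Lemma~\ref{lem:Bruhat}: it quotes it from \cite[Theorem~2.2 in Section~11.2]{CohnAlgebra3} and adds only the remark that ``the proof is done by the Gaussian elimination, and is essentially the LU-decomposition.''  Your Gaussian-elimination plan for the existence half is exactly what that remark points at, and it is sound: over a skew field every nonzero pivot is invertible, so processing rows (or columns) in order, recording pivot positions in $P$ and pivot values in $D$, and clearing with left (lower-unitriangular) and right (upper-unitriangular) elementary operations yields $A = LDPU$, with zero diagonal entries of $D$ absorbing rank deficiency.

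Your uniqueness sketch, however, has a genuine gap.  After reducing to $L'(D_1P_1)U' = D_2P_2$, the observation that left multiplication by a lower-unitriangular $L'$ cannot raise the topmost nonzero of a \emph{column} is correct, but it pins down the columns of $L'(D_1P_1U')$ in terms of those of $D_1P_1U'$, not of $D_1P_1$: the right factor $U'$ mixes column $j$ with earlier columns, and the topmost nonzero of column $j$ of $D_1P_1U'$ can sit strictly above that of $D_1P_1$ whenever $U'$ has nonzero superdiagonal entries.  Ruling this out requires using that the result is again a scaled partial permutation, and the clean way to close the circle is to observe that the ranks of all upper-left $i\times j$ submatrices are invariant under left multiplication by lower-unitriangular and right multiplication by upper-unitriangular matrices (this works verbatim over $\FF$ since ranks over a skew field are well defined), and that these ranks determine the support of a scaled partial permutation.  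Even then, ``matching pivot values'' to conclude $D_1 = D_2$ needs its own short computation: the cross-terms $L'_{ak}(D_1P_1)_{kl}U'_{lb}$ with $k<a$, $l<b$ contribute to $(L'D_1P_1U')_{ab}$ and must be shown to vanish, which follows from examining the entries of $D_2P_2$ at positions $(a,l)$ and $(k,b)$ that are forced to be zero.  The support-rigidity idea is the right one, but the ``topmost nonzero'' step as written is not yet a proof.
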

Here a {\em lower(upper)-unitriangular matrix} is a lower(upper)-triangular matrix such that all diagonals are $1$.
The proof is done by the Gaussian elimination, and is 
essentially  the LU-decomposition (without pivoting).

Let $\FF^{\times} := \FF \setminus \{0\}$ denote the multiplicative group of $\FF$, and let $[\FF^{\times}, \FF^{\times}]$ denote the derived group of $\FF^{\times}$, i.e.,
$[\FF^{\times}, \FF^{\times}]$ is the group generated by all commutators $aba^{-1}b^{-1}$.
The {\em abelianization} $\FF^{\times}_{\rm ab}$ of $\FF^{\times}$ is defined 
by $\FF^{\times}_{\rm ab} := \FF^{\times} / [\FF^{\times}, \FF^{\times}]$.
For a nonsingular matrix $A$, 
the {\em Dieudonn\'e determinant} ${\rm Det} A$ of $A$ is defined by
\begin{equation}
{\rm Det} A :=   {\rm sgn} (P) d_1 d_2 \cdots d_n \mod [\FF^{\times}, \FF^{\times}], 
\end{equation}
where $A$ is represented as (\ref{eqn:Bruhat}) 
for permutation matrix $P$ and diagonal matrix $D$ with nonzero diagonals  $d_1,d_2,\ldots,d_n$.
If $\FF$ is a field, 
then $[\FF^\times,\FF^\times] = \{1\}$,  $\FF^{\times}_{\rm ab} = \FF^{\times}$, and $\Det = \det$. 
\begin{Ex}\label{ex:2x2}
	Consider the case of a $2$ by $2$ matrix
	$\left(
	\begin{array}{cc}
		a & b \\
		c & d 
	\end{array}
	\right)$.
	If $a \neq 0$, then the Bruhat normal form is given by
	\[
	\left(
	\begin{array}{cc}
	a & b \\
	c & d 
	\end{array}
	\right) = 	\left(
	\begin{array}{cc}
	1 & 0 \\
	ca^{-1} & 1 
	\end{array}
	\right) \left( \begin{array}{cc}
	a & 0 \\
	0 & d - c a^{-1}b
	\end{array}
	\right)\left(
	\begin{array}{cc}
	1 & a^{-1}b \\
	0 & 1 
	\end{array}\right).
	\]
	Also, if $a = 0$ and $b \neq 0$, then
	\[
	\left(
	\begin{array}{cc}
	a & b \\
	c & d 
	\end{array}
	\right) = 	\left(
	\begin{array}{cc}
	1 & 0 \\
	d b^{-1} & 1 
	\end{array}
	\right) \left( \begin{array}{cc}
	b & 0 \\
	0 & c
	\end{array}
	\right)\left(
	\begin{array}{cc}
	0 & 1 \\
	1 & 0 
	\end{array}\right)
	\left(
	\begin{array}{cc}
	1 & 0 \\
	0 & 1 
	\end{array}\right).
	\]
	Hence the Dieudonn\'e determinant is given by  
	\begin{equation}
	\Det 
	\left(
	\begin{array}{cc}
	a & b \\
	c & d 
	\end{array}
	\right)
	= \left\{ \begin{array}{ccc}
	a(d -  c a^{-1}b) & \mbox{mod $[\FF^{\times}, \FF^{\times}]$} & {\rm if}\ a \neq 0, \\
	- bc & \mbox{mod $[\FF^{\times}, \FF^{\times}]$} & {\rm if}\ a = 0.
	\end{array}
	\right. 
	\end{equation}
\end{Ex}
The Dieudonn\'e determinant has the desirable properties
of the determinant, though its value is no longer an element of $\FF$.
\begin{Lem}[{\cite{Dieudonne1943}; see \cite[Theorem 2.6 in Section 11.2]{CohnAlgebra3}}]\label{lem:DetAB}
	For nonsingular matrices $A,B \in \FF^{n \times n}$, 
	it holds $\Det A B = \Det A \Det B$.
\end{Lem}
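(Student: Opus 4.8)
The plan is to reduce everything to the behavior of elementary row/column operations under the Dieudonn\'e determinant, using the Bruhat normal form (Lemma~\ref{lem:Bruhat}) as the bridge. First I would verify that $\Det$ is well-defined, i.e., independent of the choice of Bruhat decomposition: since Lemma~\ref{lem:Bruhat} asserts that $DP$ is uniquely determined by $A$, the diagonal $d_1 d_2 \cdots d_n$ is determined up to the ambiguity of writing a fixed $DP$ as $D'P'$ with $P'$ a permutation matrix and $D'$ diagonal; one checks that $P' = P$ forced and $D' = D$, so $\mathrm{sgn}(P) d_1 \cdots d_n \bmod [\FF^\times,\FF^\times]$ is genuinely a function of $A$. (The reduction to $\FF^\times_{\rm ab}$ is exactly what kills the non-commutative reordering issues that would otherwise appear.)

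Next I would establish the multiplicativity on a generating set. Every nonsingular matrix is, by Bruhat normal form, a product of a lower-unitriangular matrix, a diagonal matrix, a permutation matrix, and an upper-unitriangular matrix; and each of those is in turn a product of elementary matrices: $\mathrm{diag}(d_1,\dots,d_n)$, transpositions $P_{ij}$, and transvections $I + a E_{ij}$ ($i \neq j$). So it suffices to show two things: (a) $\Det(EA) = \Det(E)\Det(A)$ whenever $E$ is one of these elementary generators, where $\Det(E)$ is computed directly ($\Det(I + aE_{ij}) = 1$ since it is already unitriangular; $\Det(P_{ij}) = -1$; $\Det(\mathrm{diag}(d_i)) = d_1\cdots d_n \bmod [\FF^\times,\FF^\times]$); and (b) these values of $\Det$ on elementary matrices are themselves multiplicative among elementary matrices, which then propagates to a full proof by induction on the number of elementary factors in a Bruhat decomposition of $B$, writing $\Det(AB) = \Det(AE_1\cdots E_k) = \Det(AE_1\cdots E_{k-1})\Det(E_k) = \cdots = \Det(A)\Det(E_1)\cdots\Det(E_k) = \Det(A)\Det(B)$.

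The technical heart is step (a): computing how left-multiplication by an elementary matrix changes the Bruhat normal form. Left-multiplying by $\mathrm{diag}(d_i)$ rescales rows, which in the $LDPU$ form rescales $L$'s rows (conjugating back to keep it unitriangular introduces only commutators, hence is invisible mod $[\FF^\times,\FF^\times]$) and multiplies $D$ on the left by $\mathrm{diag}(d_i)$; left-multiplying by a permutation $P_{ij}$ permutes rows and is absorbed into $P$ with the expected sign change, again up to commutator ambiguity in $L$; left-multiplying by a transvection $I + aE_{ij}$ either is absorbed into $L$ outright or, when it interacts with the permutation, produces a change that a short computation (essentially the $2\times 2$ case worked out in Example~\ref{ex:2x2}) shows leaves $DP$ unchanged. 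I expect the bookkeeping for the transvection case — tracking which transvections get "moved past" $P$ and verifying the diagonal is untouched modulo $[\FF^\times,\FF^\times]$ — to be the main obstacle; it is exactly here that one must be careful that all the stray factors picked up are commutators. This is, however, a standard argument (it is the content of \cite[Section 11.2]{CohnAlgebra3}), so I would present it via the elementary-matrix reduction above and relegate the transvection computation to the $2\times 2$ model case already displayed in Example~\ref{ex:2x2}, invoked blockwise.
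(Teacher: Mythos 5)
The paper does not prove this lemma; it cites \cite{Dieudonne1943} and \cite[Theorem 2.6, Section 11.2]{CohnAlgebra3} and uses the result as a black box, so there is no in-paper proof to compare against. Assessed on its own terms, your strategy --- write $B$ as a product of elementary matrices via the Bruhat form, prove $\Det(CE) = \Det(C)\Det(E)$ for each elementary generator $E$, and induct --- is the right one, and it is the route Cohn takes. The problem is in the execution at exactly the step you flag as the technical heart.

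You claim that when a transvection acts across the permutation in the Bruhat form, a short computation shows that $DP$ is unchanged. This is false. Take $A = \left(\begin{smallmatrix} 0 & 1 \\ 1 & 0 \end{smallmatrix}\right)$: by Example~\ref{ex:2x2} its Bruhat $DP$ is the transposition matrix with $D = I$. Left-multiplying by the transvection $I + E_{12}$ gives $\left(\begin{smallmatrix} 1 & 1 \\ 1 & 0 \end{smallmatrix}\right)$, whose Bruhat $DP$ is $\mathrm{diag}(1,-1)$ with trivial permutation. The factor $DP$ changed completely; what is preserved is only the scalar $\mathrm{sgn}(P)\,d_1 \cdots d_n$ modulo $[\FF^\times, \FF^\times]$. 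Showing that this scalar is preserved (or, in general, multiplied by $\Det E$) is exactly what must be proved --- it is not a consequence of $DP$-invariance, which fails. The same defect affects your treatment of left-multiplication by a transposition: $P_{ij} L P_{ij}^{-1}$ is generically not lower-unitriangular, so ``absorbed into $P$ up to commutator ambiguity in $L$'' describes a mechanism that does not exist; a genuine recomputation of the Bruhat form is required, and in it the permutation and diagonal both move. Finally, you explicitly defer that recomputation to Cohn's Section~11.2, which is the very reference the lemma already cites, and that recomputation is the entire content of the lemma: the need for the abelianization by $[\FF^\times,\FF^\times]$ enters precisely there. So the proposal is a correct outline of the standard approach, but the argument at its crux is both incorrectly stated (the $DP$-invariance claim) and not carried out.
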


Next 
we introduce the polynomial ring $\mathbb{F}[t]$ 
and its skew field $\mathbb{F}(t)$ of fractions 
(or the {\em Ore quotient ring} of $\FF[t]$), where
the indeterminate $t$ commutes every element in~$\mathbb{F}$.
Here $\mathbb{F}[t]$ consists of polynomials 
$\sum_{i=0}^k a_i t^i$, where $k \geq 0$ and $a_i \in \FF$. 
By using commuting rule $x t = t x$, 
the addition and multiplication in $\FF[t]$ are naturally defined.
The resulting ring $\mathbb{F}[t]$ is called a polynomial ring 
over $\mathbb{F}$ with indeterminate $t$.  In the notation of~\cite{CohnAlgebra3,CohnSkewField,NoetherianBook},   
$\mathbb{F}[t]$ is the skew polynomial ring $\FF[t; 1,0]$.
The {\em degree} $\deg p$ of polynomial $p = \sum_{i=0}^k a_i t^i$ with $a_k \neq 0$
is defined by $\deg p := k$.

The polynomial ring $\mathbb{F}[t]$ is an  {\em Ore domain}, i.e., 
any two nonzero polynomials $p,q \in \FF[t]$ 
admit a common multiple $pu = q v$ for some nonzero $u,v \in \mathbb{F}[t]$.
See \cite[Section 9.1]{CohnAlgebra3} and 
\cite[Chapter 6]{NoetherianBook} for the details of Ore domains 
and their fields of fractions. 
This enables us to introduce addition and multiplication on
the set $\mathbb{F}(t)$ of all fractions $p/q$ for $p \in \mathbb{F}[t], q \in \mathbb{F}[t] \setminus \{0\}$.
Here $p/q$ is the equivalence class of $(p,q) \in \mathbb{F}[t] \times  \mathbb{F}[t] \setminus \{0\}$ 
under the equivalence relation: $(p,q) \sim (p',q')$ $\Leftrightarrow$ 
$(pu,qu) = (p'v,q'v)$ for some nonzero $u,v \in \mathbb{F}[t]$.
Addition $p/q + p'/q'$ is defined as $(pu + p'v)/qu$ by choosing 
$u,v \in \mathbb{F}[t]$ with $qu= q'v$. 
Multiplication $(p/q)(p'/q')$ is defined as $pu/q'v$ by 
choosing $u,v \in \mathbb{F}[t]$ with $qu = p'v$.
They are well-defined.
The inverse of nonzero element $p/q$ (i.e., $p \neq 0$) is given by $q/p$.
In this way, $\mathbb{F}(t)$ becomes a skew field 
into which $\mathbb{F}[t]$ is embedded by $p \mapsto p/1$.
Element $1/t^k$, which is denoted by $t^{-k}$, commutes each element of $\mathbb{F}(t)$.

The degree $\deg p/q$ of $p/q$ is defined as $\deg p - \deg q$.
As was observed by~Taelman~\cite{Taelman06}, 
the degree of the Dieudonn\'e determinant is well-defined, 
since the degree is zero on commutators.
\begin{Ex}\label{ex:2x2-deg} We see from Example~\ref{ex:2x2} that
	the degree of the determinant of a $2 \times 2$ matrix over $\mathbb{F}(t)$ is similar to the commutative case:
	\begin{equation*}
	\deg \Det 
	\left(
	\begin{array}{cc}
	a & b \\
	c & d 
	\end{array}
	\right) \leq \max\{\deg(a) + \deg(d), \deg(b) + \deg(c) \}.
	\end{equation*}
	The equality holds if $\deg(a) + \deg(d) \neq \deg(b) + \deg(c)$.
\end{Ex}
We let $\deg \Det A := - \infty$ if $A$ is singular.
From the definition and Lemma~\ref{lem:DetAB}, we have:
\begin{Lem}[see \cite{Taelman06}]\label{lem:DegAB}
	For $A,B \in \mathbb{F}(t)^{n \times n}$, 
	it holds $\deg \Det AB = \deg \Det A + \deg \Det B$.
\end{Lem}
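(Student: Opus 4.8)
The plan is to reduce the identity to the multiplicativity of the Dieudonn\'e determinant over the skew field $\FF(t)$ (Lemma~\ref{lem:DetAB}) together with the fact that $\deg$ is a group homomorphism from $\FF(t)^\times$ to the additive group $\ZZ$.

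First I would dispose of the singular case. If $A$ or $B$ is singular, then so is $AB$: a nonzero left-kernel vector of $A$ lies in the left kernel of $AB$, and if $B$ is singular then $\rank AB \le \rank B < n$; conversely, if both $A$ and $B$ are nonsingular, then $B^{-1}A^{-1}$ is a two-sided inverse of $AB$. Hence, under the convention $\deg \Det = -\infty$ for singular matrices, both sides of the claimed equality equal $-\infty$ precisely when $A$ or $B$ is singular, and it remains to treat the case where $A,B$ (and therefore $AB$) are nonsingular.

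Assume $A,B$ nonsingular. By Lemma~\ref{lem:DetAB} we have $\Det AB = \Det A\,\Det B$ as elements of $\FF(t)^{\times}_{\rm ab} = \FF(t)^{\times}/[\FF(t)^{\times},\FF(t)^{\times}]$, so it suffices to show that $\deg$ descends to a well-defined homomorphism $\FF(t)^{\times}_{\rm ab} \to (\ZZ,+)$; then $\deg \Det AB = \deg(\Det A\,\Det B) = \deg \Det A + \deg \Det B$. Since $\ZZ$ is abelian, any homomorphism $\FF(t)^{\times}\to\ZZ$ automatically annihilates every commutator $aba^{-1}b^{-1}$, so both the well-definedness on the abelianization and the vanishing on commutators (as noted by Taelman) follow once we know that $\deg(pq)=\deg p + \deg q$ for all nonzero $p,q\in\FF(t)$.

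Finally I would verify this multiplicativity of $\deg$ on $\FF(t)^{\times}$. On $\FF[t]$ it is immediate: writing $p=\sum_{i\le k}a_it^i$ with $a_k\neq 0$ and $q=\sum_{j\le l}b_jt^j$ with $b_l\neq 0$, the top term of $pq$ is $a_kb_l\,t^{k+l}$ because $t$ is central, and $a_kb_l\neq 0$ since $\FF$ is a division ring; hence $\deg pq = k+l = \deg p + \deg q$. Passing to fractions, one uses the Ore condition and the definition $(p/q)(p'/q')=pu/(q'v)$ with $qu=p'v$ to compute $\deg(pu)-\deg(q'v)=(\deg p - \deg q)+(\deg p' - \deg q')$; this follows from the polynomial case applied to the relation $qu=p'v$, and the same relation shows that $\deg(p/q):=\deg p - \deg q$ does not depend on the chosen representative. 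The only mildly delicate point is this bookkeeping with Ore fractions in the noncommutative setting, but every identity needed is a degree equality between honest polynomials, so no genuine obstacle arises.
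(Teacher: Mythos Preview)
Your proof is correct and follows essentially the same approach as the paper. The paper does not give an explicit proof but simply states that the lemma follows ``from the definition and Lemma~\ref{lem:DetAB}''; your argument spells out exactly this, namely that $\deg$ is a group homomorphism $\FF(t)^\times \to \ZZ$ (hence vanishes on commutators and descends to the abelianization), combined with the multiplicativity $\Det AB = \Det A\,\Det B$ and the singular-case convention.
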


By using the Dieudonn\'e determinant, we can 
formulate the {\em Smith-McMillan form} of a matrix over $\mathbb{F}(t)$; 
see~\cite[Section 5.1.2]{MurotaMatrix} for the commutative case.
An element $p/q \in \mathbb{F}(t)$ is said to be {\em proper} if $\deg(p/q) \leq 0$.
Let $\mathbb{F}(t)^-$ denote the ring of proper elements of $\mathbb{F}(t)$. 
A matrix over $\mathbb{F}(t)^-$ is also called proper. 
A proper matrix is called {\em biproper} if it is nonsingular 
and its inverse is also proper.
For integer vector $\alpha \in \ZZ^n$, let $(t^{\alpha})$ 
denote the diagonal $n \times n$ matrix such that the $(i,i)$-entry is $t^{\alpha_i}$ for $i=1,2,\ldots,n$. 
\begin{Prop}[Smith-McMillan form]\label{prop:SmithMcMillan}
For a nonsingular matrix $A \in \mathbb{F}(t)^{n \times n}$, 
there are biproper matrices $S,T$ and integer vector $\alpha \in \ZZ^n$ with $\alpha_1 \geq \alpha_{2} \geq \cdots \geq \alpha_n$ such that
\begin{equation*}
S A T = (t^{\alpha}).
\end{equation*} 
The integers $\alpha_k$ are uniquely determined by
\begin{equation*}
\alpha_k = \delta_k - \delta_{k-1} \quad (k = 1,2,\ldots,n),
\end{equation*}
where $\delta_k$ is the maximum degree of the Dieudonn\'e determinants 
of $k \times k$ submatrices of $A$, and let $\delta_0 := 0$.
\end{Prop}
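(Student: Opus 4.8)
The plan is to reduce to a matrix over the valuation ring $\FF(t)^-$, obtain the factorization from the Smith normal form over that ring, and then identify the diagonal exponents with the differences $\delta_k-\delta_{k-1}$ by showing that the quantity $\delta_k(\cdot)$ --- the maximum degree of a Dieudonn\'e determinant over all $k\times k$ submatrices --- is invariant under multiplication by biproper matrices. Throughout write $R:=\FF(t)^-$, $\pi:=t^{-1}$, and $A[I,J]$ for the submatrix of $A$ on row set $I$ and column set $J$. Since $\deg$ is a discrete valuation on $\FF(t)$ with central uniformizer $\pi$, the ring $R$ is local, a (left and right) principal ideal domain whose only one-sided ideals are the powers $\pi^j R$ ($j\ge 0$), every nonzero element of $R$ equals a unit (a degree-$0$ element) times a power of $\pi$, and ``biproper'' means exactly ``lies in $GL_n(R)$''. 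Given nonsingular $A\in\FF(t)^{n\times n}$, pick $N$ with $N\ge \deg A_{ij}$ for all $i,j$; then $A'':=t^{-N}A$ lies in $R^{n\times n}$, is nonsingular, satisfies $\deg\Det A[I,J]=kN+\deg\Det A''[I,J]$ by Lemma~\ref{lem:DegAB} (hence $\delta_k(A)=kN+\delta_k(A'')$), and $SA''T=(t^{\alpha''})$ forces $SAT=(t^{\alpha''+N\mathbf 1})$ because $t^N$ is central. So it suffices to prove everything for $A\in R^{n\times n}$.

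For existence when $A\in R^{n\times n}$, run the classical diagonal-reduction algorithm, which works because $R$ is a non-commutative discrete valuation ring: among the nonzero entries choose one of maximal degree, bring it to the $(1,1)$ position by permutations; it then left- and right-divides every entry of its row and column (the quotients lie in $R$ by the valuation inequality), so transvections over $R$ clear the remaining first-row and then first-column entries; recurse on the trailing $(n-1)\times(n-1)$ block, which stays nonsingular over $\FF(t)$ and hence is never zero. This yields $S,T\in GL_n(R)$ with $SAT$ equal to a diagonal matrix $(t^\beta)$, $\beta\in\ZZ^n$; a final permutation of rows sorts the exponents weakly decreasingly, giving $\alpha$ with $SAT=(t^\alpha)$. (Alternatively, invoke the structure theorem for finitely generated modules over a non-commutative principal ideal domain, applied to $R^n/AR^n$.) The same algorithm applied to a matrix in $GL_n(R)$ reduces it to the identity using only transvections over $R$, permutations, and diagonal matrices with unit entries; hence $GL_n(R)$ is generated by these, a fact used below.

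It remains to prove $\alpha_k=\delta_k-\delta_{k-1}$; this gives uniqueness of $\alpha$ as well, and shows each $\delta_k$ is finite (some $k\times k$ submatrix of the rank-$n$ matrix $A$ is nonsingular). For the diagonal matrix $(t^\alpha)$, a $k\times k$ submatrix on rows $I$, columns $J$ is nonsingular only if $I=J$, in which case $\deg\Det = \sum_{i\in I}\alpha_i$; since $\alpha$ is weakly decreasing, $\delta_k((t^\alpha))=\alpha_1+\cdots+\alpha_k$. So it suffices to prove $\delta_k(SAT)=\delta_k(A)$ for biproper $S,T$. By the generation statement and the symmetry between rows and columns, this reduces to $\delta_k(EA)=\delta_k(A)$ for a transvection $E=I+cE_{pq}$ with $c\in R$, $p\ne q$, where $E_{pq}$ is the matrix unit; and since $E^{-1}=I-cE_{pq}$ is of the same form, it is enough to show $\deg\Det(EA)[I,J]\le\delta_k(A)$ for every $k$-set pair $I,J$. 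If $p\notin I$ this submatrix equals $A[I,J]$; if $\{p,q\}\subseteq I$ it is obtained from $A[I,J]$ by an elementary row operation over $\FF(t)$, which preserves $\Det$ by Lemma~\ref{lem:DetAB}. The only genuinely non-commutative case is $p\in I$, $q\notin I$, where $(EA)[I,J]$ is $A[I,J]$ with its $p$-th row replaced by $A[p,J]+c\,A[q,J]$. Here I use a degree-level substitute for multilinearity: fix the other $k-1$ rows of $A[I,J]$, which have rank $k-1$ as $A[I,J]$ is nonsingular, and choose columns $J'\subseteq J$ with $J\setminus J'=\{j_0\}$ on which they form a nonsingular block $B$; the block-LU identity $\Det\!\begin{pmatrix}B & w\\ u & d\end{pmatrix}\equiv(\Det B)(d-uB^{-1}w)$, valid over any skew field, then gives, for \emph{every} row vector $r$ indexed by $J$,
\[
\Det\bigl(A[I,J]\text{ with $p$-th row replaced by }r\bigr)\ \equiv\ \pm\,(\Det B)\,\ell(r)\pmod{[\FF(t)^\times,\FF(t)^\times]},
\]
where $\ell(r):=r_{j_0}-r_{J'}B^{-1}(A[I\setminus p,\,j_0])$ is \emph{left-linear}, i.e.\ $\ell(r+r')=\ell(r)+\ell(r')$ and $\ell(cr)=c\,\ell(r)$, and $\ell$ and $B$ do not depend on $r$. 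Taking $r=A[p,J]$ and $r=A[q,J]$ yields $\deg\Det A[I,J]=\deg\Det B+\deg\ell(A[p,J])$ and $\deg\Det B+\deg\ell(A[q,J])=\deg\Det A[(I\setminus p)\cup q,\,J]\le\delta_k(A)$ (or $-\infty$ if that submatrix is singular); hence, by left-linearity of $\ell$, the inequality $\deg(x+y)\le\max\{\deg x,\deg y\}$, and $\deg c\le 0$,
\[
\deg\Det(EA)[I,J]=\deg\Det B+\deg\bigl(\ell(A[p,J])+c\,\ell(A[q,J])\bigr)\le\delta_k(A).
\]
This establishes the invariance, and therefore $\delta_k(A)=\delta_k((t^\alpha))=\alpha_1+\cdots+\alpha_k$, i.e.\ $\alpha_k=\delta_k-\delta_{k-1}$.

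The main obstacle is precisely this last case: the Dieudonn\'e determinant is neither additive nor Laplace-expandable along a row, so the usual multilinearity argument for the stability of minors under elementary operations is unavailable; the left-linear Schur-complement functional $\ell$ is the device that supplies just enough linearity --- at the level of degrees --- to push the ultrametric estimate through. Subsidiary points needing care are the consistent use of left/right module conventions, the verification that the pivot block $B$ can be taken nonsingular (which uses nonsingularity of $A[I,J]$ itself), and the (immediate) checks that permutations and diagonal-unit matrices leave $\delta_k$ unchanged.
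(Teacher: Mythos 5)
Your proof is correct and follows essentially the same route as the paper's: diagonalization by proper/biproper elementary operations for existence, and invariance of $\delta_k$ under biproper transformations (reduced to transvections, permutations, and unit diagonals via the generation of $GL_n(R)$) to identify $\alpha_k=\delta_k-\delta_{k-1}$. Where the paper invokes Cohn's matrix-valuation property (MV) --- that if $C$'s $p$-th row is the sum of those of $A$ and $B$ then $\deg\Det C\le\max\{\deg\Det A,\deg\Det B\}$ --- and applies it to $k\times k$ minors, you re-derive that exact estimate self-containedly via the left-linear Schur-complement functional $\ell(r)$; the two devices are the same block-LU computation in different clothing, so this is a presentational rather than a structural difference. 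One small correction in the key case $p\in I$, $q\notin I$: the rows indexed by $I\setminus\{p\}$ have rank $k-1$ because one may assume $(EA)[I,J]$ nonsingular (otherwise $\deg\Det(EA)[I,J]=-\infty$ and the bound is vacuous), not because $A[I,J]$ is nonsingular; since $E$ leaves those rows unchanged the two assumptions yield the same $B$, so the argument goes through after this swap.
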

A part of the statement is given as an exercise in 
\cite[pp. 459--460]{CohnAlgebra3}
in a general setting of a valuation ring.
The proof is given in Appendix~\ref{app:SM}, 
which goes in almost the same way as in the commutative case.

Any proper element $p/q$ 
is written as $u + p'/q$, where $u \in \mathbb{F}$ and $\deg p' < \deg q$.
Indeed, if $p/q = (a t^k + p'')/(b t^k + q')$ for $a,b \in \FF$, $b \neq 0$, $\deg p'' < k$, and $\deg q' < k$, then $u = ab^{-1}$.
This element $u$ is uniquely determined (independent of expression $p/q$); 
see~\cite[Exercise 6F]{NoetherianBook}.
Thus any proper matrix $A$ is uniquely written as $A = A^0 + t^{-1} A'$, 
where $A^0$ is a matrix over $\mathbb{F}$ and $A'$ is proper.
\begin{Lem}\label{lem:key}
	Let $A$ be a square proper matrix.
	Then the degree of each diagonal of the Smith-McMillan form of $A$ is nonpositive, and $\deg  \Det A \leq 0$. 
	In addition, the following conditions are equivalent:
	\begin{itemize}
		\item[{\rm (1)}] $\deg  \Det A = 0$.
		\item[{\rm (2)}] $A^0$ is nonsingular over $\FF$.
		\item[{\rm (3)}] $A$ is biproper.
		\item[{\rm (4)}] $A$ is written as $Q_1 Q_2 \cdots Q_k$ $(k \geq 0)$, 
		where each $Q_i$ is a permutation matrix, proper unitriangular matrix, or 
		diagonal matrix with degree-zero elements. 
	\end{itemize}
\end{Lem}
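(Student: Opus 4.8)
The plan is to prove the easy inequalities first and then close the circle of equivalences, using the decomposition $A = A^0 + t^{-1}A'$ and the Smith–McMillan form as the two main tools. First I would establish that every diagonal entry $t^{\alpha_k}$ of the Smith–McMillan form satisfies $\alpha_k \le 0$: by Proposition~\ref{prop:SmithMcMillan}, $\alpha_k = \delta_k - \delta_{k-1}$, and since $A$ is proper every $k\times k$ submatrix is proper, so $\delta_k \le 0$ by Example~\ref{ex:2x2-deg}-type reasoning (more precisely, a proper matrix of any size has determinant of nonpositive degree — this needs a short induction on size via a cofactor-type expansion in the Bruhat normal form, or simpler: $\delta_k$ is monotone nondecreasing in $k$ since any $(k-1)\times(k-1)$ submatrix extends, wait, that gives $\delta_{k-1}\le\delta_k$ only with the right conventions). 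Cleaner: I would argue directly that $S A T = (t^\alpha)$ with $S,T$ biproper implies $\deg\Det A = \deg\Det S^{-1} + \sum_k \alpha_k + \deg \Det T^{-1}$ by Lemma~\ref{lem:DegAB}, and the biproper factors contribute $0$ (this is essentially condition (3)$\Rightarrow$(1), so I should be careful about circularity — instead I will just use that biproper matrices have $\deg\Det = 0$, which follows because both $B$ and $B^{-1}$ are proper and $\deg\Det B + \deg\Det B^{-1} = 0$ with both summands $\le 0$ once we know properness forces $\deg\Det\le 0$). So the genuine first step is: \emph{properness of $A$ implies $\deg\Det A\le 0$}, proved by induction on $n$ using the Bruhat normal form $A = LDPU$ — here $\deg\Det A = \sum\deg d_i$, and one shows the pivots produced by Gaussian elimination on a proper matrix stay proper (the Schur-complement step $d - ca^{-1}b$ is proper when $a,b,c,d$ are and $a$ is chosen as a maximal-degree, i.e. degree-$0$ or the "least improper", entry — this is exactly the content of Example~\ref{ex:2x2-deg}). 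Combined with Proposition~\ref{prop:SmithMcMillan} this also gives $\alpha_k\le 0$ for all $k$.

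Next I would prove (1)$\Leftrightarrow$(2). Write $A = A^0 + t^{-1}A'$. The direction (2)$\Rightarrow$(1): if $A^0$ is nonsingular over $\FF$, then $A = A^0(I + t^{-1}(A^0)^{-1}A')$; the first factor has $\deg\Det = \deg\det A^0 = 0$, and $I + t^{-1}B$ with $B$ proper has Bruhat pivots of degree $0$ (its leading part $(I+t^{-1}B)^0 = I$ is the identity, so Gaussian elimination never needs a row swap and produces only proper unitriangular factors and degree-$0$ diagonals), hence $\deg\Det(I+t^{-1}B) = 0$; apply Lemma~\ref{lem:DegAB}. The direction $\neg$(2)$\Rightarrow\neg$(1): if $A^0$ is singular over $\FF$, pick a nonzero left kernel row vector $v\in\FF^n$ with $vA^0 = 0$; extend $v$ to an invertible matrix $P$ over $\FF$ whose first row is $v$. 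Then $PA$ has first row equal to $t^{-1}(vA')$, which is $t^{-1}$ times a proper row; after multiplying that row by $t$ (a biproper diagonal-ish operation changing $\deg\Det$ by $-1$... actually by $+(-1)$ on the scaling matrix, hence raising $\deg\Det$ of the product by $1$ relative to $PA$), the resulting matrix is still proper, so its $\deg\Det\le 0$, whence $\deg\Det(PA)\le -1$, so $\deg\Det A \le -1 < 0$. I need to be a little careful that $vA'$ is genuinely proper (yes, product of an $\FF$-row with a proper matrix) and that the row-scaling bookkeeping is right, but this is routine.

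Then (2)$\Leftrightarrow$(3): if $A^0$ is nonsingular, the factorization $A = A^0(I + t^{-1}(A^0)^{-1}A')$ exhibits $A$ as biproper once we check $(I+t^{-1}B)^{-1}$ is proper — and indeed $(I+t^{-1}B)^{-1} = \sum_{j\ge 0}(-1)^j t^{-j}B^j$, which is a well-defined proper matrix since the series is "$t$-adically" convergent (each entry is a proper element: the coefficient of $t^{-j}$ is bounded, formally this is inversion in the complete ring $\FF((t^{-1}))^{n\times n}$, or one argues by the adjugate/Cramer formula over the skew field that the inverse is proper because $\deg\det$ of cofactors $\le 0$ and $\deg\det(I+t^{-1}B) = 0$). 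Conversely if $A$ is biproper, then $(A^{-1})^0 = (A^0)^{-1}$ by taking the degree-$0$ part of $AA^{-1} = I$ (the product's degree-$0$ part is $A^0(A^{-1})^0$ since lower-degree terms only contribute to lower degrees), so $A^0$ is invertible over $\FF$. Finally (3)$\Leftrightarrow$(4): (4)$\Rightarrow$(3) is immediate since each listed $Q_i$ is manifestly biproper and biproperness is closed under products (Lemma~\ref{lem:DetAB} plus properness of products); (3)$\Rightarrow$(4) follows by running Gaussian elimination — the Bruhat normal form $A = LDPU$ — and observing that when $\deg\Det A = 0$ the diagonal $D$ has all entries of degree $0$ (they multiply to something of degree $0$ and each has degree $\le 0$ by the properness-of-pivots argument above), while $L, U$ are proper unitriangular, $P$ a permutation, and $D$ a degree-$0$ diagonal, so $A = LDPU$ is exactly a product of the allowed types. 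The main obstacle I anticipate is the foundational step — \emph{properness implies $\deg\Det\le 0$} and the accompanying fact that Gaussian-elimination pivots on a proper matrix can be kept proper — because in the skew setting one cannot expand determinants multilinearly and must instead track the Bruhat/LU decomposition carefully, making sure the pivot selection (always pivot on an entry of maximal degree in its column, which is $\le 0$) keeps the Schur complement proper; everything after that is bookkeeping with Lemma~\ref{lem:DegAB} and the $A = A^0 + t^{-1}A'$ decomposition.
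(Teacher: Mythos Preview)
Your argument is correct, but it takes a longer and more self-contained route than the paper. The paper leans almost entirely on Proposition~\ref{prop:SmithMcMillan} (and its appendix proof): for the opening inequality it simply observes that $\alpha_1 = \delta_1$ is the maximum degree of an entry of $A$, hence $\alpha_1 \le 0$ by properness, and then $\alpha_1 \ge \alpha_2 \ge \cdots \ge \alpha_n$ gives all $\alpha_k \le 0$ and $\deg\Det A = \delta_n = \sum_k \alpha_k \le 0$ in one line --- no induction, no tracking of Gaussian pivots. For the equivalences the paper runs the short cycle $(4)\Rightarrow(3)\Rightarrow(2)\Rightarrow(1)\Rightarrow(4)$: the step $(2)\Rightarrow(1)$ is done by writing $A^0 = S^0 (t^\alpha)^0 T^0$ from the Smith--McMillan form and noting $(t^\alpha)^0$ is singular whenever some $\alpha_k < 0$; the step $(1)\Rightarrow(4)$ just cites the appendix construction of $S,T$ as products of the allowed types, so that $\deg\Det A = 0$ forces $(t^\alpha) = I$ and $A = S^{-1}T^{-1}$. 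Your approach instead rebuilds these facts from the Bruhat decomposition and the factorization $A = A^0(I + t^{-1}(A^0)^{-1}A')$, together with a kernel-vector/row-scaling argument for $\neg(2)\Rightarrow\neg(1)$. This buys you independence from the appendix proof of Proposition~\ref{prop:SmithMcMillan}, at the cost of the ``pivots stay proper'' verification you correctly flag as the main obstacle (and which the paper sidesteps entirely). One minor point: your first attempt to deduce $\alpha_k \le 0$ from $\delta_k \le 0$ does not work as written (nonpositivity of the $\delta_k$ alone does not force $\delta_k - \delta_{k-1} \le 0$); the clean fix, which you almost reach and which the paper uses, is just $\alpha_1 = \delta_1 \le 0$ plus the built-in ordering of the $\alpha_k$.
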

\begin{proof}
	The former part is immediate from Lemma~\ref{prop:SmithMcMillan} with
	the fact that $\alpha_1$ is the maximum degree of entries of $A$, and is now nonpositive; then $\deg \Det A = \delta_n = \sum_{k=1}^n \alpha_k \leq 0$. 
	
	We show the equivalence.
	
	(4) $\Rightarrow$ (3) follows from the fact that 
	each $Q_i$ is biproper.
	
	(3) $\Rightarrow$ (2). 
	If $B$ is the inverse of $A$ and is represented as 
	$B = B^0 + t^{-1}B'$ for proper $B'$, 
	then $B^0$ must be the inverse of $A^0$.
	
	(2) $\Rightarrow$ (1).  Consider the Smith-McMillan form 
	$A = S(t^{\alpha})T$.  From $\alpha \leq 0$ and $S^0 (t^{\alpha})^0 T^0 = A^0$, 
	if $\deg \Det A = \sum_{k} \alpha_k <0$, then $\alpha_k < 0$ for some $k$, and
	$A^0$ must be singular.
	  
	 (1) $\Rightarrow$ (4).  We see in the proof of Proposition~\ref{prop:SmithMcMillan} (Appendix~\ref{app:SM}) 
	 that $S,T$ in the Smith-McMillan form of $A$
	 are taken as the product of those matrices.
\end{proof}
\begin{Lem}
	For $A \in \mathbb{F}(t)^{n \times n}$, it holds $\deg \Det A \leq n \alpha_1$.
	In addition, if $A$ is a nonsingular polynomial matrix, then $\deg \Det A \geq 0$. 
\end{Lem}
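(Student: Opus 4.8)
The first inequality comes directly from the Smith--McMillan form. We may assume $A$ is nonsingular, since otherwise $\deg \Det A = -\infty$ and there is nothing to prove. Proposition~\ref{prop:SmithMcMillan} then supplies integers $\alpha_1 \geq \alpha_2 \geq \cdots \geq \alpha_n$ with $SAT = (t^\alpha)$ for biproper $S,T$; as the only $n \times n$ submatrix of $A$ is $A$ itself, we have $\deg \Det A = \delta_n$, and telescoping gives $\delta_n = \sum_{k=1}^{n} (\delta_k - \delta_{k-1}) = \sum_{k=1}^{n} \alpha_k \leq n\alpha_1$, using $\delta_0 = 0$ and $\alpha_k \leq \alpha_1$ for all $k$.

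For the second statement, the plan is to induct on $n$, reducing the first column of $A$ by Euclidean division in the skew polynomial ring $\FF[t]$. The base case $n = 1$ is clear, since a nonzero polynomial has nonnegative degree. For $n \geq 2$, I would first move a nonzero first-column entry of minimal degree into position $(1,1)$ by a row permutation; this multiplies $\Det A$ by a sign and so leaves $\deg \Det A$ unchanged. Then, for each row $j \geq 2$ with $a_{j1} \neq 0$, writing $a_{j1} = q_j a_{11} + r_j$ with $q_j,r_j \in \FF[t]$ and $\deg r_j < \deg a_{11}$ (available because $a_{11}$ has an invertible leading coefficient and $t$ is central), I would subtract $q_j$ times row $1$ from row $j$. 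Each such operation is left multiplication by a polynomial elementary matrix of Dieudonn\'e determinant $1$, so by Lemma~\ref{lem:DegAB} it preserves $\deg \Det A$ while keeping all entries in $\FF[t]$. If the entries below $(1,1)$ are now all zero we are done with this stage; otherwise some new entry is nonzero of degree strictly less than $\deg a_{11}$, and repeating strictly decreases the minimal degree of a nonzero entry in the first column. Since this quantity is a nonnegative integer and the matrix stays nonsingular (so its first column cannot vanish), after finitely many rounds we obtain, with unchanged $\deg \Det A$, a polynomial matrix of block form $\left(\begin{smallmatrix} p & b \\ 0 & A' \end{smallmatrix}\right)$ with $0 \neq p \in \FF[t]$, $b$ a row vector, and $A'$ of size $n-1$.

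It then remains to compute the Dieudonn\'e determinant of this block matrix, which I would do via the factorization $\left(\begin{smallmatrix} p & b \\ 0 & A' \end{smallmatrix}\right) = \left(\begin{smallmatrix} 1 & b(A')^{-1} \\ 0 & I \end{smallmatrix}\right) \left(\begin{smallmatrix} p & 0 \\ 0 & I \end{smallmatrix}\right) \left(\begin{smallmatrix} 1 & 0 \\ 0 & A' \end{smallmatrix}\right)$: the first factor is unitriangular, and embedding the Bruhat normal forms (Lemma~\ref{lem:Bruhat}) of the diagonal blocks of the other two factors shows their Dieudonn\'e determinants to be $p$ and $\Det A'$. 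Hence Lemma~\ref{lem:DegAB} gives $\deg \Det A = \deg p + \deg \Det A'$; here $A'$ is a nonsingular polynomial matrix of size $n-1$, so $\deg \Det A' \geq 0$ by induction, and $\deg p \geq 0$, giving $\deg \Det A \geq 0$. (Alternatively, one may invoke the diagonal reduction of matrices over the principal ideal domain $\FF[t]$: $A = P \Lambda Q$ with $P,Q$ invertible over $\FF[t]$ and $\Lambda$ diagonal with nonzero polynomial diagonal, so $\deg \Det A = \deg \Det \Lambda \geq 0$.) I expect the only genuine obstacles to be the termination of the column reduction --- handled by the strict decrease of a nonnegative integer together with nonsingularity --- and the block-triangular identity for $\Det$, which is a short computation from multiplicativity (Lemma~\ref{lem:DegAB}) and the Bruhat normal form.
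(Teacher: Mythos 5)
Your argument is correct. The first half is the paper's argument: telescope the Smith--McMillan exponents, using that biproper $S,T$ have $\deg\Det = 0$. For the second half the paper simply invokes the (skew) Smith normal form over the Euclidean domain $\FF[t]$: $PAQ$ is a nonzero polynomial diagonal matrix with $P,Q$ products of permutations and $\FF[t]$-elementary matrices, hence $\deg\Det P = \deg\Det Q = 0$ and $\deg\Det A \geq 0$ --- which is precisely your parenthetical alternative. Your primary route is genuinely different: an induction on $n$ that re-derives the needed piece of that diagonalization by hand, using row operations only, plus the block-triangular factorization $\left(\begin{smallmatrix} p & b \\ 0 & A'\end{smallmatrix}\right) = \left(\begin{smallmatrix} 1 & b(A')^{-1} \\ 0 & I\end{smallmatrix}\right)\left(\begin{smallmatrix} p & 0 \\ 0 & I\end{smallmatrix}\right)\left(\begin{smallmatrix} 1 & 0 \\ 0 & A'\end{smallmatrix}\right)$ to obtain $\deg\Det A = \deg p + \deg\Det A'$. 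This buys self-containedness (no citation to Cohn's Section~9.2), makes the termination argument explicit (strict decrease of the minimal degree in column one), and avoids column operations entirely, at the cost of being somewhat longer. One small point you should state: $A'$ is nonsingular, because a nonzero $x$ with $xA' = 0$ would make $(0\ x)$ a left-kernel vector of the block matrix; with that, the inductive hypothesis applies and the induction closes.
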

The latter part is contained in \cite[Theorem 1.1]{Taelman06}.
\begin{proof}
	The first statement follows from the Smith-McMillan form 
	and $\deg \Det S = \deg \Det T = 0$ for biproper matrices $S,T$ 
	(Lemma~\ref{lem:key}).
	The polynomial ring $\FF[t]$ is a (left and right) Euclidean domain.
	Therefore, by elementary row and column  operations on $\FF[t]$ 
	with row and column permutations,
	$A$ is diagonalized so that the diagonal entries are polynomials in $\FF[t]$ 
	(such as the Smith form); see \cite[Section 9.2]{CohnAlgebra3}. 
	Namely $PAQ$ is a diagonal polynomial matrix
	for some matrices $P,Q$ with $\deg \Det P = \deg \Det Q = 0$.
	This implies $\deg \Det A = \deg \Det PAQ \geq 0$.
\end{proof}

Finally we note a useful discrete convexity property of the degree of the Dieudonn\'e determinant.
A {\em valuated matroid}~\cite{DressWenzel_greedy,DressWenzel} on a set $E$ is a function $\omega: 2^E \to \RR \cup \{-\infty\}$ satisfying the following condition:
\begin{description}
	\item[{\rm (EXC)}] For any $X,Y \subseteq E$ with $\omega(X),\omega(Y) \neq - \infty$ 
	and $e \in X \setminus Y$, there is $f \in Y \setminus X$ such that
	\begin{equation}
	\omega(X) + \omega(Y) \leq \omega(X \cup \{f\} \setminus \{e\}) + \omega(Y \cup \{e\} \setminus \{f\}). 
	\end{equation}
\end{description} 
It is well-known in the literature that 
the deg-det function gives rise to a valuated matroid~\cite{DressWenzel_greedy,DressWenzel}; see~\cite[Chapter 5]{MurotaMatrix}.
This is also the case for the deg-Det function.
\begin{Prop}\label{prop:valuated}
	Let $A$ be an $n \times m$ matrix over $\FF(t)$. 
	The following function $\omega_A: 2^{\{1,\ldots,m\}} \to \RR \cup \{-\infty\}$ is 
	a valuated matroid:
    \begin{equation}
    \omega_A(X) 
    := \left\{ \begin{array}{ll}
    \deg \Det A[X] & {\rm if}\ |X| = n, \\
    - \infty & {\rm otherwise}
    \end{array}\right.
    \quad (X \subseteq \{1,2,\ldots,m\}),
    \end{equation}
    where $A[X]$ denotes the submatrix of $A$ consisting of the $i$-th columns over $i \in X$.
\end{Prop}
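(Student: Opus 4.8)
The plan is to verify the valuated‑matroid exchange axiom (EXC) by reducing it, in two steps, to a computation with $2\times 2$ Dieudonn\'e determinants. First I would note that the effective domain $\{X : \omega_A(X)\neq-\infty\}$ is precisely the family of bases of the linear matroid on $\{1,\dots,m\}$ given by linear independence of the columns of $A$ over $\FF(t)$ (the columns of a matrix over a skew field form a matroid, just as over a field). Hence, by a standard local characterisation of valuated matroids (a three‑term Grassmann--Pl\"ucker relation; see~\cite{MurotaBook,DressWenzel}), it suffices to verify: for every $(n-2)$-subset $T\subseteq\{1,\dots,m\}$ and all distinct $a,b,c,d\notin T$, the maximum of the three sums
\[
\omega_A(T\cup\{a,b\})+\omega_A(T\cup\{c,d\}),\qquad \omega_A(T\cup\{a,c\})+\omega_A(T\cup\{b,d\}),\qquad \omega_A(T\cup\{a,d\})+\omega_A(T\cup\{b,c\})
\]
is attained at least twice. (If $\rank_{\FF(t)}A<n$ then $\omega_A\equiv-\infty$; otherwise we proceed.)

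For the reduction to $n=2$, I would fix $T$ and $a,b,c,d$. If $\rank_{\FF(t)}A[T]<n-2$, all six terms vanish and the relation is trivial, so assume the columns $A[T]$ are linearly independent. The key point is that left‑multiplying $A$ by a nonsingular matrix over $\FF(t)$, and right‑multiplying the columns indexed by $T$ by a nonsingular matrix, each change all the values $\omega_A(T\cup\{p,q\})$ $(p,q\notin T)$ by one and the same additive constant, by the multiplicativity of $\deg\Det$ (Lemma~\ref{lem:DegAB}); hence both operations preserve the three‑term relation. Using them I may assume that $A[T]$ consists of the unit columns $\mathbf{e}_3,\dots,\mathbf{e}_n$. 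Then, for any $p,q\notin T$, clearing the last $n-2$ coordinates of columns $p$ and $q$ by elementary column operations (each of Dieudonn\'e determinant $1$ by Lemma~\ref{lem:DetAB}) turns $A[T\cup\{p,q\}]$ into a block‑diagonal matrix with blocks $\left(\begin{smallmatrix}\alpha_p&\alpha_q\\\beta_p&\beta_q\end{smallmatrix}\right)$ and $I_{n-2}$, where $(\alpha_r,\beta_r)$ denotes the first two coordinates of the $r$‑th column of $A$; so by the Bruhat normal form (Lemma~\ref{lem:Bruhat}), $\omega_A(T\cup\{p,q\})=\deg\Det\left(\begin{smallmatrix}\alpha_p&\alpha_q\\\beta_p&\beta_q\end{smallmatrix}\right)$. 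This reduces everything to the case $n=2$: for a $2\times4$ matrix $M=(\,v_a\ v_b\ v_c\ v_d\,)$ over $\FF(t)$, show that the maximum of the three sums $\deg\Det M[\{p,q\}]+\deg\Det M[\{r,s\}]$, over the pairings $\{pq\mid rs\}$ of $\{a,b,c,d\}$, is attained at least twice.

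For the $n=2$ case I would first set aside the degenerate situations: if some $v_r=0$ or $\rank_{\FF(t)}M\le1$, all relevant terms are $-\infty$. Otherwise, right‑scaling each $v_r$ by a suitable power of $t$ (again merely a common shift of all three sums) I may assume each $v_r=(\alpha_r,\beta_r)$ is proper with $\max\{\deg\alpha_r,\deg\beta_r\}=0$; write $v_r^{0}\in\FF^2\setminus\{0\}$ for its $\FF$‑part. By Lemma~\ref{lem:key}, $\deg\Det M[\{p,q\}]=0$ when $v_p^{0},v_q^{0}$ are linearly independent over $\FF$, and $\deg\Det M[\{p,q\}]<0$ otherwise. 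If no three of the lines $[v_a^{0}],\dots,[v_d^{0}]\in\mathbb{P}^1(\FF)$ coincide, the three‑term relation is immediate from this dichotomy: when two of them coincide, the pairing keeping that pair together has a strictly smaller sum than the other two, which equal $0$; when all four are distinct, all three sums equal $0$. If at least three of the lines coincide, I would rotate by a matrix in $\mathrm{GL}_2(\FF)$ --- which fixes every $\deg\Det$ --- so that those lines become $[\mathbf{e}_1]$; for such indices $\alpha_r\neq0$, and setting $\xi_r:=\beta_r\alpha_r^{-1}$ one reads off from Example~\ref{ex:2x2} (using $\beta_q-\beta_p\alpha_p^{-1}\alpha_q=(\xi_q-\xi_p)\alpha_q$) that $\deg\Det M[\{p,q\}]=\deg(\xi_p-\xi_q)$ whenever $[v_p^{0}]=[v_q^{0}]=[\mathbf{e}_1]$, while $\deg\Det M[\{p,q\}]=0$ when exactly one of $[v_p^{0}],[v_q^{0}]$ is $[\mathbf{e}_1]$. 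The conclusion then follows from the elementary four‑point condition for the valuation $\deg$ on $\FF(t)$ --- among the three sums $\deg(\xi_p-\xi_q)+\deg(\xi_r-\xi_s)$ the maximum is attained at least twice (which degenerates to the three‑point isosceles property when only three $\xi_r$ are defined) --- itself a standard consequence of $\deg(x+y)\le\max\{\deg x,\deg y\}$, with equality when $\deg x\neq\deg y$.

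The hard part is this last step, the $n=2$ analysis, and within it the sub‑case where three or four of the $\FF$‑part lines coincide: that is exactly where leading terms can cancel, so the crude estimate $\deg\Det\left(\begin{smallmatrix}\alpha_p&\alpha_q\\\beta_p&\beta_q\end{smallmatrix}\right)\le\max\{\deg\alpha_p+\deg\beta_q,\ \deg\alpha_q+\deg\beta_p\}$ of Example~\ref{ex:2x2-deg} no longer suffices and one must pass to the residue data $\xi_r$ and invoke the four‑point condition; everything else is bookkeeping resting on Lemma~\ref{lem:DegAB} and the block structure of the Bruhat form. A more direct route, parallel to Murota's proof for $\deg\det$ \cite{MurotaMatrix}, would instead establish the ``cofactor‑expansion'' degree inequality $\deg\Det C\le\max_j\{\deg c_{kj}+\deg\Det C(k\mid j)\}$ (with $C(k\mid j)$ the submatrix obtained by deleting row $k$ and column $j$) and then imitate the augmenting‑path argument for weighted bipartite matching; but over a skew field that inequality appears to cost about as much as the reduction above, so I would take the latter.
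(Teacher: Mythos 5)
Your proposal is correct and follows the same overall skeleton as the paper's proof: both invoke the local characterization of valuated matroids (that the effective domain is a matroid and (EXC) holds for all two-element swaps), and both reduce the two-swap check to the $2\times 4$ case by row-reducing $A[T]$ to a unit block and clearing the corresponding rows in the remaining four columns, using Lemma~\ref{lem:DegAB} and the block behaviour of $\Det$ under unitriangular/permutation factors. Where you diverge is the final $2\times 4$ verification. The paper does a short but opaque computation: it puts the $2\times 4$ matrix into one of two explicit normal forms by row operations (killing the $(2,1)$-entry, and if possible the $(1,3)$-entry), then reads off the three sums directly from the $2\times 2$ formula of Example~\ref{ex:2x2-deg}. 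You instead normalize each column to be proper with degree-$0$ leading part, pass to the residue lines $[v_r^0]\in\mathbb{P}^1(\FF)$, and split into the case where the residues are well-spread (where Lemma~\ref{lem:key} gives the $0$/$<0$ dichotomy immediately) and the case where at least three residues collapse, in which the problem reduces via $\xi_r=\beta_r\alpha_r^{-1}$ to the four-point (or three-point isosceles) condition for the non-Archimedean valuation $\deg$ on $\FF(t)$. Both arguments are sound; the paper's is shorter and purely computational, while yours makes the underlying tree/ultrametric geometry explicit and cleanly isolates the only genuinely subtle sub-case (cancellation of leading terms) as a residue-level phenomenon. One small point worth noting for rigor: your computation $\beta_q-\beta_p\alpha_p^{-1}\alpha_q=(\xi_q-\xi_p)\alpha_q$ and the ensuing equality $\deg\Det=\deg(\xi_p-\xi_q)$ rely on the multiplicativity of $\deg$ over the skew field $\FF(t)$, which holds here because $t$ is central; it would be worth stating this explicitly since it is exactly the property that makes $-\deg$ a genuine valuation and the four-point condition applicable.
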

\begin{proof} 
	For the verification we use a local characterization \cite[Theorem 5.2.25]{MurotaMatrix} of valuated matroids, which says that function $\omega:2^{\{1,2,\ldots,m\}} \to \RR \cup \{-\infty\}$ is 
	a valuated matroid if and only if $\{ X \subseteq \{1,2,...,m\} \mid \omega(X) \neq - \infty\}$ 
	is the base family of a matroid 
	and $\omega$ satisfies (EXC) for all pairs $X,Y$ 
	with $|X \setminus Y| = |Y \setminus X| = 2$.
	The first condition for $\omega_A$ follows from the observation that 
    $\omega_A(X) \neq \infty$ if and only 
	if $A[X]$ is nonsingular, i.e., 
	$X$ forms a basis of (right) vector space $\FF(t)^n$ over skew field $\FF(t)$.
	Therefore the family $\{ X \subseteq \{1,2,...,m\} \mid \omega(X) \neq - \infty\}$ 
	is the base family of a representable matroid over skew field $\FF(t)$. 

	We next consider the latter condition, i.e.,  
	(EXC) for $X,Y$ with $|X \setminus Y| = |Y \setminus X| = 2$, where
	we can assume that $\omega_{A}(X) \neq -\infty$.
	Let $A'$ denote the $n \times (n+4)$ submatrix of $A$ consisting columns in $X \cup Y$.
	By Lemma~\ref{lem:DegAB}, 
	elementary row operations (or multiplying a nonsingular matrix from left) to $A$ 
	does not change $\omega_A$ other than constant addition.
	Also we can arrange columns of $A'$ so that $X \cap Y$  forms the first $n-2$ columns. 
	Hence we can assume that $A'$ is the form $\left( \begin{array}{cc} I & C \\ O & B \end{array} \right)$,
	where $I$ is the unit matrix of size $n-2$, $B$ is a $2 \times 4$ matrix, 
	and  $C$ is an $(n-2) \times 4$ matrix. 
	Then $\omega_{A'} ( (X \cap Y) \cup \{i, j\}) = \omega_B (\{i,j\})$ holds 
	for $i,j \in (X \setminus Y) \cup (Y \setminus X)$.
	This can be seen from the definition of the Dieudonn\'e determinant as: 
	By elementary column operations (or multiplying an upper-unitriangular 
	matrix from right), $A[(X \cap Y) \cup \{i,j\}]$ becomes 
	$\left( \begin{array}{cc} I & O \\ O & B[\{i,j\}] \end{array} \right)$, 
	and then $\Det A[(X \cap Y) \cup \{i,j\}] = \Det B[\{i,j\}]$.

	Therefore this reduces our problem to the verification of (EXC) for an arbitrary $2 \times 4$ matrix $A$.
	Then (EXC) is equal to: 
	\begin{description}
		\item[{\rm (4PT)}] the maximum of $\omega(12) + \omega(34)$, $\omega(13) + \omega(24)$, $\omega(14) + \omega(23)$ is attained at least twice,
	\end{description}
	where $\omega_A (\{i,j\})$ is simply written as $\omega(ij)$.

	We may assume that $\omega(12) \neq - \infty$ and the $(1,1)$-entry is nonzero (by column permutation). 
	By row operations, we can make $A$ so that the $(2,1)$-entry is zero. 
	If the $(2,3)$-entry is nonzero, 
	then make $A$ so that $(1,3)$-entry is zero.
	Then we may consider two cases:
	\begin{equation*}
	\left( \begin{array}{cccc}
	a & c & d & e  \\
	0 & b & 0 & f
	\end{array}
	\right) \ {\rm and}\  
	\left( \begin{array}{cccc}
	a & c & 0 & e  \\
	0 & b & d & f
	\end{array}
	\right). 
	\end{equation*}
	Recall Example~\ref{ex:2x2-deg}.
	For the former case, $\omega(12) + \omega(34) = \omega(14) + \omega(23) = \deg(a) + \deg (b) + \deg(d) + \deg (f)$, and $\omega(13) + \omega(24) = - \infty$.
	For the latter case, 
	$\omega(12) + \omega(34) = \deg(a) + \deg (b) + \deg(d) + \deg (e)$,
	$\omega(14) + \omega(23) = \deg(a) + \deg (f) + \deg(c) + \deg (d)$, 
	and $\omega(13) + \omega(24) \leq \deg (a) + \deg (d) +\max \{ \deg (c) + \deg(f), \deg(e) + \deg(b)\}$ with equality if $\deg (c) + \deg(f) \neq \deg(e) + \deg(b)$.
	Thus (4PT) holds for all cases.
\end{proof}

\section{L-convex function on Euclidean building}\label{sec:L-convex}

In this section, we introduce 
L-convex function on Euclidean building. 
It will turn out in Section~\ref{sec:computing} that the deg-Det computation 
reduces to an L-convex function minimization 
on the Euclidean building for ${\rm SL}(\KK(t)^n)$.
Our approach is lattice-theoretic. 
First we set up basic lattice terminologies.
Then we introduce a uniform modular lattice, 
which is a lattice-theoretic counterpart of 
a Euclidean building of type A, 
and we introduce L-convexity on it.

\subsection{Lattice}
 
A {\em lattice} is a partially ordered set ${\cal L}$
such that 
every pair $x,y$ of elements has the minimum common upper bound
$x \vee y$ and the maximum common lower bound $x \wedge y$; the former is called the {\em join} and the latter is called the {\em meet}.
The partial order is denoted by $\preceq$, 
where $x \prec y$ is meant as $x \preceq y$ and $x \neq y$.
A totally ordered subset of ${\cal L}$ is called a {\em chain}, 
which is written as $x^0 \prec x^1 \prec \cdots \prec x^k$.
The {\em length} of chain $C$ is defined as $|C|-1$.
For $x,y \in {\cal L}$ with $x \preceq y$, 
the {\em interval} $[x,y]$ is defined as the set of elements $z$ 
with $x \preceq z \preceq y$.
If $[x,y] = \{x,y\}$, we say that $y$ {\em covers} $x$.
In this paper, we only consider lattices in which 
every chain of every interval has a finite length. 
A {\em sublattice} ${\cal L}'$ 
is a subset of ${\cal L}$ such that $x,y \in {\cal L}'$ 
implies $x \wedge y, x \vee y \in {\cal L}'$.
An interval is a sublattice.
The {\em opposite} $\check{\cal L}$ of lattice ${\cal L}$
is the lattice obtained from ${\cal L}$ 
by reversing the partial order of ${\cal L}$.
The direct product ${\cal L} \times {\cal L}'$ of 
two lattices ${\cal L},{\cal L}'$ becomes 
a lattice by the product order: 
$(x,x') \preceq (y,y')$ 
$\Leftrightarrow$ $x \preceq y$ and $x' \preceq y'$.

A {\em modular lattice} is a lattice ${\cal L}$
such that for every triple $x,y,z \in {\cal L}$ 
with $x \preceq z$, it holds $x \vee (y \wedge z) = (x \vee y) \wedge z$.
The opposite of a modular lattice  is also a modular lattice.
A useful criterion for the modularity is given as follows.
A {\em valuation} on a lattice ${\cal L}$ is a function 
$v: {\cal L} \to \RR$ such that
\begin{itemize}
\item $v(x) + v(y) = v(x \wedge y) + v(x \vee y)$ for all $x,y \in {\cal L}$, and
\item $v(x) < v(y)$ for all $x,y \in {\cal L}$ with $x \prec y$.
\end{itemize}
\begin{Lem}[\cite{Birkhoff}]
If a lattice ${\cal L}$ admits a valuation, then ${\cal L}$ is a modular lattice.
\end{Lem}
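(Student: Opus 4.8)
The plan is to exploit the strict monotonicity built into the definition of a valuation: two comparable elements receiving the same value must coincide. Thus the modular law will follow once we check an equality of real numbers. I would fix $x,y,z \in {\cal L}$ with $x \preceq z$ and recall first the \emph{modular inequality}, valid in any lattice: $x \vee (y \wedge z) \preceq (x \vee y) \wedge z$. This is a short order-chase --- $x \preceq x \vee y$ and $x \preceq z$ give $x \preceq (x \vee y) \wedge z$, while $y \wedge z \preceq y \preceq x \vee y$ and $y \wedge z \preceq z$ give $y \wedge z \preceq (x \vee y) \wedge z$, and taking the join of these two lower bounds yields the claim. Writing $a := x \vee (y \wedge z)$ and $b := (x \vee y) \wedge z$, we therefore have $a \preceq b$, so by the second axiom for a valuation it suffices to prove $v(a) = v(b)$: then $a \prec b$ is impossible and $a = b$, which is exactly the modular identity.

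Next I would evaluate $v(a)$ and $v(b)$ using the additivity axiom $v(p) + v(q) = v(p \wedge q) + v(p \vee q)$. Applying it to the pair $(x, y \wedge z)$, and using $x \preceq z$ to collapse $x \wedge y \wedge z = x \wedge y$, gives $v(a) = v(x) + v(y \wedge z) - v(x \wedge y)$. Applying it to the pair $(x \vee y, z)$, and using $x \preceq z$ to collapse $x \vee y \vee z = y \vee z$, gives $v(b) = v(x \vee y) + v(z) - v(y \vee z)$. Subtracting, one gets $v(b) - v(a) = \bigl(v(x \vee y) + v(x \wedge y) - v(x) - v(y)\bigr) + \bigl(v(y) + v(z) - v(y \vee z) - v(y \wedge z)\bigr)$, and each parenthesized term vanishes: the first by additivity for $(x,y)$, the second by additivity for $(y,z)$. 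Hence $v(a) = v(b)$, completing the argument.

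I do not expect a real obstacle here. The only two points needing care are (i) invoking the modular inequality up front, so that we never compare the two lattice elements $a$ and $b$ directly but only their valuations, and (ii) using the hypothesis $x \preceq z$ precisely in the two places required to simplify the triple meet $x \wedge y \wedge z$ and the triple join $x \vee y \vee z$. Once these are in place, the verification of $v(a) = v(b)$ is a two-line cancellation.
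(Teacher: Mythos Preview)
Your argument is correct and is precisely the classical proof of this result. Note, however, that the paper does not supply its own proof of this lemma: it is stated with a citation to Birkhoff and used without further justification, so there is no in-paper argument to compare against. Your write-up is the standard textbook route and would serve perfectly well as the omitted proof.
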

A {\em unit valuation} is a valuation $v$ such that 
$v(x) = v(y) - 1$ provided $y$ covers $x$. 
A modular lattice (having the minimum element) is said to be {\em complemented}
if every element is the join of atoms ($=$ elements covering the minimum element).
\begin{Ex}
	Let $\FF$ be a skew field. 
	The families ${\cal S}_{\rm R}(\FF^n)$ and ${\cal S}_{\rm L}(\FF^n)$ of 
	vector subspaces of $\FF^n$
	are complemented modular lattices, where the partial order is the inclusion relation.
	The join and meet are given by $+$ and $\cap$, respectively. 
	Also $X \mapsto \dim X$ is a unit valuation.
	
	The family of chains of 
	${\cal S}_{\rm R}(\FF^n) \setminus \{\emptyset,  \FF^n \}$ 
	(or ${\cal S}_{\rm L}(\FF^n) \setminus \{\emptyset,  \FF^n \}$)
	is known as the {\em spherical building} of ${\rm SL}(\FF^n)$. 
	More generally, the family of chains of a complemented modular lattice 
	is equivalent to a spherical building of type A. See~\cite{Tits}.
\end{Ex}

A function $f$ on lattice ${\cal L}$ is called {\em submodular} 
if it satisfies
\begin{equation*}
f(x) + f(y) \geq f(x \wedge y) + f(x \vee y) \quad (x,y \in {\cal L}). 
\end{equation*}
As was noticed in~\cite{HamadaHirai17,HH16DM, ItoIwataMurota94},  
MVSP is viewed as a submodular optimization on a modular lattice.
For a matrix $A \in \FF^{n \times n'}$ 
regarded as a bilinear form (\ref{eqn:bilinear}), 
define $r_A:{\cal S}_{\rm L}(\FF^{n}) 
\times {\cal S}_{\rm R}(\FF^{n'}) \to \ZZ$ by
\begin{equation*}
r_A(X,Y) := \mbox{the rank of the restriction of $A$ to $X \times Y$.}
\end{equation*}
\begin{Lem}[\cite{IwataMurota95}; also see \cite{HamadaHirai17,HH16DM}]\label{lem:submo}
    Let $A \in \FF^{n \times n'}$ be a matrix over $\FF$. 
	Then $r_{A}$ is submodular on ${\cal S}_{\rm L}(\FF^{n}) \times \check{\cal S}_{\rm R}(\FF^{n'})$, i.e.,
	\begin{equation*}
		r_A(X,Y) + r_A(X',Y') \geq 
		r_A(X + X',Y \cap Y') + r_A(X \cap X',Y + Y'). 
	\end{equation*}
\end{Lem}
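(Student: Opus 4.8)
The plan is to reduce the inequality to a purely linear-algebraic statement about ranks of restricted bilinear forms, and then prove that statement by a dimension-counting argument. Fix $X, X' \in {\cal S}_{\rm L}(\FF^n)$ and $Y, Y' \in {\cal S}_{\rm R}(\FF^{n'})$. The join in ${\cal S}_{\rm L}(\FF^n) \times \check{\cal S}_{\rm R}(\FF^{n'})$ of $(X,Y)$ and $(X',Y')$ is $(X + X', Y \cap Y')$, and the meet is $(X \cap X', Y + Y')$; so I must show
\[
r_A(X,Y) + r_A(X',Y') \;\geq\; r_A(X + X', Y \cap Y') + r_A(X \cap X', Y + Y').
\]
Write $A|_{U \times V}$ for the restriction of the bilinear form $A$ to $U \times V$, which I regard as a map $U \to V^{*}$ (the dual right-$\FF$-space of $V$), $u \mapsto A(u, \cdot)$; then $r_A(U,V) = \rank (A|_{U \times V})$ equals $\dim U$ minus the dimension of the left kernel $\{u \in U : A(u,v) = 0 \ \forall v \in V\}$. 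I would also record the elementary monotonicity facts $r_A(U,V) \leq r_A(U',V)$ when $U \subseteq U'$, and the "opposite" descriptions via right kernels, which hold over skew fields just as over fields once one is careful about sides.

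The key step is the following inequality, which I would isolate as the heart of the argument: for the fixed data above,
\[
r_A(X + X', Y \cap Y') + r_A(X \cap X', Y + Y') \;\leq\; r_A(X, Y \cap Y') + r_A(X', Y \cap Y') - r_A(X \cap X', Y \cap Y') + r_A(X \cap X', Y + Y').
\]
The first three terms on the right form exactly the submodular inequality for the map $U \mapsto r_A(U, Y \cap Y')$ on the modular lattice ${\cal S}_{\rm L}(\FF^n)$ with $Y \cap Y'$ held fixed — and this is the classical fact that $U \mapsto \dim(U) - \dim(U \cap W)$, i.e. the rank of a linear map restricted to a subspace, is submodular in $U$ (here the relevant "$W$" is the left kernel of $A(\cdot, Y \cap Y')$, a fixed subspace). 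Then I still must compare $r_A(X,Y \cap Y') \le r_A(X,Y)$ and likewise for $X'$, and absorb the last two terms: $- r_A(X \cap X', Y \cap Y') + r_A(X \cap X', Y + Y') \le r_A(X \cap X', Y + Y')$ is too weak, so instead I would run the symmetric argument on the $Y$-side, fixing $X \cap X'$ and using submodularity of $V \mapsto r_A(X \cap X', V)$ on $\check{\cal S}_{\rm R}(\FF^{n'})$, to get $r_A(X \cap X', Y + Y') + r_A(X \cap X', Y \cap Y') \le r_A(X \cap X', Y) + r_A(X \cap X', Y')$, and then bound $r_A(X \cap X', Y) \le r_A(X, Y)$ and $r_A(X \cap X', Y') \le r_A(X', Y')$ by monotonicity. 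Chaining these yields the claimed inequality.

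The main obstacle I anticipate is the bookkeeping of the two one-variable submodularity facts so that the cross terms cancel correctly — in particular making sure the "fixed subspace $W$" in each application (a kernel of the appropriate partial form) does not depend on the variable being moved, which is exactly why one fixes $Y \cap Y'$ in the first application and $X \cap X'$ in the second. A secondary subtlety, purely expository, is that everything must be phrased with consistent handedness: left subspaces on one side, right subspaces on the other, and the "opposite" lattice $\check{\cal S}_{\rm R}$ so that joins/meets land where claimed; but no genuinely new skew-field phenomenon arises, since rank, dimension, and the modular law for subspace lattices all hold verbatim over a skew field (as recorded earlier in Section~\ref{sec:skewfield}). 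I would cite the one-variable submodularity of restricted-map rank as the standard ingredient and keep the write-up to the two symmetric applications plus monotonicity.
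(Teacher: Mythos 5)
Your reduction to one-variable submodularities plus monotonicity has a genuine double-counting gap. You need $r_A(X,Y)+r_A(X',Y')\ge r_A(X+X',Y\cap Y')+r_A(X\cap X',Y+Y')$. Your chain applies submodularity of $U\mapsto r_A(U,Y\cap Y')$ to control $r_A(X+X',Y\cap Y')$, then submodularity of $V\mapsto r_A(X\cap X',V)$ to control $r_A(X\cap X',Y+Y')$, and then four monotonicity bounds. Each of those four monotonicity bounds contributes one of $r_A(X,Y)$ or $r_A(X',Y')$ to the right side, so the right side accumulates to $2r_A(X,Y)+2r_A(X',Y')$, while the two submodularity steps each leave a $-r_A(X\cap X',Y\cap Y')$ behind. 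The inequality you actually end with is
\[
r_A(X+X',Y\cap Y')+r_A(X\cap X',Y+Y')\ \le\ 2r_A(X,Y)+2r_A(X',Y')-2r_A(X\cap X',Y\cap Y'),
\]
and there is no reason for the right side to be bounded by $r_A(X,Y)+r_A(X',Y')$: that would require $r_A(X\cap X',Y\cap Y')\ge\tfrac12\bigl(r_A(X,Y)+r_A(X',Y')\bigr)$, which already fails when $X\cap X'=\{0\}$ but $r_A(X,Y)>0$. No rearrangement of which monotonicity bound you invoke saves this, because separate submodularity in each variable together with monotonicity does not imply joint submodularity on the product lattice; the two one-variable facts you correctly identify are simply not enough.

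The argument that works goes through the kernel directly and never passes through the one-variable reductions. Set $K(V):=\{u\in\FF^n:A(u,v)=0\ \forall v\in V\}$, so $r_A(U,V)=\dim U-\dim\bigl(U\cap K(V)\bigr)$; note $K$ is antitone with $K(Y+Y')=K(Y)\cap K(Y')$ and $K(Y\cap Y')\supseteq K(Y)+K(Y')$. Using the modular law $\dim X+\dim X'=\dim(X+X')+\dim(X\cap X')$, the desired inequality is equivalent to
\[
\dim\bigl((X+X')\cap K(Y\cap Y')\bigr)+\dim\bigl((X\cap X')\cap K(Y+Y')\bigr)\ \ge\ \dim\bigl(X\cap K(Y)\bigr)+\dim\bigl(X'\cap K(Y')\bigr).
\]
Now $(X+X')\cap K(Y\cap Y')\supseteq (X\cap K(Y))+(X'\cap K(Y'))$ and $(X\cap X')\cap K(Y+Y')=(X\cap K(Y))\cap(X'\cap K(Y'))$, so the left side is at least $\dim\bigl((X\cap K(Y))+(X'\cap K(Y'))\bigr)+\dim\bigl((X\cap K(Y))\cap(X'\cap K(Y'))\bigr)$, which equals the right side by the modular law applied once more. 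This uses the antitone containment on the $Y$-side and the valuation property of $\dim$ in a single step; your plan cannot produce the needed cancellation because it tries to control each corner term independently.
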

The vanishing condition $A(X,Y) = \{0\}$ is equivalent to $r_A(X,Y) = 0$. 
By including $r_A$ in the objective as a penalty term, 
MVSP is formulated as 
an unconstrained submodular optimization
over modular lattice ${\cal S}(\KK^{n}) \times \check{\cal S}(\KK^{n'})$:
\begin{eqnarray}
{\rm MVSP}': \quad {\rm Min.} && - \dim X - \dim Y + C \sum_{i=1}^m r_{A_i}(X,Y) \nonumber \\
{\rm s.t.} && X \in {\cal S}(\KK^n), Y \in {\cal S}(\KK^{n'}), \nonumber
\end{eqnarray}
where $C > 0$ is a large constant.
The approach by Hamada and Hirai~\cite{HamadaHirai17} is based on this idea.

\subsection{Uniform modular lattice and Euclidean building}\label{subsec:uniform}
The {\em ascending operator}  of a lattice ${\cal L}$ is 
a map $(\cdot)^+:{\cal L} \to {\cal L}$
defined by
\begin{equation}
(x)^+ := \bigvee \{ y \in {\cal L} \mid \mbox{$y$ covers $x$} \} \quad (x \in {\cal L}).
\end{equation}
A {\em uniform modular lattice}~\cite{HH18a} is a modular lattice ${\cal L}$ such that
the ascending operator is defined and is an automorphism on ${\cal L}$.
Suppose that ${\cal L}$ is a uniform modular lattice.  
The rank ($=$ the length of a maximal chain) of $[x, (x)^+]$ 
is independent of $x$, 
and is called the {\em uniform-rank} of ${\cal L}$.
The inverse of $(\cdot)^+$ is given by $x \mapsto$ 
the meet of elements covered by $x$. 
In particular, the opposite $\check{\cal L}$ of ${\cal L}$ is also uniform modular.
The product of two uniform modular lattices is also uniform modular.
\begin{Ex}
	$\ZZ^n$ becomes a lattice with respect to vector order $\leq$, 
	where $x \vee y$ equals $\max(x,y)$ (componentwise maximum of $x,y$) and 
	$x \wedge y$ equals $\min(x,y)$ (componentwise minimum of $x,y$).
	Now $\ZZ^n$ is a uniform modular lattice, 
	where $z \mapsto \sum_{i=1}^n z_i$ is a unit valuation,  
	the ascending operator is given by $x \mapsto x + {\bf 1}$ for all one-vector $\bf 1$, 
	and the uniform-rank is equal to $n$. 
\end{Ex}

A {\em $\ZZ^n$-skeleton} of ${\cal L}$ 
is a sublattice $\varSigma$ such that $\varSigma$ is isomorphic to $\ZZ^n$ and 
the restriction of the ascending operator of ${\cal L}$ to $\varSigma$ 
is the same as the ascending operator of $\varSigma$.
A chain $x_0 \prec x_1 \prec \cdots \prec x_m$ is said to be 
{\em short} if $x_m \preceq (x_0)^+$.
\begin{Lem}[{\cite{HH18a}}]\label{lem:skeleton}
	Let ${\cal L}$ be a uniform modular lattice with uniform-rank $n$.
	\begin{itemize}
		\item[{\rm (B1)}] For two short chains $C,D$, 
		there is a $\ZZ^n$-skeleton $\varSigma$ containing them.
		\item[{\rm (B2)}] If two $\ZZ^n$-skeletons $\varSigma,\varSigma'$ contain 
		short chains $C,D$, there is an order-preserving bijection from $\varSigma$ to $\varSigma'$ such that it is the identity on $C \cup D$.  
	\end{itemize}	
\end{Lem}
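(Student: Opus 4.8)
The plan is to derive (B1) and (B2) directly from the structural description of uniform modular lattices in terms of short chains, using the fact that a short chain in a uniform modular lattice of uniform-rank $n$ lives inside the interval $[x_0, (x_0)^+]$, which is a complemented modular lattice of length $n$. Such an interval is (the lattice of chains of) a spherical building of type $A_{n-1}$, i.e., it is isomorphic to the lattice of vector subspaces of $\FF^n$ for some skew field $\FF$ (at least in the thick, irreducible case; in general it is a direct product of such). The key translation is: a $\ZZ^n$-skeleton $\varSigma$, when intersected with the interval $[x_0,(x_0)^+]$ for $x_0 \in \varSigma$, should correspond to a maximal flag of complements (an apartment) in that interval, and conversely every apartment of $[x_0,(x_0)^+]$ extends uniquely to a $\ZZ^n$-skeleton of $\cal L$ once $x_0$ is fixed, by iterating the ascending operator $(\cdot)^+$ and its inverse. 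So the proof reduces to two statements about the complemented modular lattice $[x_0,(x_0)^+]$: any two flags lie in a common apartment, and any two apartments containing two given flags are related by an order-isomorphism fixing those flags pointwise — these are precisely the building axioms (B1), (B2) for the spherical building of type $A$, which hold by the classical theory \cite{Tits}.

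In more detail, for (B1) I would first reduce to the case where $C$ and $D$ have a common bottom element: if $C$ starts at $x$ and $D$ starts at $y$, replace $D$ by a short chain $D'$ starting at $x \wedge (\text{something})$ — more carefully, observe that by applying a suitable power of the ascending operator one can assume $C$ and $D$ are both contained in a single interval $[z, (z)^+]$ (take $z$ below the bottoms of both $C$ and $D$ and such that the tops are below $(z)^+$; this uses that $(\cdot)^+$ shifts rank uniformly). Inside the complemented modular lattice $[z,(z)^+] \cong {\cal S}_{\rm R}(\FF^n)$, the chains $C,D$ are two partial flags, and by the building axiom there is an apartment (a maximal set of pairwise-complementary flags, equivalently a sublattice isomorphic to the Boolean lattice $2^{[n]}$ — note $2^{[n]}$ is the interval $[z,(z)^+]$ of $\ZZ^n$) containing both. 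Extend this apartment to all of $\cal L$ by repeatedly applying $(\cdot)^+$ and its inverse: since $(\cdot)^+$ is an automorphism, the images of the apartment under all powers of $(\cdot)^+$ glue into a sublattice $\varSigma$ isomorphic to $\ZZ^n$ on which the ascending operator restricts correctly; this $\varSigma$ contains $C$ and $D$. For (B2), given $\varSigma, \varSigma'$ both containing short chains $C,D$, restrict to a common interval $[z,(z)^+]$ as above; the traces $\varSigma \cap [z,(z)^+]$ and $\varSigma' \cap [z,(z)^+]$ are two apartments of the spherical building containing the flags $C,D$, so by the building axiom there is an order-isomorphism between them that is the identity on $C \cup D$; then extend it to an order-isomorphism $\varSigma \to \varSigma'$ by equivariance under $(\cdot)^+$, which forces the extension and keeps it the identity on $C \cup D$.

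The main obstacle I anticipate is the reduction step — arranging that two given short chains, or two given skeletons, can be seen simultaneously inside one interval $[z,(z)^+]$ — and, relatedly, making precise the correspondence between $\ZZ^n$-skeletons of $\cal L$ and apartments of the local spherical building $[z,(z)^+]$ in a way that is genuinely bijective and $(\cdot)^+$-equivariant. One subtlety is that a $\ZZ^n$-skeleton is not contained in any single interval $[z,(z)^+]$, so the "restriction to a common interval" must be done carefully: a short chain only pins down finitely much data, and one must check that the apartment obtained in $[z,(z)^+]$ really does extend uniquely and consistently under all positive and negative powers of the ascending operator, i.e., that $\bigcup_{k\in\ZZ} (\cdot)^{+k}$ applied to a Boolean sublattice of $[z,(z)^+]$ yields a well-defined sublattice isomorphic to $\ZZ^n$. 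This is where the defining property of a uniform modular lattice — that $(\cdot)^+$ is an automorphism — does the real work, and I expect this to be the technical heart of the argument; everything else is an appeal to \cite{Tits} for the spherical building of type $A$. Since the excerpt attributes this lemma to \cite{HH18a}, I would cite that paper for the detailed verification and present here only the reduction and the gluing construction.
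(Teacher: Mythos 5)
The paper does not prove Lemma~\ref{lem:skeleton}; it cites~\cite{HH18a}. So the question is whether your proposal is correct on its own terms. It is not, and the failure is exactly at the step you flagged as delicate: the reduction to a single interval $[z,(z)^+]$.

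Two short chains $C,D$ in a uniform modular lattice are in general \emph{not} contained in any common interval $[z,(z)^+]$, and no ``suitable power of the ascending operator'' can arrange this, because the ascending operator acts on the whole lattice at once (it shifts $C$ and $D$ together, it cannot bring them closer). Concretely, in the model case ${\cal L}=\ZZ^2$, take $C=\{(0,0)\}$ and $D=\{(10,0)\}$, both singleton (hence short) chains. Any $z$ below both bottoms satisfies $z\le(0,0)$, so $(z)^+ = z+\mathbf{1}\le(1,1)$, which is not $\ge(10,0)$; thus $D\not\subseteq[z,(z)^+]$. Your entire argument for (B1) and (B2) is built on localizing to one residue $[z,(z)^+]\cong{\cal S}_{\rm R}(\FF^n)$ and invoking the spherical building axioms there, and that localization simply does not exist when $C$ and $D$ are far apart in the lattice. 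This is not a technicality to be patched; it is the whole content of the Euclidean (as opposed to spherical) apartment axiom, which must account for simplices at arbitrary distance. An actual proof must be global --- for instance an induction on a distance between $C$ and $D$, or a step-by-step construction of the skeleton that propagates consistently across many residues --- and this is what~\cite{HH18a} supplies. Separately, even in the special case where $C,D$ do share a residue, your claim that a single Boolean sublattice of $[z,(z)^+]$ extends \emph{uniquely} to a $\ZZ^n$-skeleton by iterating $(\cdot)^+$ is asserted but not established (the iterates only produce the cubes at integer multiples of $\mathbf{1}$; showing the sublattice they generate is isomorphic to $\ZZ^n$ is nontrivial), so that part would also need real work. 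As written, the proposal does not prove the lemma.
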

(B1) and (B2) are essentially 
the apartment axiom 
of {\em Euclidean building of type~A}~\cite{BruhatTits}; see \cite{Garrett}.
The paper~\cite{HH18a} shows that 
the family of all short chains
in a uniform modular lattice actually forms
a Euclidean building of type A,
and that every Euclidean building of type A is obtained in this way.
An {\em apartment system} of ${\cal L}$ is a family of $\ZZ^n$-skeletons 
such that a $\ZZ^n$-skeleton in (B1) 
can be chosen from the family.
A $\ZZ^n$-skeleton in an apartment system is simply called an {\em apartment}. 

Next we consider an important example of a uniform modular lattice
arising from a skew field with a discrete valuation.
Let $\FF$ be a skew field, and let $\FF(t)$ be 
the skew field of rational functions over $\FF$.
Let $\FF(t)^-$ be the ring of proper elements of $\FF(t)$. 
Consider the $n$-product $\FF(t)^n$, which is regarded 
as a left $\FF(t)^-$-module of row vectors as well as a right $\FF(t)^-$-module of column vectors.
Let ${\cal L}_{\rm L} (\FF(t)^n)$ 
denote the family of all full-rank free 
$\FF(t)^-$-submodules\footnote{In the literature of building, such a module is called a {\em lattice}. We do not use this term for avoiding confusion.} of $\FF(t)^n$,  where $\FF(t)^n$ is regarded as a left $\FF(t)^-$-module of row vectors.
Let ${\cal L}_{\rm R} (\FF(t)^n)$ be defined as the right analogue.
By definition, 
an element $L \in {\cal L}_{\rm L}(\FF(t)^n)$ is represented 
as $\langle Q \rangle_{\rm L} := \{ \lambda Q \mid \lambda \in (\FF(t)^-)^n \}$ for 
a nonsingular matrix $Q$ over $\FF(t)$.
Similarly,  an element $L \in {\cal L}_{\rm R}(\FF(t)^n)$ 
is written as $\langle P \rangle_{\rm R} := \{ P\lambda  \mid \lambda \in (\FF(t)^-)^n \}$ for 
a nonsingular matrix $P$ over $\FF(t)$.
For $L \in {\cal L}_{\rm L}(\FF(t)^n)$ or ${\cal L}_{\rm R}(\FF(t)^n)$, 
define $\deg L$ by
\begin{equation}
\deg L := \deg \Det P
\end{equation}
for a nonsingular matrix $P$ with $L = \langle P \rangle_{\rm L}$ or 
$\langle P \rangle_{\rm R}$.
This is well-defined; if $\langle P \rangle_{\rm R} = \langle P' \rangle_{\rm R}$, 
then $P' = P S$ for some biproper matrix $S$, and $\deg \Det P' = \deg \Det P$ by Lemmas~\ref{lem:DegAB} and \ref{lem:key}.

We give three lemmas on the family ${\cal L}_{\rm R}(\FF(t)^n)$ below.
They hold when $\rm R$ is replaced by $\rm L$.
The first one is shown in~\cite{HH18a} for the case where $\FF$ is a field.
\begin{Lem}[\cite{HH18a}]\label{lem:L(F(t)^n)}
	${\cal L}_{\rm R}(\FF(t)^n)$ is a uniform modular lattice, 
	where  $\wedge = \cap$, $\vee = +$, 
	$L \mapsto \deg L$ is a unit valuation, 
	the uniform-rank is equal to $n$, 
	and the ascending operator is given by $L \mapsto tL$.
\end{Lem}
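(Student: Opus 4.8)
The plan is to derive the whole lemma from the Smith--McMillan normal form (Proposition~\ref{prop:SmithMcMillan}) together with the multiplicativity of $\deg\Det$ (Lemmas~\ref{lem:DegAB} and \ref{lem:key}); the point is that, since those two inputs are already available over an arbitrary skew field, passing from a field $\FF$ (the case treated in~\cite{HH18a}) to a general skew field introduces nothing essentially new.

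First I would record the basic description of the poset ${\cal L}_{\rm R}(\FF(t)^n)$. Every $L$ in it equals $\langle P\rangle_{\rm R}$ for some $P$ nonsingular over $\FF(t)$: a free $\FF(t)^-$-basis $f_1,\dots,f_n$ of $L$ is automatically $\FF(t)$-linearly independent, since multiplying by a suitable negative power of $t$ (which is central in $\FF(t)$, hence a unit) turns any nontrivial $\FF(t)$-relation among the $f_i$ into a nontrivial $\FF(t)^-$-relation; thus $P:=(f_1\ \cdots\ f_n)$ is nonsingular and $L=\langle P\rangle_{\rm R}$, and (as the paper already notes for the well-definedness of $\deg$) two nonsingular matrices define the same module exactly when they differ by right multiplication by a biproper matrix. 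Next, given $L=\langle P\rangle_{\rm R}$ and $L'=\langle P'\rangle_{\rm R}$, I would apply Proposition~\ref{prop:SmithMcMillan} to $A:=P^{-1}P'$, getting biproper $S,T$ with $SAT=(t^{\alpha})$, $\alpha_1\ge\cdots\ge\alpha_n$; stripping the biproper factors and putting $Q:=PS^{-1}$, the columns $f_1,\dots,f_n$ of $Q$ form a basis of $L$ while the columns $t^{\alpha_1}f_1,\dots,t^{\alpha_n}f_n$ of $Q(t^{\alpha})$ form a basis of $L'$ (again using centrality of $t$). Such a \emph{simultaneous basis}, together with the fact that the ideals of $\FF(t)^-$ form a single chain, yields at once
\begin{equation*}
L\cap L'=\bigoplus_{i} t^{\min(0,\alpha_i)}f_i\,\FF(t)^-,\qquad L+L'=\bigoplus_{i} t^{\max(0,\alpha_i)}f_i\,\FF(t)^-,
\end{equation*}
both of which are full-rank free $\FF(t)^-$-submodules; hence ${\cal L}_{\rm R}(\FF(t)^n)$ is a lattice with $\wedge=\cap$ and $\vee=+$.

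Second, I would check that $\deg$ is a unit valuation; by Birkhoff's criterion (the lemma recalled above) this makes the lattice modular, and since $\deg$ takes integer values and strictly increases along chains, every interval has finite length as required. For an element with $\FF(t)^-$-basis $g_1,\dots,g_n$, Lemma~\ref{lem:DegAB} gives $\deg\bigl(\bigoplus_i t^{b_i}g_i\FF(t)^-\bigr)=\deg\Det(g_1\ \cdots\ g_n)+\sum_i b_i$; feeding the three modules above into this identity and using $\min(0,\alpha_i)+\max(0,\alpha_i)=\alpha_i$ produces $\deg(L\cap L')+\deg(L+L')=\deg L+\deg L'$. If $L\subsetneq L'$, then in a simultaneous basis all $\alpha_i\ge 0$ with $\sum_i\alpha_i\ge 1$, so $\deg L'=\deg L+\sum_i\alpha_i>\deg L$; and $L'$ covers $L$ exactly when a single $\alpha_i$ equals $1$ and the others vanish (otherwise an intermediate module is immediately interpolated), in which case $\deg L'=\deg L+1$. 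Thus $\deg$ is a unit valuation.

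Finally, for the uniform structure: every cover of $L$ lies in the interval $[L,tL]$ (the exponents $\alpha_i$ of a cover are at most $1$), and the assignment $M\mapsto M/L$ identifies $[L,tL]$ with the subspace lattice ${\cal S}(\FF^n)$ of the $\FF$-vector space $tL/L\cong\FF^n$ --- each $M$ with $L\subseteq M\subseteq tL$ is itself full-rank free, because a basis of $L$ adapted to the $\FF$-subspace $M/L\subseteq tL/L$ exhibits an explicit $\FF(t)^-$-basis of $M$. Under this identification the covers of $L$ become the lines of $\FF^n$, whose join is the whole space, so $(L)^+=tL$ is well-defined, the interval $[L,(L)^+]=[L,tL]\cong{\cal S}(\FF^n)$ has length $n$, and $L\mapsto tL$ is an automorphism of ${\cal L}_{\rm R}(\FF(t)^n)$ (with inverse $L\mapsto t^{-1}L$) because $t$ is a central unit of $\FF(t)$. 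This gives all assertions of the lemma, and the identical argument applies with ${\rm L}$ in place of ${\rm R}$. The one genuinely technical step is the reduction to a simultaneous basis of a pair of submodules over the non-commutative valuation ring $\FF(t)^-$; but this is precisely what Proposition~\ref{prop:SmithMcMillan} supplies, so the main obstacle is already handled before we start.
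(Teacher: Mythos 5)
Your proof is correct, and it takes a somewhat different route from the paper's appendix proof. The paper proves that $L\cap M$ and $L+M$ lie in ${\cal L}_{\rm R}(\FF(t)^n)$ by sandwiching them between $t^{-\ell}L$ and $t^{\ell}L$ and invoking that submodules of a free module over the (non\mbox{-}commutative) discrete valuation ring $\FF(t)^-$ are free; it then proves modularity by a direct element-chasing argument (the usual ``normal subgroup'' calculation), and uses the Smith--McMillan form of $P^{-1}Q$ only once, for the covering property of $\deg$. You instead put the Smith--McMillan form at the centre of the whole argument: from it you extract an explicit simultaneous $\FF(t)^-$-basis $f_1,\dots,f_n$ adapted to both $L$ and $L'$, read off $L\cap L'$ and $L+L'$ in closed form, verify the valuation identity $\deg(L\cap L')+\deg(L+L')=\deg L+\deg L'$ (which the paper's appendix does not spell out), and then obtain modularity \emph{from} the valuation via Birkhoff's criterion rather than by an independent argument. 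The simultaneous-basis device is legitimate here because $t$ is central in $\FF(t)$, so the ``stripping of biproper factors'' really does produce a common adapted basis, and the ideals of $\FF(t)^-$ form a chain; you correctly flag this as the one step where non-commutativity must be watched. Your treatment of the ascending operator and uniform-rank via the identification $[L,tL]\cong{\cal S}_{\rm R}(tL/L)$ is essentially the content of Lemma~\ref{lem:[L,tL]}, which does not depend circularly on this lemma, so that is fine as well. Overall your version is slightly longer but more self-contained on the valuation identity; the paper's is more economical in its use of Proposition~\ref{prop:SmithMcMillan} but leaves that identity implicit.
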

We give in the appendix a proof by adapting the argument in \cite{HH18a} 
for our non-commutative setting.

For an integer vector $z \in \ZZ^n$, 
recall that $(t^z)$ denotes the diagonal matrix with diagonals $t^{z_1},t^{z_2},\ldots,t^{z_n}$. 
For a nonsingular matrix $Q$, let $\varSigma_{\rm R}(Q)$ denote 
the sublattice of ${\cal L}_{\rm R}(\FF^n)$ 
consisting of $\langle Q(t^z) \rangle_{\rm R}$ for all $z \in \ZZ^n$. Similarly,  define $\varSigma_{\rm L}(Q)$ by $\varSigma_{\rm L}(Q) := \{ \langle (t^z)Q \rangle_{\rm L} \mid z \in \ZZ^n \}$.
\begin{Lem}[{see \cite[Chapter 19]{Garrett} for the commutative version}]\label{lem:apartment}
	The family of sublattices 
	consisting of $\varSigma_{\rm R}(Q)$ 
	for all nonsingular $Q \in \FF(t)^{n \times n}$ forms an apartment system in ${\cal L}_{\rm R}(\FF(t))$, 
	where $z \mapsto  \langle Q(t^z) \rangle_{\rm R}$ is 
	an isomorphism between $\ZZ^n$ and $\varSigma_{\rm R}(Q)$.
\end{Lem}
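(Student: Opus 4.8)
The plan is to verify in turn the two things the statement asserts: that each $\varSigma_{\rm R}(Q)$ is a $\ZZ^n$-skeleton on which $z\mapsto\langle Q(t^z)\rangle_{\rm R}$ is an isomorphism, and that the family $\{\varSigma_{\rm R}(Q)\}$ is an apartment system, i.e.\ satisfies (B1). The tool behind everything is the action of ${\rm GL}_n(\FF(t))$ on ${\cal L}_{\rm R}(\FF(t)^n)$ by $g\cdot\langle P\rangle_{\rm R}:=\langle gP\rangle_{\rm R}$: because $t$ is central, each $g$ is a lattice automorphism commuting with the ascending operator $L\mapsto tL$, and $g\cdot\varSigma_{\rm R}(Q)=\varSigma_{\rm R}(gQ)$, so the family in the statement is ${\rm GL}_n(\FF(t))$-stable. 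Thus the first assertion reduces to $\varSigma_{\rm R}(I)$, where $\langle(t^z)\rangle_{\rm R}=\{v\in\FF(t)^n:\deg v_i\le z_i\ (i=1,\dots,n)\}$: from this, $z\mapsto\langle(t^z)\rangle_{\rm R}$ is a bijection onto $\varSigma_{\rm R}(I)$ (injectivity, since $\langle(t^z)\rangle_{\rm R}=\langle(t^{z'})\rangle_{\rm R}$ forces $(t^{z-z'})$ to be biproper, hence $z=z'$ by Lemma~\ref{lem:key}), it is order-preserving with $\cap$ corresponding to componentwise $\min$ and $+$ to componentwise $\max$, hence a lattice isomorphism onto a sublattice, and $t\langle(t^z)\rangle_{\rm R}=\langle(t^{z+\mathbf 1})\rangle_{\rm R}$, so by Lemma~\ref{lem:L(F(t)^n)} the ascending operator of ${\cal L}_{\rm R}(\FF(t)^n)$ restricts on $\varSigma_{\rm R}(I)$ to $z\mapsto z+\mathbf 1$. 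Therefore $\varSigma_{\rm R}(I)$, and every $\varSigma_{\rm R}(Q)$, is a $\ZZ^n$-skeleton.

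For (B1), let $C,D$ be short chains; since any apartment containing a maximal extension of $C$ a fortiori contains $C$, I may take $C$ and $D$ maximal, i.e.\ of length $n$. I would then normalize $C$: if its bottom element is $\langle P_0\rangle_{\rm R}$, apply $g_0:=P_0^{-1}$ to make it $L_0:=\langle I\rangle_{\rm R}$; the remaining elements of $C$ lie in $[L_0,tL_0]$ and define a complete flag of the $n$-dimensional $\FF$-vector space $tL_0/L_0$. Choosing an $\FF$-basis $b_1,\dots,b_n$ adapted to that flag and lifting it to the columns of a matrix $U$ (so that $U^0$, the matrix with columns $b_1,\dots,b_n$, is nonsingular over $\FF$, whence $U$ is biproper by Lemma~\ref{lem:key}), one checks $C\subseteq\varSigma_{\rm R}(U)$, and then $g_1:=U^{-1}$ carries $C$ onto the standard maximal chain $\mathcal C_0:\langle I\rangle_{\rm R}\prec\langle(t^{e_1})\rangle_{\rm R}\prec\langle(t^{e_1+e_2})\rangle_{\rm R}\prec\cdots\prec\langle(t^{\mathbf 1})\rangle_{\rm R}$ inside $\varSigma_{\rm R}(I)$, while $D$ becomes some maximal short chain $D'$. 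So it remains to produce, for an arbitrary maximal short chain $D'$, a nonsingular $Q$ with $\varSigma_{\rm R}(Q)\supseteq\mathcal C_0\cup D'$; pulling back by $(g_1g_0)^{-1}$ then gives an apartment $\varSigma_{\rm R}((g_1g_0)^{-1}Q)$ containing the original $C$ and $D$. (Already the weaker fact that any two single elements $\langle P_1\rangle_{\rm R},\langle P_2\rangle_{\rm R}$ of ${\cal L}_{\rm R}(\FF(t)^n)$ lie in a common $\varSigma_{\rm R}(Q)$ falls out of the Smith--McMillan form, Proposition~\ref{prop:SmithMcMillan}: writing $P_1^{-1}P_2=S(t^\alpha)T$ with $S,T$ biproper and putting $Q:=P_1S$, one gets $\langle P_1\rangle_{\rm R}=\langle Q\rangle_{\rm R}$ and $\langle P_2\rangle_{\rm R}=\langle Q(t^\alpha)\rangle_{\rm R}$.)

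The last step is the heart of the matter. It is exactly the apartment axiom for the thin subcomplexes $\varSigma_{\rm R}(Q)$ of the Euclidean building of type A associated with ${\rm SL}(\FF(t)^n)$, and it is equivalent to the Iwahori--Bruhat decomposition ${\rm GL}_n(\FF(t))=\bigsqcup_{w\in\widetilde W}BwB$, where $B$ is the Iwahori subgroup stabilizing $\mathcal C_0$ and $\widetilde W\cong\ZZ^n\rtimes S_n$ is the extended affine Weyl group, realized by the matrices $(t^z)P$ with $P$ a permutation matrix. I would obtain this as in the commutative case (cf.\ \cite[Chapter~19]{Garrett}, \cite{BruhatTits}), by verifying the Tits-system (BN-pair) axioms for $({\rm GL}_n(\FF(t)),B,N,\{s_0,s_1,\dots,s_{n-1}\})$; the point to stress is that this verification uses no commutativity of $\FF$ --- only Gaussian elimination and the Bruhat normal form supplied by Lemma~\ref{lem:Bruhat}, together with the facts that $\FF(t)^-$ is a (non-commutative) discrete valuation ring and $\FF(t)$ is generated over it by $t$. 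An alternative that avoids re-deriving a BN-pair is to invoke the equivalence of \cite{HH18a} between uniform modular lattices and Euclidean buildings of type A, transport a known apartment system across it, and identify it with $\{\varSigma_{\rm R}(Q)\}$. Either way, the one place where care is genuinely needed is this non-commutative Iwahori--Bruhat decomposition: the combinatorics and geometry here do not see commutativity, so the real work is confirming that the classical argument goes through over $\FF(t)$ verbatim.
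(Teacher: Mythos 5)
Your first half (that each $\varSigma_{\rm R}(Q)$ is a $\ZZ^n$-skeleton, via the ${\rm GL}_n(\FF(t))$-action reducing to $Q=I$, the explicit description of $\langle(t^z)\rangle_{\rm R}$, and Lemma~\ref{lem:key} for injectivity) is correct, and your reduction of (B1) to the statement ``for any maximal short chain $D'$ there is $Q$ with $\varSigma_{\rm R}(Q)\supseteq\mathcal C_0\cup D'$'' is also sound. But there you stop short: you assert that this is equivalent to the Iwahori--Bruhat decomposition over $\FF(t)$ and that the BN-pair axioms ``go through verbatim'' without commutativity, and you acknowledge yourself that ``the real work is confirming'' this. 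That unverified step is precisely the content that needs proving, so the proposal as written has a genuine gap --- it reduces the lemma to another nontrivial claim that it does not establish, and in the non-commutative setting one should be wary of importing a group-theoretic decomposition by fiat.

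The paper avoids this entirely by a different and more economical device: it does not re-derive the apartment axiom from scratch. Since Lemma~\ref{lem:L(F(t)^n)} has already shown ${\cal L}_{\rm R}(\FF(t)^n)$ is a uniform modular lattice, Lemma~\ref{lem:skeleton}~(B1) (imported from \cite{HH18a}) already guarantees that \emph{some} abstract $\ZZ^n$-skeleton $\varSigma$ contains the two given short chains. The proof then translates so that both chains lie in a bounded cube $[0,k]^n\subseteq\varSigma\simeq\ZZ^n$, and shows concretely that this cube is contained in $\varSigma_{\rm R}(P)$ for a nonsingular $P$: starting from the module $L\equiv\mathbf 0$, Lemma~\ref{lem:[L,tL]}~(4) produces generators $p_1,\dots,p_n$ of $L$ realizing the covers $\mathbf 0\to e_i$, a biproperness argument (Lemma~\ref{lem:key}) shows these form an $\FF(t)^-$-basis, and an inductive refinement of each $p_i$ forces $\langle(p_1\ \cdots\ t^\ell p_i\ \cdots\ p_n)\rangle\equiv\ell e_i$ up to $\ell\le k$; a final join-and-degree-count argument extends this to the whole cube. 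This route buys a self-contained, elementary lattice argument that never touches the Iwahori subgroup or the affine Weyl group. Your second-mentioned alternative --- transporting an apartment system through the equivalence of \cite{HH18a} and then identifying it with $\{\varSigma_{\rm R}(Q)\}$ --- is close in spirit to the paper's method, but the identification step is exactly the inductive construction you would still need to supply.
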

The proof is given in the appendix.
Next we study the lattice structure of interval 
$[L, (L)^+] = [L,tL]$, which is a complemented modular lattice 
and is turned out to be the spherical building at the link of $L$.
For $M \in {\cal L}_{\rm R}(\FF(t)^n)$ with $L \subseteq M \subseteq tL$, 
the quotient module $M/L$ becomes a right $\FF$-vector space 
by $(u + L)\alpha := u \alpha + L$ for $\alpha \in \FF$. 
For an $\FF$-vector subspace $X$ of $tL/L$, 
define submodule $L \circ X$ of $tL$ by
\begin{equation}
L \circ X := \{ u \in tL \mid u + L \in X  \}.
\end{equation} 

\begin{Lem}\label{lem:[L,tL]}
	Let $L \in {\cal L}_{\rm R}(\FF(t)^n)$.
 \begin{itemize}
 	\item[{\rm (1)}] $tL/L$ is a right $\FF$-vector space with dimension $n$.
 	\item[{\rm (2)}] $[L,tL]$ is isomorphic to ${\cal S}_{\rm R}(tL/L)$ 
 	by $M \mapsto M/L$ with inverse $X \mapsto L \circ X$. 
 	\item[{\rm (3)}] For $X \in {\cal S}_{\rm R}(tL/L)$, 
 	it holds $\deg L \circ X = \deg L + \dim X$.
 	\item[{\rm (4)}] If $L = \langle P \rangle_{\rm R}$, then $[L,tL]$ is given by
 	\begin{equation*}
 	[L,tL] = \{ \langle P S (t^{{\bf 1}_{\leq k}}) \rangle_{\rm R} \mid 0 \leq k \leq n,\ S \in \FF^{n \times n}: \mbox{nonsingular} \}, 
 	\end{equation*}
 	where ${\bf 1}_{\leq k}$ denotes the 0,1-vector such that 
 	the first $k$ elements are $1$ and others are zero.  
\end{itemize}
\end{Lem}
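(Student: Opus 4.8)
The plan is to reduce everything to linear algebra over $\FF$ by choosing a basis for $L$, i.e., writing $L = \langle P \rangle_{\rm R}$ and then (after replacing $P$ by $I$, using that left multiplication by $P^{-1}$ is a degree-preserving lattice isomorphism $\FF(t)^n \to \FF(t)^n$ sending $L$ to $\langle I \rangle_{\rm R} = (\FF(t)^-)^n$) assuming $L = (\FF(t)^-)^n$. So the bulk of the argument is the case $L = (\FF(t)^-)^n$, where $tL = (t\FF(t)^-)^n$ and $tL/L \cong (t\FF(t)^-/\FF(t)^-)^n$; since any proper element is uniquely $u + t^{-1}(\text{proper})$ with $u \in \FF$, the map $u \mapsto ut \bmod L$ identifies $t\FF(t)^-/\FF(t)^- \cong \FF$ as right $\FF$-vector spaces, so $tL/L$ has $\FF$-dimension $n$. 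That gives (1). For a general $L = \langle P\rangle_{\rm R}$ one transports this isomorphism along $u \mapsto Pu$; the degree statements will need Lemmas~\ref{lem:DegAB} and \ref{lem:key} to see that $\deg$ behaves correctly under this change of basis, exactly as in the well-definedness argument for $\deg L$ already given before the lemma.

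For (2) I would check that $M \mapsto M/L$ and $X \mapsto L \circ X$ are mutually inverse order-preserving maps between the interval $[L,tL]$ and ${\cal S}_{\rm R}(tL/L)$. The key point is that every $M$ with $L \subseteq M \subseteq tL$ is recovered from its image $M/L \subseteq tL/L$ via $M = L \circ (M/L)$, which holds because $L \subseteq M$ (so $u \in M \iff u + L \in M/L$ for $u \in tL$); conversely $L \circ X$ is manifestly an $\FF(t)^-$-submodule with $L \subseteq L\circ X \subseteq tL$ and $(L\circ X)/L = X$. Order-preservation in both directions is immediate from the definitions, so this is an isomorphism of posets, hence of lattices (and both sides are complemented modular lattices by Lemma~\ref{lem:L(F(t)^n)} for the interval and the Example on ${\cal S}_{\rm R}$ for the right-hand side). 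For (3), since $\deg$ is a unit valuation on ${\cal L}_{\rm R}(\FF(t)^n)$ (Lemma~\ref{lem:L(F(t)^n)}) and covering in $[L,tL]$ corresponds under the isomorphism of (2) to covering in ${\cal S}_{\rm R}(tL/L)$, which is measured by $\dim$, one gets $\deg L \circ X - \deg L = \dim X$ by comparing the two unit valuations along any maximal chain from $L$ to $L \circ X$; alternatively, realize $L\circ X = \langle P S(t^{{\bf 1}_{\le k}})\rangle_{\rm R}$ as in (4) with $k = \dim X$ and compute $\deg \Det (P S (t^{{\bf 1}_{\le k}})) = \deg\Det P + 0 + k$ using Lemma~\ref{lem:DegAB} and $\deg \Det S = 0$ for $S \in \FF^{n\times n}$ nonsingular.

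For (4): given $M \in [L,tL]$ with $L = \langle P\rangle_{\rm R}$, set $X = M/L \subseteq tL/L$ and let $k = \dim_{\FF} X$. Under the identification $tL/L \cong \FF^n$ (column vectors) arising from the basis of $L$ given by the columns of $P$ scaled by $t$, choose a nonsingular $S \in \FF^{n\times n}$ whose first $k$ columns span $X$; then $L \circ X$ is generated over $\FF(t)^-$ by the first $k$ columns of $PS$ together with the columns of $P$ — equivalently, $L \circ X = \langle PS(t^{{\bf 1}_{\le k}}) \rangle_{\rm R}$, because multiplying the $i$-th column of $PS$ by $t$ for $i \le k$ and leaving the rest produces exactly this generating set (the last $n-k$ columns of $PS(t^{{\bf 1}_{\le k}})$ together with $P$ generate the same module $L$ since $S$ is invertible over $\FF \subseteq \FF(t)^-$). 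Running $k$ over $0,\dots,n$ and $S$ over all nonsingular $\FF$-matrices then gives every element of $[L,tL]$, and conversely each $\langle PS(t^{{\bf 1}_{\le k}})\rangle_{\rm R}$ visibly lies between $L$ and $tL$. I expect the main obstacle to be bookkeeping in (4): verifying carefully that the Hermite/column-echelon normal form of a full-rank proper-submodule relative to the basis $P$ really has the shape $PS(t^{{\bf 1}_{\le k}})$ with $S \in \FF^{n\times n}$ — i.e., that no higher powers of $t$ or genuinely proper (non-constant) denominators are needed — which is where the constraint $L \subseteq M \subseteq tL$ is used, together with the non-commutative Bruhat/echelon reduction over $\FF(t)^-$; the rest is formal once (1) and (2) are in place.
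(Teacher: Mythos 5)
Your proposal follows essentially the same route as the paper and is correct in outline, but one step in part~(2) needs to be filled in. You assert that $L \circ X$ is ``manifestly an $\FF(t)^-$-submodule'' of $tL$ containing $L$; however, membership in ${\cal L}_{\rm R}(\FF(t)^n)$ (and hence in the interval $[L,tL]$) also requires $L \circ X$ to be a \emph{free, full-rank} $\FF(t)^-$-module, and this is not automatic. The paper supplies the missing justification: since $\FF(t)^-$ is a PID, the submodule $L \circ X$ of the free module $tL$ is itself free, and it has rank~$n$ because it contains the full-rank module $L$. Without that remark, $X \mapsto L \circ X$ is not a priori well-defined as a map into $[L,tL]$. (You could alternatively extract freeness and full rank from your part~(4) argument, which exhibits the explicit nonsingular basis matrix $PS(t^{{\bf 1}_{\leq k}})$, but then you would have to prove (4) before (2).)

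On part~(4), there is a genuine but minor methodological difference: you argue the identity $L \circ X = \langle PS(t^{{\bf 1}_{\leq k}})\rangle_{\rm R}$ by directly determining a generating set, whereas the paper establishes only the easy inclusion $\supseteq$ and then forces equality by a degree count using part~(3); both routes are valid. There is also a slip in your phrasing: $L \circ X$ is generated by $t$ times the first $k$ columns of $PS$ together with the remaining columns of $PS$, not by ``the first $k$ columns of $PS$ together with the columns of $P$'' (which would generate only $L$); your subsequent restatement in terms of $PS(t^{{\bf 1}_{\leq k}})$ is the correct one. Parts~(1) and (3) match the paper's arguments; in~(1) the paper works directly with a general basis of $L$ rather than first reducing to $L = (\FF(t)^-)^n$, but the content is identical.
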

\begin{proof}
	(1). Suppose that $\{ p_1,p_2,\ldots,p_n\}$ is a basis of $L$.
	Then $\{tp_1,tp_2,\ldots,tp_n\}$ is a basis of $tL$.
	We show that 
	$\{ tp_1 + L,tp_2+ L,\ldots,tp_n + L\}$ is a basis of $tL/L$.
	Every element $u \in tL$ is written as 
	 $u = \sum_{i=1}^n t p_i \lambda_i$ for $\lambda_i \in \FF(t)^-$. 
	Here $\lambda_i$ is written as 
	$\lambda_i = \lambda_i^0 + t^{-1} \lambda_i'$ for $\lambda_i^0 \in \FF$ and $\lambda_i' \in \FF(t)^-$.
	This means that $u  \in \sum_{i=1}^n t p_i \lambda_i^0 + L$. 
	Thus $\{tp_1 + L,tp_2+ L,\ldots,tp_n + L\}$ spans $tL/L$.
	We show the linear independence.
	Suppose that $\sum_{i=1}^n t p_i \alpha_i = u \in L$ for $\alpha_i \in \FF$.
    Since $\{p_1,p_2,\ldots,p_n\}$ is a basis of $L$, 
	it must be $\alpha_i t \in \FF(t)^-$, and $\alpha_i = 0$.
	
	(2). It suffices to verify that $L \circ X$ is a full-rank free submodule.
	Since $tL$ is a free $\FF(t)^-$-module and $\FF(t)^-$ is PID,  
	the submodule $L \circ X$ of $tL$ is a free module containing $L$, and hence has rank $n$.
	
	(3) follows from (2) and 
	the fact that $\deg$ and $\dim$ are unit valuations.
	
	(4). By the proof of (1), 
	the column vectors of $tP$ modulo $L$ become an $\FF$-basis of $tL/L$.
	Therefore any vector subspace $X \subseteq tL/L$ 
	is spanned by $\FF$-linear combinations of column vectors of $tP$ modulo $L$.
	Thus, if $\dim X =k$, then for some nonsingular matrix $S$ over $\FF$, 
	$X$ is spanned by the first $k$ columns of $tPS$ (modulo $L$). 
	Then $L \circ X = \langle PS(t^{{\bf 1}_{\leq k}}) \rangle_{\rm R}$ must hold.
	Indeed, $\supseteq$ is obvious, and the equality follows 
	from $\deg \langle PS(t^{{\bf 1}_{\leq k}}) \rangle_{\rm R} 
		= \deg L + k = \deg L + \dim X = \deg L \circ X$ (by (3)).
\end{proof}
\begin{Ex}
	Consider the simplest case of ${\cal L}_{\rm R}(\FF(t)^2)$ with $\FF = \ZZ/2\ZZ$.
    Then  $L \in {\cal L}_{\rm R}(\FF(t)^2)$ is spanned by two vectors $p_1,p_2 \in \FF(t)^2$;   
    we simply write it as  $L = \langle p_1, p_2 \rangle_{\rm R}$.
    According to the proof of Lemma~\ref{lem:[L,tL]},  
   $e_1 := tp_1 + L$ and $e_2 := tp_2 + L$ form an $\FF$-basis of $tL/L$.
	In particular, $tL/L$ is isomorphic to $\FF^2$ by  $\FF^2 \ni
	\left( \begin{array}{c}
	a_1 \\
	a_2
	\end{array} \right)
	\mapsto a_1 e_1 + a_2 e_2$.
	There are five subspaces in 
	$\FF^2$:  
	\begin{equation*}
	X_0 = \{0\},\ 
	X_1 = \FF \left( \begin{array}{c}
	1 \\
	0 
    \end{array} \right), \ 	
    X_2 = \FF \left( \begin{array}{c}
    0 \\
    1 
    \end{array} \right),  \
     X_3 = \FF \left( \begin{array}{c}
     1 \\
     1 
     \end{array} \right), \ X_4 = \FF^2.
	\end{equation*}	
	Then $[L, tL]$ consists of $L = L \circ X_0$, $t L = L \circ X_4$, and
	\begin{equation*}
	L \circ X_1 = \langle t p_1, p_2 \rangle_{\rm R},\   L \circ X_2 = \langle t p_2, p_1 \rangle_{\rm R},\  
	L \circ X_3 = \langle t (p_1+ p_2), p_2 \rangle_{\rm R}.
	\end{equation*}	
\end{Ex}

\subsection{L-convex function}
We first review L-convex functions on $\ZZ^n$; see~\cite[Chapter 7]{MurotaBook} for details.  
A function $g:\ZZ^n \to \RR \cup \{\infty\}$ is called {\em L-convex} if
it satisfies:
\begin{description}
	\item[{\rm (SUB$^{\ZZ}$)}] $g(x) + g(y) \geq g(\min(x,y)) + g(\max(x,y))$ for $x,y \in \ZZ^n$.
	\item[{\rm (LIN$^{+\mathbf{1}}$})] There is $r \in \RR$ such that
	$g(x+ {\bf 1}) = g(x) + r$  for $x \in \ZZ^n$.
\end{description}
We treat the infinity element $\infty$ as $\infty + c = \infty$ 
for $c \in \RR \cup \{\infty\}$ and $b < \infty$ for $b \in \RR$. 
Also we assume that any function $g: \ZZ^n \to \RR \cup \{\infty\}$ has a point $x$ with $g(x) < \infty$.  
\begin{Ex}\label{ex:L-convex_ex}
The following function $g:\ZZ^N \to \RR \cup \{\infty\}$ is known to be L-convex: 
\begin{equation*}
g(x) = \sum_{1 \leq i,j \leq N} \phi_{ij}(x_i - x_j)  \quad (x \in \ZZ^N),
\end{equation*}
where $\phi_{ij}:\ZZ \to \RR \cup \{\infty\}$ is a $1$-dimensional convex function for each $i,j$.
This L-convex function arises from the dual of minimum-cost network flow. 
\end{Ex}
We are interested in minimization of an L-convex function. 
Note that
$r = 0$ in (LIN$^{+\mathbf{1}})$ is a necessary condition 
for the existence of a minimizer.
We tacitly assume $r = 0$ in the sequel.
The following optimality property is basic. 
\begin{Lem}[\cite{Murota98_MPA}]\label{lem:opt}
	Let $g:\ZZ^n \to \RR \cup \{\infty\}$ be an L-convex function.
	A point $x \in \ZZ^n$ is a minimizer of $g$ if and only if
	\begin{equation}
	g(x) \leq g(x+ u) \quad (u \in  \{0,1\}^n).
	\end{equation} 
\end{Lem}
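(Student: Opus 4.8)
The ``only if'' direction is immediate, so the content is in ``if''; we may also assume $g(x) < \infty$ (if $g(x) = \infty$, the statement is to be read within the effective domain of $g$, and there is nothing to prove). The plan is to argue by contradiction, exploiting the interplay between {\rm (SUB$^{\ZZ}$)} and the translation rule {\rm (LIN$^{+\mathbf{1}}$)} with $r = 0$.

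Suppose $g(x) \leq g(x + u)$ for every $u \in \{0,1\}^n$, but $g(y) < g(x)$ for some $y \in \ZZ^n$. Since {\rm (LIN$^{+\mathbf{1}}$)} with $r = 0$ gives $g(y + c{\bf 1}) = g(y)$ for all $c \in \ZZ$, I may add a large multiple of ${\bf 1}$ to $y$ and assume $y \geq x$. Among all $y$ with $y \geq x$ and $g(y) < g(x)$ --- a nonempty set by the above --- I would then pick one minimizing the nonnegative integer $\|y - x\|_1 = \sum_i (y_i - x_i)$ (possible by well-ordering), and put $u := y - x \in \ZZ_{\geq 0}^n \setminus \{0\}$.

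The key step is to apply {\rm (SUB$^{\ZZ}$)} to the pair $x + u$ and $x + {\bf 1}$. Since componentwise $\min$ and $\max$ commute with adding the constant vector $x$, one has $\min(x+u, x+{\bf 1}) = x + v$ with $v := \min(u, {\bf 1}) \in \{0,1\}^n$ and $\max(x+u, x+{\bf 1}) = x + w$ with $w := \max(u, {\bf 1}) \geq {\bf 1}$, so
\[
g(x+u) + g(x+{\bf 1}) \geq g(x+v) + g(x+w).
\]
Now $g(x+{\bf 1}) = g(x)$ by {\rm (LIN$^{+\mathbf{1}}$)}, and $g(x+v) \geq g(x)$ by hypothesis (as $v \in \{0,1\}^n$), hence $g(x+w) \leq g(x+u) = g(y) < g(x)$. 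Because $w \geq {\bf 1}$, the point $y' := x + w - {\bf 1}$ satisfies $y' \geq x$ and, again by {\rm (LIN$^{+\mathbf{1}}$)}, $g(y') = g(x+w) < g(x)$; moreover $\|y' - x\|_1 = \sum_i (\max(u_i,1)-1) = \|u\|_1 - |\{\, i : u_i \geq 1 \,\}| < \|u\|_1 = \|y-x\|_1$ since $u \neq 0$. This contradicts the minimality in the choice of $y$, so $x$ must minimize $g$.

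I expect the only genuinely non-routine point to be the choice of $x + {\bf 1}$ as the comparison element in {\rm (SUB$^{\ZZ}$)}: it is precisely this choice that forces the meet to be a $\{0,1\}$-increment of $x$ (bounded below by $g(x)$ through the hypothesis) and the join to be an increment by a vector $\geq {\bf 1}$, whereupon {\rm (LIN$^{+\mathbf{1}}$)} with $r=0$ produces a strictly $\|\cdot\|_1$-closer violating point and drives the descent. The rest --- the well-ordering argument and the elementary bookkeeping with componentwise $\min/\max$ --- is routine. (I note that $r = 0$ cannot be dropped here: for $g(x) = \sum_i x_i$ on $\ZZ^n$ the local inequality $g(x) \leq g(x+u)$ holds at every $x$, yet $g$ has no minimizer.)
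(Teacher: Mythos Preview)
Your proof is correct. The paper does not supply its own proof of this lemma --- it is quoted as a known result from \cite{Murota98_MPA} and only invoked later (e.g., in the proof of Lemma~\ref{lem:opt'}) --- so there is nothing in the paper to compare your argument against; your argument via minimizing $\|y-x\|_1$ and applying {\rm (SUB$^{\ZZ}$)} to the pair $(x+u,\ x+{\bf 1})$ is the standard one in discrete convex analysis.
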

This property naturally leads to the following simple descent algorithm, 
called the {\em steepest descent algorithm}.
\begin{description}
	\item[Steepest Descent Algorithm (SDA$(\ZZ^n)$)]
	\item[Input:] An L-convex function $g:\ZZ^n \to \RR \cup \{\infty\}$.
    \item[Output:] A minimizer of $g$.
    \item[Step 0:] Choose $x^0 \in \ZZ^n$ with $g(x^0) < \infty$. Let $i := 0$. 
    \item[Step 1:] Find a minimizer $y$ of $g$ over $x^i + \{0,1\}^n$. 
    \item[Step 2:] If $g(y) < g(x^i)$, then let $i \leftarrow i+1$, $x^{i} \leftarrow y$, and go to step 1. 
	\item[Step 3:] Otherwise, output $x^i$ as a minimizer.
\end{description}
The point $y$ in Step 1 is called a {\em steepest direction} at $x^i$.
The function $u \mapsto  g(x + u)$ is submodular 
on Boolean lattice $\{0,1\}^n$. 
Hence a steepest direction can be found by 
a submodular function minimization on the Boolean lattice.
It is a fundamental fact in combinatorial optimization 
that any submodular function on the Boolean lattice
can be minimized in polynomial time; see e.g., \cite[Section 14.3]{KorteVygen}.
An intriguing property of SDA is the following bound of the number of the iterations.
\begin{Thm}[\cite{MurotaShioura14}]\label{thm:bound_ZZ^n}
	Let $g:\ZZ^n \to \RR \cup \{\infty\}$ be an L-convex function, 
	and let $k$ be the minimum $l_{\infty}$-distance between $x^0$ 
	and minimizers $y^* \geq x^0$, i.e.,
	\begin{equation}
	k := \min \{\|x^0 - y^* \|_{\infty} \mid \mbox{$y^*$ is a minimizer of $g$ with $y^* \geq x^0$ }   \}.
	\end{equation}
	In SDA, the $k$-th point $x^k$ is a minimizer of $g$.
\end{Thm}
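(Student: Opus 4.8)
The plan is to prove the following sharper, ``local'' statement: the quantity
\[
\mu(x) := \min\{\, \|x - y^*\|_\infty : \text{$y^*$ is a minimizer of $g$ with $y^* \geq x$}\,\}
\]
decreases by exactly one at every iteration of SDA as long as $x^i$ is not a minimizer. Since $\mu(x^0) = k$ by definition, and $\mu(x) = 0$ precisely when $x$ itself is a minimizer, this immediately gives that $x^k$ is a minimizer. Throughout I use $r = 0$ (as assumed) and the standing hypothesis that a minimizer above $x^0$ exists, so all $\mu$-values occurring stay finite.

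First I would set up two structural facts. By (SUB$^{\ZZ}$) the set of minimizers of $g$ is a sublattice of $\ZZ^n$, hence for each $x$ admitting a minimizer above it there is a \emph{smallest} one, $\bar y(x)$, and $\bar y(x) \leq y^*$ for every minimizer $y^* \geq x$; consequently $\mu(x) = \|\bar y(x) - x\|_\infty$. Secondly, along the trajectory $x^{i+1} = x^i + u$ with $u \in \{0,1\}^n$ and $g(x^{i+1}) = \min_{w \in \{0,1\}^n} g(x^i + w)$. I claim the ``target'' evolves as $\bar y(x^{i+1}) = x^{i+1} \vee \bar y(x^i)$: applying (SUB$^{\ZZ}$) to $x^{i+1}$ and $\bar y := \bar y(x^i)$ gives $g(x^{i+1}) + g(\bar y) \geq g(x^{i+1} \wedge \bar y) + g(x^{i+1} \vee \bar y)$, and since $x^i \leq x^{i+1} \wedge \bar y \leq x^i + \mathbf{1}$ we have $g(x^{i+1} \wedge \bar y) \geq \min_{w} g(x^i + w) = g(x^{i+1})$, so $g(x^{i+1} \vee \bar y) \leq g(\bar y) = \min g$; thus $x^{i+1} \vee \bar y$ is a minimizer above $x^{i+1}$, and it is the smallest one because any minimizer above $x^{i+1}$ is above both $x^{i+1}$ and $\bar y$. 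Hence
\[
\mu(x^{i+1}) = \|x^{i+1} \vee \bar y - x^{i+1}\|_\infty = \max_j \max\{\bar y_j - x^i_j - u_j,\ 0\}.
\]
Given this, the theorem reduces to one \emph{local claim}: at a non-minimizer $x$, every minimizer $u$ of $w \mapsto g(x+w)$ over $\{0,1\}^n$ satisfies $u_j = 1$ for every coordinate $j$ attaining $\bar y(x)_j - x_j = \mu(x)$. Indeed, granting this, in the displayed formula the maximizing coordinates contribute $\mu(x^i) - 1$ and all others contribute at most $\bar y_j - x^i_j \leq \mu(x^i) - 1$, so $\mu(x^{i+1}) = \mu(x^i) - 1$ whenever $\mu(x^i) \geq 1$, and the induction closes.

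To attack the local claim I would use the translation submodularity of $g$: combining (SUB$^{\ZZ}$) with (LIN$^{+\mathbf{1}}$, $r=0$) yields, for all $a,b \in \ZZ^n$,
\[
g(a) + g(b) \geq g\big((a+\mathbf{1}) \wedge b\big) + g\big(a \vee (b - \mathbf{1})\big).
\]
Taken with $a = x$, $b = \bar y := \bar y(x)$, and writing $v := \mathbf{1}_{\{j\,:\,\bar y_j > x_j\}}$, one checks $(x+\mathbf{1}) \wedge \bar y = x + v$ and $x \vee (\bar y - \mathbf{1}) = \bar y - v$; moreover $\bar y - v \geq x$ and $\bar y - v$ cannot be a minimizer (that would force $\bar y - v \geq \bar y$, i.e. $v = \mathbf{0}$, i.e. $x = \bar y$, contradicting that $x$ is not a minimizer), so $g(\bar y - v) > g(\bar y)$ and therefore $g(x+v) < g(x)$. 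Thus the ``projection toward $\bar y$'', $v$, is already a strict descent direction in $x + \{0,1\}^n$, and its support contains every binding coordinate. It remains to upgrade this to: every \emph{steepest} direction $u$ (not just $v$) has $u_j = 1$ on the binding coordinates; because the minimizers of $w \mapsto g(x+w)$ form a sublattice of $\{0,1\}^n$, it is equivalent to prove this for the minimal steepest direction $u^\circ$, which one would do by first locating $u^\circ$ relative to $v$ and then feeding $x + u^\circ$ back into the translation-submodularity inequality against $\bar y$, playing the resulting strict-descent point against minimality of both $u^\circ$ and $\bar y$.

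I expect this last step to be the main obstacle: the delicate point is not that \emph{some} good direction ($v$) advances the binding coordinates, but that the steepest direction is forced to do so — this is exactly what turns a crude $\ell_1$-type iteration bound into the exact $\ell_\infty$ bound in the statement, and it is the technical core of the argument in \cite{MurotaShioura14}. Everything else above is routine lattice bookkeeping with (SUB$^{\ZZ}$), (LIN$^{+\mathbf{1}}$), and the optimality criterion of Lemma~\ref{lem:opt}.
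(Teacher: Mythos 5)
The paper only cites this result from \cite{MurotaShioura14} and gives no proof of its own, so there is no internal argument to compare against; I assess your proposal on its merits. Your overall strategy is correct and matches how the theorem is actually used in the proof of Theorem~\ref{thm:bound_L}: track the smallest minimizer $\bar y(x)$ above the current iterate, prove $\bar y(x^{i+1}) = x^{i+1} \vee \bar y(x^i)$ via (SUB$^{\ZZ}$), and reduce the one-step decrease of $\mu$ to the ``local claim'' that every steepest direction $u$ at a non-minimizer $x$ has $u_j = 1$ on all $j$ with $\bar y(x)_j - x_j = \mu(x)$. The existence of $\bar y(x)$, the submodularity argument for the $\bar y$-update, the identity $\mu(x^{i+1}) = \max_j \max\{\bar y_j - x^i_j - u_j,\, 0\}$, the closing induction, and the translation-submodularity computation showing that $v := \mathbf{1}_{\{j : \bar y_j > x_j\}}$ is a strict descent direction covering the binding set are all sound.

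However, as you yourself flag, the local claim is left unproven, and it is the technical core of the theorem rather than routine bookkeeping: knowing that $v$ strictly descends and covers the binding set $B$ does not force a (minimal) steepest direction $u^\circ$ to satisfy $u^\circ \geq \mathbf{1}_B$, and this is precisely where the exact $\ell_\infty$ bound (as opposed to a weaker $\ell_1$-type bound) is earned. Your proposed route of feeding $a = x + u^\circ$, $b = \bar y$ back into translation-submodularity also hits a concrete snag: the identities $(a+\mathbf{1}) \wedge b = a + v$ and $a \vee (b - \mathbf{1}) = b - v$ you exploited at $a = x$ relied on $a \leq b$, and $x + u^\circ \leq \bar y$ is not guaranteed --- $u^\circ$ may have support on coordinates $j$ with $\bar y_j = x_j$, so the clean structure disappears. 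The proposal therefore correctly isolates, but does not close, the crucial gap.
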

Next we introduce L-convex functions on a uniform modular lattice, 
and generalize the above properties. 
The argument goes in a straightforward way.
Let ${\cal L}$ be a uniform modular lattice with uniform-rank $n$. 
A function $g:{\cal L} \to \RR \cup \{\infty\}$
is called {\em L-convex} if it satisfies:
\begin{description}
	\item[{\rm (SUB)}] $g(x) + g(y) \geq g(x \wedge y) + g(x \vee y)$ for all $x,y \in {\cal L}$.
	\item[{\rm (LIN$^+$)}] There is $\alpha \in \RR$ such that
	$g((x)^+) = g(x) + \alpha$  for all $x \in {\cal L}$.
\end{description}
Fix an arbitrary apartment system of ${\cal L}$.
Every apartment of ${\cal L}$ is a sublattice isomorphic to $\ZZ^n$ 
and preserves the ascending operation. 
Thus the L-convexity is characterized by the L-convexity on each apartment.
\begin{Lem}\label{lem:restriction}
A function $g: {\cal L} \to \RR \cup \{\infty\}$ is L-convex 
if and only if the restriction of $g$ to every apartment $\varSigma$
is L-convex, where $\varSigma$ is identified with $\ZZ^n$.
\end{Lem}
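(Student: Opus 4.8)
The plan is to prove the two directions separately, using the apartment axioms (B1)--(B2) from Lemma~\ref{lem:skeleton} to reduce the global conditions (SUB) and (LIN$^+$) on $\cal L$ to their restrictions to apartments. The ``only if'' direction is the easy one: if $g$ is L-convex on $\cal L$, then for any apartment $\varSigma$, which is a sublattice isomorphic to $\ZZ^n$ on which the ascending operator of $\cal L$ restricts to $x \mapsto x+{\bf 1}$, the inequality (SUB) restricted to $x,y \in \varSigma$ reads $g(x)+g(y) \geq g(x\wedge y)+g(x\vee y)$ with $\wedge,\vee$ computed in $\varSigma$ (since $\varSigma$ is a sublattice these agree with $\min,\max$ in $\ZZ^n$), and (LIN$^+$) restricted to $\varSigma$ gives (LIN$^{+{\bf 1}}$) with the same constant $\alpha$. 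So the restriction is L-convex on $\ZZ^n$.

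For the ``if'' direction, suppose the restriction of $g$ to every apartment is L-convex. To verify (SUB) for arbitrary $x,y \in \cal L$: I want a single apartment containing all four of $x,y,x\wedge y,x\vee y$. The idea is that $\{x\wedge y, x, x\vee y\}$ and $\{x\wedge y, y, x\vee y\}$ are chains in $\cal L$ from $x\wedge y$ to $x\vee y$; if these were short chains, (B1) would immediately give an apartment $\varSigma$ containing both, hence all four elements, and then (SUB) follows from L-convexity on $\varSigma$. The difficulty is that in general $x\vee y$ need not lie below $(x\wedge y)^+$, so these chains are not short. The fix is to apply the ascending automorphism: the chains $x\wedge y \preceq x \preceq x\vee y$ need not be short, but one can subdivide. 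More robustly, I would invoke the building structure directly: (B1) of Lemma~\ref{lem:skeleton} is stated for short chains, but the paper~\cite{HH18a} establishes that short chains form a Euclidean building of type A, and the standard apartment axiom for buildings says any two simplices (here: any two elements, or two short chains) lie in a common apartment; iterating through a geodesic gallery connecting $x$ to $y$ one produces an apartment (a $\ZZ^n$-skeleton) containing both $x$ and $y$, and since a $\ZZ^n$-skeleton is a sublattice closed under $\wedge,\vee$, it also contains $x\wedge y$ and $x\vee y$. On that apartment, identified with $\ZZ^n$, L-convexity of the restriction gives (SUB) for the pair $x,y$. Verifying (LIN$^+$) is easier: for any $x$, pick an apartment $\varSigma$ containing the short chain $\{x\}$ (or $\{x, (x)^+\}$ after checking it is short, which it is since $(x)^+ \preceq (x)^+$ trivially); then $(x)^+$ computed in $\cal L$ equals $x+{\bf 1}$ computed in $\varSigma \cong \ZZ^n$, so (LIN$^{+{\bf 1}}$) on $\varSigma$ yields $g((x)^+) = g(x) + \alpha_\varSigma$. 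The constant $\alpha_\varSigma$ is independent of $\varSigma$: any two apartments overlap (e.g.\ they share some element, or use (B2) to transport), so the constants agree, giving a global $\alpha$.

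The main obstacle is the bookkeeping around short versus general chains: (B1) as literally stated in Lemma~\ref{lem:skeleton} only covers short chains, so the cleanest route is to note that for the pair $x,y$ we may first translate by a power of the ascending operator so that $x\wedge y \preceq x,y \preceq x\vee y \preceq (x\wedge y)^{+}$ fails in general — this cannot be fixed by translation alone. Hence I would instead appeal to the full apartment axiom of the associated Euclidean building (any two vertices lie in a common apartment), which is exactly what~\cite{HH18a} provides, and which is the natural generalization of the fact that in $\ZZ^n$ any two points lie in a common (indeed the unique) apartment $\ZZ^n$ itself. Once the common apartment is in hand, everything reduces termwise to the already-known theory of L-convex functions on $\ZZ^n$.
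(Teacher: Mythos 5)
Your proof is essentially correct, and in particular the ``only if'' direction and the reduction of both (SUB) and (LIN$^+$) to the corresponding facts on $\ZZ^n$ are on the right track. The paper itself treats this lemma as immediate from the observation that apartments are sublattices isomorphic to $\ZZ^n$ which preserve the ascending operator, and gives no written proof, so there is no paper argument to compare against; your write-up supplies the missing details.

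However, the ``main obstacle'' you flag is illusory, and realizing this removes the only shaky part of your argument. You worry that (B1) of Lemma~\ref{lem:skeleton} applies only to \emph{short} chains and hence cannot directly furnish an apartment containing both $x$ and $y$, and you resort to a more elaborate appeal to the full building axioms and gallery connectivity. But (B1) takes \emph{two arbitrary} short chains $C, D$ --- they need not share a base point --- and the singleton chains $C = \{x\}$ and $D = \{y\}$ are vacuously short (a chain of length $0$ trivially satisfies $x_0 \preceq (x_0)^+$). So (B1) alone yields a $\ZZ^n$-skeleton $\varSigma$ containing $x$ and $y$; being a sublattice, $\varSigma$ then automatically contains $x \wedge y$ and $x \vee y$, and on $\varSigma$ these coincide with $\min$ and $\max$. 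Your longer detour through ``the standard apartment axiom for buildings'' and ``geodesic galleries'' is not wrong, but it is unnecessary: what you end up invoking is exactly (B1) specialized to singletons.

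One small further repair for (LIN$^+$): you claim ``any two apartments overlap'' to argue the per-apartment constant $\alpha_\varSigma$ is global, but that is not generally true. The clean fix is to fix once and for all a point $x_0$ with $g(x_0) < \infty$, set $\alpha := g((x_0)^+) - g(x_0)$, and then for arbitrary $x$ apply (B1) to the singleton short chains $\{x_0\}$ and $\{x\}$ to get an apartment $\varSigma$ containing both. On $\varSigma$ the L-convexity of $g|_\varSigma$ forces $r_\varSigma = g((x_0)^+) - g(x_0) = \alpha$, whence $g((x)^+) = g(x) + \alpha$. With these two adjustments your argument is complete and elementary, relying only on Lemma~\ref{lem:skeleton} as stated.
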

The optimality criterion (Lemma~\ref{lem:opt}) is generalized as follows. 
\begin{Lem}\label{lem:opt'}
	Let $g:{\cal L} \to \RR \cup \{\infty\}$ be an L-convex function.
	A point $x \in {\cal L}$ is a minimizer of $g$ if and only if
	\begin{equation}
	g(x) \leq g(y) \quad (y \in  [x, (x)^+]).
	\end{equation} 
\end{Lem}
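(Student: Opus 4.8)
The plan is to reduce the statement to the already-proved optimality criterion on $\ZZ^n$ (Lemma~\ref{lem:opt}) by restricting $g$ to a single apartment. The ``only if'' direction is immediate, so all the content is in ``if''. Suppose $g(x) \le g(y)$ for every $y \in [x,(x)^+]$; as on $\ZZ^n$ we take $\alpha = 0$ in (LIN$^+$), which is anyway necessary for a minimizer to exist. Fix an arbitrary $z \in {\cal L}$; it suffices to prove $g(x) \le g(z)$, and then $z$ being arbitrary gives the claim.

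First I would note that a one-element chain is short, since $(w)^+ \succeq w$ for every $w \in {\cal L}$; hence (B1) of Lemma~\ref{lem:skeleton}, applied to the short chains $\{x\}$ and $\{z\}$, produces an apartment $\varSigma$ of the fixed apartment system with $x,z \in \varSigma$. Choose an order isomorphism $\phi\colon\varSigma \to \ZZ^n$. Since on $\varSigma$ the ascending operator of ${\cal L}$ agrees with that of $\varSigma$, and the latter corresponds under $\phi$ to $v \mapsto v + {\bf 1}$, we get $\phi((w)^+) = \phi(w) + {\bf 1}$ for all $w \in \varSigma$; in particular $(x)^+ \in \varSigma$ and $\phi$ maps the interval $[x,(x)^+]\cap\varSigma$ order-isomorphically onto $\{\,\phi(x)+u \mid u \in \{0,1\}^n\,\}$. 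Now Lemma~\ref{lem:restriction} tells us that $h := g\circ\phi^{-1}$ is L-convex on $\ZZ^n$, and for each $u \in \{0,1\}^n$ the hypothesis gives $h(\phi(x)) = g(x) \le g\bigl(\phi^{-1}(\phi(x)+u)\bigr) = h(\phi(x)+u)$, because $\phi^{-1}(\phi(x)+u) \in [x,(x)^+]$. By Lemma~\ref{lem:opt}, $\phi(x)$ is therefore a global minimizer of $h$, so $g(x) = h(\phi(x)) \le h(\phi(z)) = g(z)$, as required.

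The argument is essentially bookkeeping once one exploits that both L-convexity and the ascending operator are ``apartment-local'' (Lemma~\ref{lem:restriction} together with the definition of a $\ZZ^n$-skeleton), so that the $\ZZ^n$ theory does all the work; the only building-theoretic input is (B1), guaranteeing that $x$ and $z$ lie in a common apartment. I do not anticipate a real obstacle here — the single point needing care is that the interval $[x,(x)^+]$ computed inside the chosen apartment matches exactly the $\{0,1\}^n$-box at $\phi(x)$, which is precisely what lets the local condition of Lemma~\ref{lem:opt} be fed the hypothesis we are given.
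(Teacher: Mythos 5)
Your proof is correct and uses essentially the same approach as the paper: pass to an apartment containing $x$ and a second point via (B1) of Lemma~\ref{lem:skeleton}, observe that the ascending operator is apartment-local so that $[x,(x)^+]\cap\varSigma$ is exactly the box $\{\phi(x)+u\mid u\in\{0,1\}^n\}$, and invoke Lemmas~\ref{lem:restriction} and \ref{lem:opt}. The only (minor) difference is that the paper argues by contrapositive through a global minimizer $y^*$, whereas you compare $x$ directly against an arbitrary $z$; your version is slightly cleaner since it does not tacitly presuppose that a global minimizer exists.
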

\begin{proof}
	Suppose that $x$ is not a minimizer.
	Consider a minimizer $y^*$ of $g$.
	Choose an apartment $\varSigma$ containing $x$ and $y^*$.
	By $\varSigma \simeq \ZZ^n$ and Lemmas~\ref{lem:opt} and \ref{lem:restriction},
	there is $y \in [x,x+{\bf 1}] \subseteq [x,(x)^+]$ with $g(y) < g(x)$.
\end{proof}
The steepest descent algorithm is formulated as follows.
\begin{description}
	\item[Steepest Descent Algorithm (SDA$({\cal L})$)]
	\item[Input:] An L-convex function $g:{\cal L} \to \RR \cup \{\infty\}$.
	\item[Output:] A minimizer of $g$.
	\item[Step 0.] Choose $x^0 \in {\cal L}$ with $g(x^0) < \infty$. Let $i := 0$. 
	\item[Step 1.] Find a minimizer $y$ of $g$ over $[x^i, (x^i)^+]$. 
	\item[Step 2.] If $g(y) < g(x^i)$, then let $i \leftarrow i+1$, $x^{i} \leftarrow y$, and go to step 1. 
	\item[Step 3.] Otherwise, output $x^i$.
\end{description}
The interval $[x^i, (x^i)^+]$ is a complemented modular lattice, 
and $g$ is submodular on $[x^i, (x^i)^+]$.
In particular, Step 1 reduces to a submodular function minimization on the complemented modular lattice.
In the building-theoretic view,  
$[x^i, (x^i)^+]$ is the spherical building at the link of point $x^i$, 
and Step 1 is an optimization on the spherical building.

To generalize the iteration bound (Theorem~\ref{thm:bound_ZZ^n}), 
we introduce the $l_{\infty}$-distance on ${\cal L}$.  
For two elements $x,y \in {\cal L}$, 
choose an apartment $\varSigma$ containing $x,y$, identify $\varSigma$ 
with $\ZZ^n$, and define
the {\em $l_{\infty}$-distance} 
$d_{\infty}(x,y)$ by
\begin{equation}
d_{\infty}(x,y) := \|x - y\|_{\infty}. 
\end{equation}
One can see from the property (B2) in Lemma~\ref{lem:skeleton} that $d_{\infty}$ is 
independent of the choice of the apartment.
\begin{Thm}\label{thm:bound_L}
	Let $g:{\cal L} \to \RR \cup \{\infty\}$ be an L-convex function, 
	and let $k$ be the minimum $l_{\infty}$-distance between $x^0$ 
	and minimizers $y^* \succeq x^0$, i.e.,
	\begin{equation}
	k := \min \{ d_{\infty}(x^0,y^*) \mid \mbox{$y^*$ is a minimizer of $g$ with $y^* \succeq x^0$ }   \}.
	\end{equation}
	In SDA$({\cal L})$, the $k$-th point $x^k$ is a minimizer of $g$.
\end{Thm}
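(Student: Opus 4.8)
The plan is to reduce Theorem~\ref{thm:bound_L} to the already-established $\ZZ^n$ case (Theorem~\ref{thm:bound_ZZ^n}) by working inside a single apartment. First I would fix a minimizer $y^*$ of $g$ with $y^* \succeq x^0$ attaining the minimum $l_\infty$-distance $k = d_\infty(x^0, y^*)$, and choose an apartment $\varSigma$ containing both $x^0$ and $y^*$ (using (B1) of Lemma~\ref{lem:skeleton}, applied to the trivial short chains at $x^0$ and $y^*$). Identify $\varSigma$ with $\ZZ^n$ via an order- and ascending-operator-preserving isomorphism; under this identification $y^* \geq x^0$ and $\|x^0 - y^*\|_\infty = k$. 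By Lemma~\ref{lem:restriction} the restriction $g|_\varSigma$ is L-convex on $\ZZ^n$, and $y^*$ is certainly a minimizer of $g|_\varSigma$ (being a global minimizer of $g$). Running SDA$(\ZZ^n)$ on $g|_\varSigma$ from $x^0$, Theorem~\ref{thm:bound_ZZ^n} tells us the $k$-th iterate is a minimizer of $g|_\varSigma$; but one must argue this iterate is in fact a global minimizer of $g$, not merely a minimizer on the apartment.

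The crux of the argument is therefore to show that the iterates produced by SDA$({\cal L})$ on $g$ can be taken to coincide with the iterates of SDA$(\ZZ^n)$ on $g|_\varSigma$ — or at least that they descend at least as fast. The key point is that a steepest direction in SDA$(\ZZ^n)$ at the current point $x^i$ (a minimizer of $g|_\varSigma$ over $x^i + \{0,1\}^n$) is, by Lemma~\ref{lem:opt'}'s local characterization, also a valid choice in Step 1 of SDA$({\cal L})$ provided it is a genuine minimizer of $g$ over the full interval $[x^i,(x^i)^+]$. This need not be literally true since $[x^i,(x^i)^+]$ is much larger than $[x^i, x^i + {\bf 1}] \cap \varSigma$. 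The standard resolution — and the step I expect to be the main obstacle — is to prove a ``monotonicity of steepest directions'' lemma in the lattice setting: if $y^* \succeq x^i$ is a minimizer, then the steepest direction $y$ chosen in $[x^i,(x^i)^+]$ (taken minimal in the appropriate sense, e.g.\ the lexicographically/componentwise smallest steepest descent point, which exists by submodularity of $g$ on the complemented modular lattice $[x^i,(x^i)^+]$) satisfies $x^{i} \preceq y \preceq y^*$ and $d_\infty(y, y^*) \leq d_\infty(x^i,y^*) - 1$. Granting this, after $k$ steps SDA$({\cal L})$ reaches a point $x^k$ with $d_\infty(x^k, y^*) = 0$, i.e.\ $x^k = y^*$, hence a minimizer.

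To prove this monotonicity lemma I would argue inside an apartment $\varSigma^i$ containing $x^i$ and $y^*$: by Lemma~\ref{lem:restriction}, $g|_{\varSigma^i}$ is L-convex on $\ZZ^n$, and the minimal steepest direction of $g|_{\varSigma^i}$ at $x^i$ lies below $y^*$ and strictly decreases the $l_\infty$-distance to $y^*$ — this is exactly the content of (the proof of) Theorem~\ref{thm:bound_ZZ^n} in \cite{MurotaShioura14}, which rests on the fact that the minimal steepest descent direction $u \in \{0,1\}^n$ of an L-convex function on $\ZZ^n$ is monotone with respect to the starting point. The remaining technical work is to check that the minimal steepest direction of $g$ over the whole link interval $[x^i,(x^i)^+]$, computed as the minimum-penalty point of a submodular function on the complemented modular lattice, agrees with the one computed on the apartment — this uses that submodular minimization on a modular lattice has a well-defined minimal minimizer, together with the apartment axiom (B2) to transport between different apartments through $x^i$. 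Once the monotonicity lemma is in place, the induction on $k$ closes the proof exactly as in the $\ZZ^n$ case.
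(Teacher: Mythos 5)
There is a genuine gap, and it sits exactly where you flagged the ``main obstacle.'' Your plan fixes a single apartment $\varSigma$ containing $x^0$ (or $x^i$) and $y^*$ \emph{before} the steepest descent step is taken, and then tries to argue that the SDA$({\cal L})$ iterate can be made to lie in $\varSigma$. But SDA$({\cal L})$ minimizes $g$ over the full link $[x^i,(x^i)^+]$, a complemented modular lattice, whereas $[x^i,(x^i)^+]\cap\varSigma$ is only a Boolean sublattice of it; the point returned by Step~1 of SDA$({\cal L})$ has no reason to land inside $\varSigma$. Your proposed rescue has two problems. First, the theorem is stated for \emph{any} run of SDA$({\cal L})$, not for the run that deliberately selects the componentwise-minimal steepest descent point, so proving a monotonicity lemma for the minimal choice would only establish a weaker statement. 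Second, and more seriously, the claim that ``the minimal steepest direction of $g$ over the whole link interval $[x^i,(x^i)^+]$ \dots\ agrees with the one computed on the apartment'' is unsubstantiated and, in general, false: the unique minimal minimizer of a submodular function on a complemented modular lattice can lie entirely outside a given Boolean sublattice, and (B2) only provides order-preserving bijections between apartments through a \emph{common} short chain, which here cannot simultaneously contain $x^{i+1}$ and $y^*$.

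The resolution used in the paper reverses the order of quantification: take the SDA$({\cal L})$ step $x^0\to x^1$ \emph{first}, note that $\{x^0,x^1\}$ is a short chain (because $x^1\in[x^0,(x^0)^+]$), and then invoke (B1) of Lemma~\ref{lem:skeleton} with the two short chains $\{x^0,x^1\}$ and $\{y^*\}$ to obtain an apartment $\varSigma$ containing all three points. Under the identification $\varSigma\simeq\ZZ^n$, the point $x^1$ lies in $x^0+\{0,1\}^n$ and minimizes $g$ over the full $[x^0,(x^0)^+]\supseteq x^0+\{0,1\}^n$, so $x^0\to x^1$ is a legal SDA$(\ZZ^n)$ step for the L-convex function $g|_\varSigma$ (Lemma~\ref{lem:restriction}); moreover, since $y^*$ globally minimizes $g$, it also minimizes $g|_\varSigma$. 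Theorem~\ref{thm:bound_ZZ^n} (more precisely, the one-step distance-decrease that drives its proof in \cite{MurotaShioura14}) then shows the $l_\infty$-distance from $x^1$ to minimizers of $g|_\varSigma$ above $x^1$ is at most $k-1$; since those minimizers are global minimizers of $g$ and $d_\infty$ is apartment-independent by (B2), the distance on ${\cal L}$ also drops to at most $k-1$, and induction finishes the proof. No monotonicity lemma and no matching of minimal minimizers across lattices is needed.
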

\begin{proof}
	We may suppose that a minimizer exists.
	It suffices to show that the distance $k$ decreases by $1$ on the update $x^0 \to x^1$. 
	Let $y^*$ be a minimizer of $g$ with $y^* \succeq x_0$ and $d_{\infty}(x^0,y^*) = k$.
	Choose apartment $\varSigma$ containing $y^*$ and short chain $\{ x^0,x^1\}$.
	Identify $\varSigma$ with $\ZZ^n$. Then $x^0$ is not a minimizer of 
	$g$ over $\varSigma$.
	Then the update $x^0 \to x^1$ is viewed as the update of 
	the first iteration of SDA$(\ZZ^n)$.
	Thus, by Theorem~\ref{thm:bound_ZZ^n}, the distance decreases on $\varSigma$ 
	and on ${\cal L}$.
\end{proof}

Let $\FF$ be a skew field, and let $\FF(t)$ be the skew field of rational functions. 
For (nonzero) $A \in \FF(t)^{n \times n}$, 
define $\deg A
: {\cal L}_{\rm L}(\FF(t)^n) \times {\cal L}_{\rm R}(\FF(t)^n) \to \ZZ$ by 
\begin{equation}
\deg A(L,M) := \max \{ \deg u \mid u \in A(L,M) \}. 
\end{equation} 
Notice that if $L = \langle P \rangle_{\rm L}$ and $M = \langle Q \rangle_{\rm R}$
then $\deg A(L,M)$ is equal to the maximum degree of an entry of $PAQ$.
An affine analogue of Lemma~\ref{lem:submo} is the following:
\begin{Lem}\label{lem:L-convexity}
	Let $A \in \FF(t)^{n \times n}$.
	Then the function $(L,M) \mapsto \infty \cdot \deg A(L,M)$ is L-convex on
	${\cal L}_{\rm L}(\FF(t)^n) \times \check{\cal L}_{\rm R}(\FF(t)^n)$, 
	where $\infty \cdot c$ is defined as $\infty$ 
	if $c > 0$ and $0$ if $c \leq 0$.
\end{Lem}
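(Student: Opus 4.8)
The plan is to reduce the claim to an elementary statement about $\ZZ^{2n}$ via the apartment characterization of L-convexity. First I note that ${\cal L}_{\rm L}(\FF(t)^n)\times\check{\cal L}_{\rm R}(\FF(t)^n)$ is a uniform modular lattice of uniform-rank $2n$, and I fix the apartment system consisting of the products $\varSigma_{\rm L}(P)\times\check\varSigma_{\rm R}(Q)$ over all nonsingular $P,Q\in\FF(t)^{n\times n}$; that this family is an apartment system is readily checked by projecting a short chain of the product to its two factors and invoking Lemma~\ref{lem:apartment} (and its left analogue) in each. By Lemma~\ref{lem:restriction} it then suffices to prove that the restriction of $(L,M)\mapsto\infty\cdot\deg A(L,M)$ to each such apartment is L-convex on $\ZZ^{2n}$.

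So I would fix nonsingular $P,Q$ and identify the apartment with $\ZZ^n\times\ZZ^n$ via $(y,z)\mapsto(L,M):=(\langle(t^{y})P\rangle_{\rm L},\langle Q(t^{-z})\rangle_{\rm R})$, the minus sign on $z$ being forced because the order on $\check{\cal L}_{\rm R}$ is reversed; by Lemma~\ref{lem:apartment} this is an order isomorphism intertwining the ascending operators. Writing $PAQ=:(b_{ij})$ and using that powers of $t$ are central in $\FF(t)$, the $(i,j)$-entry of $(t^{y})(PAQ)(t^{-z})$ is $t^{\,y_i-z_j}b_{ij}$, of degree $y_i-z_j+\deg b_{ij}$. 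By the observation just before the lemma, $\deg A(L,M)$ is the largest of these degrees, so, with $S:=\{(i,j):b_{ij}\ne0\}$ (nonempty since $A\ne0$ and $P,Q$ are nonsingular) and $c_{ij}:=\deg b_{ij}$, the restricted function equals $\infty\cdot\max_{(i,j)\in S}(y_i-z_j+c_{ij})$, i.e.\ the $\{0,\infty\}$-valued indicator $\delta_D$ of
\[
D:=\{(y,z)\in\ZZ^n\times\ZZ^n\mid y_i-z_j\le -c_{ij}\ \text{for all}\ (i,j)\in S\}.
\]

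Finally I would check that $\delta_D$ is L-convex on $\ZZ^{2n}$, which amounts to verifying that $D$ is nonempty, invariant under translation by $\pm\mathbf 1$, and a sublattice; then $\delta_D$ satisfies (SUB$^{\ZZ}$) trivially and (LIN$^{+\mathbf 1}$) with $r=0$, and Lemma~\ref{lem:restriction} finishes the proof. Nonemptiness and $\mathbf 1$-invariance are immediate, since the constraints only involve the differences $y_i-z_j$. The sublattice property is the one point that really uses the structure: given $(y,z),(y',z')\in D$ and $(i,j)\in S$, assuming without loss of generality $z_j\le z_j'$ gives $\min(y_i,y_i')-\min(z_j,z_j')\le y_i-z_j\le -c_{ij}$, so the meet lies in $D$, and the join is handled symmetrically. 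This is also where passing to $\check{\cal L}_{\rm R}$ (rather than ${\cal L}_{\rm R}$) is essential — it is the combination $y_i-z_j$, not $y_i+z_j$, that makes $D$ closed under meet and join. I do not anticipate a genuine obstacle here; the only thing to be careful about is the reversed order on $\check{\cal L}_{\rm R}$ and the resulting sign, which then does all the work.
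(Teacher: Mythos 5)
Your proof is correct and takes essentially the same route as the paper: reduce to apartments via Lemmas~\ref{lem:restriction} and~\ref{lem:apartment}, identify the apartment $\varSigma_{\rm L}(P)\times\check\varSigma_{\rm R}(Q)$ with $\ZZ^n\times\ZZ^n$ (with the sign flip on the second factor coming from the opposite order), and compute the restricted function as depending only on the differences $y_i-z_j+\deg b_{ij}$. The one cosmetic difference is the final step: the paper concludes by citing the standard L-convex form $\sum_{i,j}\phi_{ij}(x_i-x_j)$ from Example~\ref{ex:L-convex_ex}, whereas you verify directly that the feasible region $D$ is a sublattice of $\ZZ^{2n}$ invariant under $\pm\mathbf{1}$-translation, which amounts to the same thing.
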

Here ${\cal L}_{\rm L}(\FF(t)^n) \times \check{\cal L}_{\rm R}(\FF(t)^n)$ 
is viewed as a uniform modular lattice with ascending operator 
$(L,M) \mapsto (tL,t^{-1}M)$.
\begin{proof}
	By Lemmas~\ref{lem:restriction} and \ref{lem:apartment}, 
	it suffices to show the L-convexity 
	on the apartment $\varSigma_{\rm L}(P) \times \check\varSigma_{\rm R}(Q)$ of ${\cal L}_{\rm L}(\FF(t)^n) \times \check{\cal L}_{\rm R}(\FF(t)^n)$.
	Suppose that the row vectors of $P$ are $p_1,p_2, \ldots, p_n$ and column vectors of $Q$ are $q_1,q_2, \ldots, q_n$. Then the apartment
	$\varSigma_{\rm L}(P) \times \check\varSigma_{\rm R}(Q)$ is isomorphic to 
	$\ZZ^{n} \times \ZZ^n$ by
	$(z,w) \mapsto (\langle (t^z) P \rangle_{\rm L}, \langle Q(t^{-w}) \rangle_{\rm R})$.
	Now $\deg A(\langle (t^z) P \rangle_{\rm L}, \langle Q(t^{-w}) \rangle_{\rm R}) = \max_{1 \leq i,j \leq n} \deg (t^{z_i} p_i  A t^{- w_i} q_j) 
	= \max_{1 \leq i,j \leq n} z_i - w_j + \deg p_i Aq_j$.
	Thus 
	\[
	\infty \cdot \deg A(\langle (t^z) P \rangle_{\rm L}, \langle Q(t^{-w}) \rangle_{\rm R}) = \sum_{1 \leq i,j \leq n}  \infty \cdot ( z_i - w_j + \deg p_i Aq_j).
	\]
	Notice that $x \mapsto \infty \cdot (x + b)$ is convex on $\ZZ$.
	By Example~\ref{ex:L-convex_ex} (with $N = 2n$),  
	 $(z,w) \mapsto \infty \cdot \deg A(\langle (t^z) P \rangle_{\rm L}, \langle Q(t^{-w}) \rangle_{\rm R})$
	 is L-convex on $\ZZ^n \times \ZZ^n = \ZZ^{2n}$.
	This means that $\infty \cdot \deg A$ is L-convex on 
	${\varSigma}_{\rm L}(P) \times \check{\varSigma}_{\rm R}(Q)$.
\end{proof}

\section{Computing the degree of determinants}\label{sec:computing}

The goal of this section is to establish a formula and algorithm for the degree 
of the Dieudonn\'e determinant of a linear symbolic matrix.
Let $A = A_0 + A_1 x_1 + \cdots + A_m x_m$ be a linear matrix over $\KK(t)$.
Now $A$ is viewed as a matrix over the skew field $\KK(\langle x_1,x_2,\ldots,x_m \rangle) (t)$ of rational functions over the free field $\KK(\langle x_1,x_2,\ldots,x_m \rangle)$.
As in the case of the nc-rank, 
we first give an upper bound of $\deg \Det A$.
The following upper bound of $\deg \Det A$ 
is observed by Murota~\cite{Murota95_SICOMP} for $\deg \det A$, 
which was a basis of the combinatorial relaxation algorithm.
\begin{Lem}\label{lem:bound_degDet}
For nonsingular matrices $P,Q$ over $\KK(t)$, 
if $P A_i Q$ is a proper matrix over $\KK(t)$ for $i=0,1,2,\ldots,m$, 
then $\deg \Det A \leq - \deg \det P - \deg \det Q$.
\end{Lem}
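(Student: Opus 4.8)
The plan is as follows. Since $P,Q$ are nonsingular over the field $\KK(t)$, Lemma~\ref{lem:DegAB} (applied to $A=P^{-1}(PAQ)Q^{-1}$), together with the facts that $\KK(t)$ is commutative so that $\Det=\det$ on matrices over it and that $\deg\det P^{-1}=-\deg\det P$, gives
\begin{equation*}
\deg \Det A = -\deg\det P + \deg \Det (PAQ) - \deg\det Q .
\end{equation*}
Hence it suffices to prove $\deg \Det (PAQ) \le 0$, and I would do this by showing that $B := PAQ$ is a \emph{proper} matrix over the rational function skew field $\FF(t)$ with $\FF = \KK(\langle x_1,\ldots,x_m\rangle)$, and then invoking the first part of Lemma~\ref{lem:key}.

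First I would record that $B$ is again a linear matrix, namely
\begin{equation*}
B = PAQ = B_0 + B_1 x_1 + \cdots + B_m x_m, \qquad B_i := PA_iQ .
\end{equation*}
Indeed, every entry of $P$ and $Q$ lies in $\KK(t)$, and each $x_i$ commutes with every element of $\KK(t)$ (because $t$ commutes with $x_i$ and $\KK$ is central), so $P(A_ix_i)Q = (PA_i)x_iQ = (PA_iQ)x_i$ entrywise. By hypothesis each $B_i=PA_iQ$ is proper over $\KK(t)$, i.e.\ has all entries in $\KK(t)^-$.

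The key step is then the observation that $B$ is proper over $\FF(t)$. Each entry of $B$ is a finite sum of terms $c$ or $c\,x_i$ with $c \in \KK(t)^- \subseteq \FF(t)^-$; inside $\FF(t)$ one has $\deg x_i = 0$ (an element of $\FF$ is a constant in $t$) and $\deg c \le 0$, so $\deg(c\,x_i) = \deg c + \deg x_i \le 0$, and since $\deg$ is a valuation on $\FF(t)$ the whole sum again has degree $\le 0$. Thus every entry of $B$ lies in $\FF(t)^-$. If $A$ is singular then $\deg\Det A = -\infty$ and the asserted inequality is trivial; otherwise $B$ is a nonsingular proper matrix over $\FF(t)$ (as $P,Q$ are nonsingular), so Lemma~\ref{lem:key} yields $\deg \Det B \le 0$, which combined with the displayed identity completes the proof.

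I do not expect a genuine obstacle here: the only point requiring thought is the middle step, i.e.\ recognizing that ``a linear matrix whose coefficient matrices are proper over $\KK(t)$'' is literally ``a proper matrix over the larger skew field $\FF(t)$'', after which the statement is a direct application of Lemmas~\ref{lem:key} and \ref{lem:DegAB}. The secondary technical points to state carefully are the commutation $x_iQ=Qx_i$ for $Q$ over $\KK(t)$ and the ultrametric inequality $\deg(a+b)\le\max\{\deg a,\deg b\}$ on $\FF(t)$.
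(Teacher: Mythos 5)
Your proposal is correct and follows essentially the same route as the paper's proof: factor $\deg\Det A$ via the multiplicativity of $\Det$, observe that $\deg\det = \deg\Det$ for the commutative matrices $P,Q$, verify that $PAQ$ is proper over $\KK(\langle x\rangle)(t)$, and apply the first part of Lemma~\ref{lem:key}. The only difference is that you carefully spell out (via the commutation $x_iQ = Qx_i$ and the ultrametric inequality for $\deg$ on $\FF(t)$) the step that the paper states without elaboration, namely that $PAQ$ is proper over $\FF(t)$.
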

\begin{proof}
	$P A Q$ is a proper matrix over $\KK(\langle x \rangle) (t)$. 
	Also $\deg \det P = \deg \Det P$ and $\deg \det Q = \deg \Det Q$.
	Thus the claim follows from Lemmas~\ref{lem:DetAB} and \ref{lem:key}.
\end{proof}
This gives rise to the following optimization problem 
({\em maximum vanishing submodule problem (MVMP)\footnote{The second M means subModule.}}):
\begin{eqnarray*}
{\rm MVMP:} \quad {\rm Max.} && \deg \det P + \deg \det Q \\
{\rm s.t.} &&   \mbox{$P A_i Q$: proper} \quad (i =0,1,\ldots,m), \\
&& P,Q \in \KK(t)^{n \times n}: \mbox{nonsingular}.
\end{eqnarray*}
Just as 
MVSP is formulated as an optimization 
over the lattice of vector subspaces of $\KK^n$ (see (\ref{eqn:MVSP})), 
MVMP is reformulated as an optimization 
over the lattice of submodules of $\KK(t)^n$. 
Recall notions in Section~\ref{subsec:uniform}.
Then the above problem is rephrased as the following:
\begin{eqnarray*}
	{\rm MVMP:} \quad {\rm Max.} && \deg L + \deg M \\
	{\rm s.t.} &&   \deg A_i(L,M) \leq 0 \quad (i =0,1,\ldots,m), \\
	&& L \in {\cal L}_{\rm L}(\KK(t)^n),\ M \in {\cal L}_{\rm R}(\KK(t)^n).
\end{eqnarray*}
The following theorem states that this upper bound is tight for $\deg \Det$, which is an extension of 
the Fortin-Rautenauer formula (Theorem~\ref{thm:FortinReutenauer}).
The proof is given later.
\begin{Thm}\label{thm:degDet_formula}
	Let $A = A_0 + A_1 x_1 + A_2 x_2 + \cdots + A_m x_m$ be an $n \times n$ linear matrix over $\KK(t)$. 
	Then $\deg \Det A$ is equal to the negative of 
	the optimal value of MVMP.
\end{Thm}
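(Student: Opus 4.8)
The strategy is to prove the two inequalities between $\deg\Det A$ and $-\opt(\mathrm{MVMP})$ separately. The inequality $\deg\Det A\le -\opt(\mathrm{MVMP})$ is exactly the weak duality of Lemma~\ref{lem:bound_degDet}: for any feasible pair $(L,M)=(\langle P\rangle_{\rm L},\langle Q\rangle_{\rm R})$ of MVMP, properness of each $PA_iQ$ gives $\deg\Det A\le -\deg\det P-\deg\det Q=-\deg L-\deg M$. Since MVMP is feasible (e.g.\ $P=t^{-N}I$, $Q=I$ with $N$ large), its objective is integer-valued, and it is bounded above by $-\deg\Det A$ whenever $A$ is nonsingular over $\KK(\langle x\rangle)(t)$, an optimal solution $(P,Q)$ exists in that case; the singular case will be dealt with by the same improvement step below, both sides being $+\infty$. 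So it remains to show that at an optimal $(P,Q)$ one has $\deg\det P+\deg\det Q=-\deg\Det A$.

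\textbf{Localization.} Write the proper matrix $PAQ$ over $\KK(\langle x\rangle)(t)$ as $PAQ=(PAQ)^0+t^{-1}(\cdots)$. Because $P,Q$ involve no $x_i$ and each $x_i$ commutes with $t$, the ``constant term'' is the linear matrix $(PAQ)^0=B_0+B_1x_1+\cdots+B_mx_m$ over $\KK$ with $B_i:=(PA_iQ)^0\in\KK^{n\times n}$. By Lemma~\ref{lem:key}, $\deg\Det(PAQ)=0$ if and only if $(PAQ)^0=B$ is nonsingular over $\KK(\langle x\rangle)$, i.e.\ $\ncrank B=n$; and by Lemma~\ref{lem:DegAB} (using $\deg\Det P=\deg\det P$, $\deg\Det Q=\deg\det Q$ as $\KK(t)$ is a field) this is equivalent to $\deg\Det A=-\deg\det P-\deg\det Q$. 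Thus the theorem reduces to the claim: \emph{at an optimal $(P,Q)$ the linear matrix $B=\sum_iB_ix_i$ over $\KK$ satisfies $\ncrank B=n$.}

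\textbf{Improvement step.} Suppose $\ncrank B<n$. By the Fortin--Reutenauer formula (Theorem~\ref{thm:FortinReutenauer}) applied to $B$, there are nonsingular $S,T\in\KK^{n\times n}$ such that, after absorbing row/column permutations into $S,T$, each $SB_iT$ has a zero block in its top-left $r\times s$ corner with $r+s>n$. Hence each $S(PA_iQ)T=SB_iT+t^{-1}(\cdots)$ has all entries of its $r\times s$ corner of degree $\le-1$ while all other entries keep degree $\le 0$. Put $P':=(t^z)SP$ and $Q':=QT(t^w)$ with $z_p=1$ for $p\le r$ and $0$ otherwise, $w_q=-1$ for $q>s$ and $0$ otherwise. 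A four-block entrywise degree count shows every $P'A_iQ'$ is still proper, so $(P',Q')$ is feasible, and $\deg\det P'+\deg\det Q'=\deg\det P+\deg\det Q+(r+s-n)>\deg\det P+\deg\det Q$, contradicting optimality. The same computation shows that if $A$ is singular (so that a finite optimum, forcing $\ncrank B=n$, would make $PAQ$ biproper and hence $A$ nonsingular, a contradiction) the MVMP objective is unbounded, so $\opt(\mathrm{MVMP})=+\infty=-\deg\Det A$. Therefore $\ncrank B=n$ at an optimum, and $\deg\Det A=-\deg\det P-\deg\det Q=-\opt(\mathrm{MVMP})$.

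\textbf{Remarks on the argument.} Equivalently, one may phrase this building-theoretically: via Lemma~\ref{lem:[L,tL]} the restriction of the penalized MVMP objective to the link $[L,tL]\times[t^{-1}M,M]$ of $(L,M)$ is, up to additive constants, precisely the MVSP objective for $B$ over $\KK$; by the optimality criterion (Lemma~\ref{lem:opt'}) a minimizer of this L-convex function admits no zero block in $B$, which is again Fortin--Reutenauer forcing $\ncrank B=n$. I expect the main obstacle to be exactly the improvement step: one must choose the degree-shift vectors $z,w$ so that \emph{all} $P'A_iQ'$ remain proper, not merely decrease the degree on the corner, while the objective strictly increases; the asymmetric choice ($+1$ on the first $r$ rows of $P'$, $-1$ on the last $n-s$ columns of $Q'$) is what makes the block-by-block degree bookkeeping close, and verifying that the four cases ($p\le r$ or $p>r$, $q\le s$ or $q>s$) all stay $\le 0$ is the routine calculation I would carry out in full.
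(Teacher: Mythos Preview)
Your proof is correct and follows essentially the same route as the paper: weak duality from Lemma~\ref{lem:bound_degDet}, then at a feasible $(P,Q)$ with $(PAQ)^0$ nc-singular you use Fortin--Reutenauer to find a zero $r\times s$ block with $r+s>n$ and push to $(t^{{\bf 1}_{\le r}})SP,\,QT(t^{-{\bf 1}_{>s}})$, which is exactly the paper's update (packaged there as Propositions~\ref{prop:augment_MVMP} and~\ref{prop:opt_MVMP} and the step~3 of the matrix SDA). Your treatment of the singular/unbounded case and the four-block degree check are likewise the same content, just written out more explicitly than the paper's terse proof.
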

Then $\deg \Det$ is an upper bound of $\deg \det$, analogously to 
the relation between $\rank$ and $\ncrank$.
\begin{Cor}\label{cor:deg_leq_Deg}
	$\deg \det A \leq \deg \Det A$.
\end{Cor}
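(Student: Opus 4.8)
The plan is to deduce this directly from Theorem~\ref{thm:degDet_formula}: the dual bound of Lemma~\ref{lem:bound_degDet}, which is exactly what makes the optimal value of MVMP an upper bound for $\deg\Det A$, simultaneously bounds $\deg\det A$ from above. First I would note that the commutative rational function field $\KK(x_1,\dots,x_m)$ is itself a skew field, so over it $\Det$ coincides with $\det$ and all the results of Section~\ref{subsec:Det} apply. If $A$ is singular over $\KK(x_1,\dots,x_m)(t)$, then $\deg\det A = -\infty$ and there is nothing to prove; so I may assume $A$ is nonsingular.

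Let $(P,Q)$ be any feasible solution of MVMP, i.e.\ $P,Q$ are nonsingular over $\KK(t)$ and each $PA_iQ$ is proper over $\KK(t)$. Since $t$ commutes with every $x_i$, I may regard $PAQ = \sum_{i=0}^m (PA_iQ)\,x_i$ as a matrix over $\KK(t)(x_1,\dots,x_m) = \KK(x_1,\dots,x_m)(t)$; because every coefficient matrix $PA_iQ$ has all entries of $t$-degree at most $0$, the same holds for $PAQ$, so $PAQ$ is proper. Lemma~\ref{lem:key} applied with $\FF = \KK(x_1,\dots,x_m)$ then gives $\deg\det PAQ \le 0$, and multiplicativity of the degree of the determinant (Lemma~\ref{lem:DegAB}) yields
\begin{equation*}
\deg\det A = \deg\det PAQ - \deg\det P - \deg\det Q \le -\deg\det P - \deg\det Q .
\end{equation*}
Since this holds for every feasible $(P,Q)$, taking the infimum of the right-hand side and invoking Theorem~\ref{thm:degDet_formula} — the infimum of $-\deg\det P-\deg\det Q$ over feasible $(P,Q)$ equals the negative of the optimal value of MVMP, namely $\deg\Det A$ — gives $\deg\det A \le \deg\Det A$.

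I expect no real obstacle here: the entire content has already been absorbed into Theorem~\ref{thm:degDet_formula}. The only point that deserves a line of justification is that properness of each $PA_iQ$ over $\KK(t)$ is inherited by $PAQ$ over $\KK(x_1,\dots,x_m)(t)$, which is immediate because multiplying the proper coefficient matrices by the commuting variables $x_i$ cannot raise $t$-degrees. In this form the corollary is the weighted counterpart of the elementary inequality $\rank A \le \ncrank A$, with Murota's relaxation bound (Lemma~\ref{lem:bound_degDet}) in place of Lov\'asz's rank bound.
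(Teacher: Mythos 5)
Your proposal is correct and follows essentially the same route as the paper: pass to an (optimal) feasible pair $(P,Q)$ for MVMP, observe that $PAQ$ remains proper over $\KK(x_1,\dots,x_m)(t)$, apply Lemma~\ref{lem:key} to get $\deg\det PAQ\le 0$, and conclude via multiplicativity together with Theorem~\ref{thm:degDet_formula}. Your version just spells out the singular case and the properness transfer a bit more explicitly, and phrases the last step as an infimum over feasible $(P,Q)$ rather than picking the optimal one outright, but the argument is the same.
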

\begin{proof}
	Let $(\langle P \rangle_{\rm L}, \langle Q \rangle_{\rm R})$ be 
	an optimal solution for MVMP. By Theorem~\ref{thm:degDet_formula}, 
	it holds $\deg \Det A = - \deg \det P - \deg \det Q$.
	Now $P A Q$ is also a proper matrix over $\KK(x)(t)$. 
	By Lemma~\ref{lem:key} 
	we have $\deg \det A \leq - \deg \det P - \deg \det Q = \deg \Det A$.
\end{proof}
We also give an algorithm to solve MVMP for a polynomial matrix $A$.
\begin{Thm}\label{thm:degDet_algo}
	Let $A = A_0 + A_1 x_1 + A_2 x_2 + \cdots + A_m x_m$ be an $n \times n$ linear matrix over $\KK[t]$. 
	MVMP can be solved in $O(\ell n \gamma + \ell^2 m n^{\omega+2})$ time, and 
	 in $O((\ell- \alpha_n) \gamma + (\ell - \alpha_n)^2 m n^{\omega})$ time
		if $A$ is nonsingular, where
		\begin{itemize}
			\item $\gamma$ is the time complexity of solving MVSP for an $n \times n$ linear matrix over $\KK$, 
			\item $\ell (= \alpha_1)$ is the maximum degree of entries in $A$, 
			\item $\alpha_n$ is the minimum degree of the Smith-McMillan form of $A$ in $\KK(\langle x\rangle)(t)$, and
			\item $\omega$ is the exponent of the time complexity 
			of matrix multiplication of $n \times n$ matrices. 
		\end{itemize}
\end{Thm}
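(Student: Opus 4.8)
The plan is to realize MVMP as an instance of L-convex function minimization on the uniform modular lattice ${\cal L}_{\rm L}(\KK(t)^n) \times \check{\cal L}_{\rm R}(\KK(t)^n)$ and to run SDA$({\cal L})$ on it, with the steepest-descent subproblem in Step~1 being solved by the MVSP oracle over $\KK$. First I would observe that MVMP is equivalent to minimizing the function $h(L,M) := - \deg L - \deg M + \sum_{i=0}^m \infty \cdot \deg A_i(L,M)$ over ${\cal L}_{\rm L}(\KK(t)^n) \times \check{\cal L}_{\rm R}(\KK(t)^n)$: by Lemma~\ref{lem:L-convexity} each penalty term is L-convex, and $-\deg L - \deg M$ is linear on every apartment (hence L-convex with $\alpha = 0$ after the ascending operator is taken to be $(L,M)\mapsto(tL,t^{-1}M)$, so that $\deg L$ and $\deg M$ change by $+1$ and $-1$ respectively). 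Summing L-convex functions keeps L-convexity, so $h$ is L-convex; by the finiteness hypothesis (there is a feasible $(L,M)$, e.g. take $L = \langle (t^{-\ell})I \rangle_{\rm L}$, $M = \langle I \rangle_{\rm R}$, which makes every $A_i$ proper) a minimizer exists and its value is the negative of the MVMP optimum. Here one must check that the apartment system of Lemma~\ref{lem:apartment} restricted to $\KK(t)$ is genuinely an apartment system of the product lattice; this is routine from Lemma~\ref{lem:skeleton} and the product construction.

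Next I would describe Step~1 concretely. At a current point $(L,M)$ with $L = \langle P \rangle_{\rm L}$, $M = \langle Q \rangle_{\rm R}$, the interval $[(L,M),(tL,t^{-1}M)]$ is, by Lemma~\ref{lem:[L,tL]}(2) applied on each factor, isomorphic to ${\cal S}_{\rm L}(tL/L) \times {\cal S}_{\rm R}(M/t^{-1}M) \cong {\cal S}(\KK^n) \times {\cal S}(\KK^n)$. Writing a candidate as $L \circ X$, $M' \circ Y$ with $X,Y$ vector subspaces of dimension $\dim X = j$, $\dim Y = k$, one has $\deg(L\circ X) = \deg L + j$ and a dual identity for $M$, so the linear part of $h$ contributes $-j-k$; and the key point is that the constraint $\deg A_i(L\circ X, M'\circ Y) \le 0$ translates, after the change of bases $P,Q$, into the vanishing of a constant matrix bilinear form $\tilde A_i(X,Y) = \{0\}$ built from the degree-$\ell$ coefficient data of the $\tilde A_i := PA_iQ$. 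Thus minimizing $h$ over the interval is exactly MVSP (in the formulation~(\ref{eqn:MVSP})) for a suitable $n\times n$ linear matrix over $\KK$, costing $\gamma$ per iteration; a steepest direction $(X,Y)$ is recovered from the mv-subspace, and the new point is $(\langle P S (t^{{\bf 1}_{\le j}}) \rangle_{\rm L}\text{-type}, \langle \cdots \rangle_{\rm R})$ as in Lemma~\ref{lem:[L,tL]}(4). Between iterations one recomputes the relevant coefficient matrices of $PA_iQ$ by matrix multiplication over $\KK$; bounding the polynomial degrees that occur (they stay $O(\ell)$, and $O(\ell-\alpha_n)$ in the nonsingular case, since $\deg L, \deg M$ move monotonically and the Smith-McMillan bound of Proposition~\ref{prop:SmithMcMillan} controls the spread) gives the $\ell m n^{\omega}$ (resp.\ $(\ell-\alpha_n) m n^{\omega}$) cost of the bookkeeping per iteration.

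For the iteration count I would invoke Theorem~\ref{thm:bound_L}: the number of steps is at most $d_\infty(x^0, y^*)$ for the nearest minimizer $y^* \succeq x^0$. Starting from $x^0 = (\langle (t^{-\ell})I\rangle_{\rm L}, \langle I \rangle_{\rm R})$ and using Theorem~\ref{thm:degDet_formula} together with the Smith-McMillan form of $A$ over $\KK(\langle x\rangle)(t)$, the coordinates of an optimal submodule pair differ from those of $x^0$ by at most $\ell$ in general (each $\deg L$-coordinate lies between $-\ell$ and $0$ after normalization), and by at most $\ell - \alpha_n$ when $A$ is nonsingular, since then $\alpha_1 = \ell$ and the exponents $\alpha_k$ of the Smith-McMillan form pin down the optimal apartment coordinates to a window of width $\alpha_1 - \alpha_n = \ell - \alpha_n$. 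Hence there are $O(\ell n)$ iterations in general and $O(\ell - \alpha_n)$ in the nonsingular case, and multiplying by the per-iteration cost $\gamma + \ell m n^{\omega}$ (resp.\ $\gamma + (\ell-\alpha_n) m n^\omega$) and adding the one-time setup cost $O(\ell n \gamma)$-style term yields the claimed bounds $O(\ell n \gamma + \ell^2 m n^{\omega+2})$ and $O((\ell-\alpha_n)\gamma + (\ell-\alpha_n)^2 m n^\omega)$.

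The main obstacle I anticipate is the precise reduction of Step~1 to MVSP over $\KK$, i.e.\ verifying that the local interval problem on the Euclidean building is \emph{exactly} the spherical-building problem MVSP with the right constant matrices, and — more delicately — that using the MVSP oracle (which by Lemma~\ref{lem:innerapprox} returns an mv-subspace that is also optimal for $\overline{\mathrm{MVSP}}$) indeed produces a genuine steepest direction for $h$ rather than merely a feasible descent step; this is where the L-convexity of $h$ (so that the local optimum over the interval is the correct SDA move) and the correctness of the submodular-minimization-over-a-complemented-modular-lattice interpretation of Lemma~\ref{lem:opt'} must be combined carefully. The secondary technical point is the bit/degree-size control needed to keep each $\tilde A_i$'s relevant coefficients extractable by $O(mn^\omega)$ field operations and to justify that the degrees never exceed the Smith-McMillan window.
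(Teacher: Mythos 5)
Your overall strategy is the paper's: run SDA on the L-convex function $h(L,M)=-\deg L-\deg M+\sum_i \infty\cdot\deg A_i(L,M)$, realize Step 1 as MVSP over $\KK$ via Lemma~\ref{lem:[L,tL]} and Lemma~\ref{lem:innerapprox}, and invoke Theorem~\ref{thm:bound_L} with the Smith--McMillan apartment to bound the iteration count in the nonsingular case by $\alpha_1-\alpha_n$; this is exactly Lemma~\ref{lem:iterations}. However, there are two concrete gaps.

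First, your iteration bound in the singular case does not follow from the $d_\infty$-distance argument. When $A$ is singular, $\deg\Det A=-\infty$, so by Theorem~\ref{thm:degDet_formula} MVMP is unbounded and $h$ has no minimizer; there is no $y^*\succeq x^0$ to which Theorem~\ref{thm:bound_L} can be applied. The paper's $n\ell$ bound in this case comes from an entirely different, elementary mechanism: the counter $D^*$ is initialized to $n\ell$ and drops by $r+s-n\geq 1$ in each Step 3, so the algorithm certifies unboundedness (by observing $D^*<0$) within $n\ell$ iterations. Your proposal asserts $O(\ell n)$ iterations without this; the remark that ``each $\deg L$-coordinate lies between $-\ell$ and $0$'' is about a hypothetical minimizer that does not exist in the singular case, and in any event an $l_\infty$-distance of $\ell$ would give $O(\ell)$ iterations, not $O(\ell n)$.

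Second, your per-iteration cost accounting is off by a factor of $n$ in the singular case. You claim the polynomial degrees ``stay $O(\ell)$'', but each update $A_i\leftarrow(t^{{\bf 1}_{\leq r}})SA_iT(t^{-{\bf 1}_{>s}})$ can decrease the minimum entry degree by one, so the number $d$ of stored coefficient matrices $A_i^{(j)}$ in $A_i=A_i^0+A_i^{(1)}t^{-1}+\cdots+A_i^{(d)}t^{-d}$ starts at $\ell$ and grows by one per iteration, reaching $O(n\ell)$ over the $n\ell$ iterations of the singular case. The per-iteration bookkeeping is therefore $O(dmn^\omega)$, and summing this over the iterations gives $O(\ell^2 mn^{\omega+2})$, whereas your fixed ``$\ell m n^\omega$ per iteration'' estimate would only produce $O(\ell^2 mn^{\omega+1})$. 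The same $d$-growth, combined with $K=\ell-\alpha_n$ iterations (and $\alpha_n\le 0$), is what delivers the $O((\ell-\alpha_n)^2 mn^\omega)$ term in the nonsingular bound.
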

Here we make a strong assumption that arithmetic operations on $\KK$ 
can be done in constant time.  
The bit-length consideration of our algorithm for the case $\KK = \QQ$ is left to future work.
On this direction, 
Oki~\cite{OkiHJ19} devised an alternative algorithm for 
$\deg \Det$, which works with a bounded bit-length. 
Theorems~\ref{thm:degDet_formula} and \ref{thm:degDet_algo}  are proved in the subsequent subsections.
\begin{Rem}
	The maximum degree of the subdeterminants of $n \times n'$ linear matrix $A$
	can be computed by combining the above result with 
	the valuated-matroid property (Proposition~\ref{prop:valuated}).
	Indeed, consider the expanded matrix $\tilde A := (I\ A)$ 
	and the valuated matroid $\omega$ obtained from column vectors of $\tilde A$.
	Then the maximum degree of the subdeterminants of $A$ 
	is equal to the maximum value of $\omega(X)$ 
	over $X \subseteq \{1,2,\ldots,n+n'\}$ with $|X| = n$.
	By the greedy algorithm~\cite{DressWenzel_greedy} 
	(see also \cite[Section 5.2.4]{MurotaMatrix}), it is obtained 
	by $O((n+n')^2)$ evaluations of $\omega$, 
	where the evaluation is done by the algorithm in Theorem~\ref{thm:degDet_algo}.
\end{Rem}

\subsection{Optimality}
Here we establish an optimality 
criterion for MVMP, and prove Theorem~\ref{thm:degDet_formula}.
We first note that MVMP can be viewed as L-convex function minimization 
on a uniform modular lattice:
\begin{eqnarray}
{\rm Min.} && - \deg L - \deg M + \sum_{i=0}^m \infty \cdot \deg A_i(L,M) \nonumber \\ 
{\rm s.t.} && (L,M) \in {\cal L}_{\rm L}(\KK(t)^n) \times \check{\cal L}_{\rm R}(\KK(t)^n). \label{eqn:L-convex-min}
\end{eqnarray}
Recall Lemma~\ref{lem:L-convexity} for the notation $\infty \cdot \deg A_i(L,M)$.
Then the objective function is actually L-convex. 
Indeed, by Lemma~\ref{lem:L-convexity}, 
the functions in the summation are L-convex. 
Recall Lemma~\ref{lem:L(F(t)^n)} that  
$\deg$ is a unit valuation on a uniform modular lattice.
Then $(L,M) \mapsto - \deg L - \deg M$ is L-convex on 
${\cal L}_{\rm L}(\KK(t)^n) \times \check{\cal L}_{\rm R}(\KK(t)^n)$ 
with $- \deg t L - \deg t^{-1} M = - \deg L - \deg M$.
Notice from the definition that the sum of L-convex functions is L-convex.

This fact and Lemma~\ref{lem:opt'} motivate us to consider the restriction of MVMP to
interval $[(L,M),(L,M)^+] = [(L,M),(tL,t^{-1}M)]$ for 
a feasible solution $(L, M)$ of MVMP.
Since $\deg A_i (L,M) \leq 0$, it holds $\deg A_i (L',M') \leq 1$ for $(L',M') \in [(L,M),(L,M)^+]$; see Lemma~\ref{lem:[L,tL]}~(4).
To study the feasibility of MVMP on $[(L,M),(L,M)^+]$,
we may consider the coefficient of $t$ in $A_i(tu,v)$ for 
$u \in L, v \in M$.
For each $A_i$, 
define a bilinear map $A_i^{L,M}: tL/L \times M/t^{-1}M  \to \KK$ 
by
\begin{eqnarray*}
A_i^{L,M}(tu+L, v + t^{-1}M) & := & A_i(u,v)^0  \\ 
& = & \mbox{the coefficient of $t$ in $A_i(tu,v)$ }  \quad (u \in L, v \in M).
\end{eqnarray*}
This is well-defined (by $A(L,M) \subseteq \KK(t)^-$).
Define MVSP$^{L,M}$ by
\begin{eqnarray*}
{\rm MVSP}^{L,M}: \quad  {\rm Max.} && \dim X + \dim Y  \\
{\rm s.t.} && A^{L,M}_i(X,Y)  = \{0\} \quad (i=0,1,\ldots,m), \\
&& X \in {\cal S}(tL/L), Y \in {\cal S}(M/t^{-1}M).
\end{eqnarray*}
Recall Lemma~\ref{lem:[L,tL]} for notation $L \circ X$ for $X \in {\cal S}(tL/L)$.
Also, for $Y \in {\cal S}(M/t^{-1}M)$, define $M \bullet Y := t^{-1} M \circ Y$.
Then the following lemma verifies 
that MVSP$^{L,M}$ is the restriction of MVMP to 
$[(L,M),(L,M)^+]  (\simeq {\cal S}(tL/L) \times \check{\cal S}(M/t^{-1} M))$:
\begin{Prop}\label{prop:augment_MVMP}
	Let $(L,M)$ be a feasible solution of MVMP.
	For $(X,Y) \in {\cal S}(tL/L) \times {\cal S}(M/t^{-1}M)$, we have the following:
	\begin{itemize}
		\item[{\rm (1)}] $\deg L \circ X + \deg M \bullet Y = \deg L + \deg M + (\dim X + \dim Y - n)$.
		\item[{\rm (2)}] $(L \circ X, M \bullet Y)$ is feasible to MVMP if and only if 
		$(X,Y)$ is feasible to MVSP$^{L,M}$.
	\end{itemize}
\end{Prop}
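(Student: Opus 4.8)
The plan is to derive both statements by directly translating the definitions of $L\circ X$, $M\bullet Y$ and $A_i^{L,M}$, with Lemma~\ref{lem:[L,tL]} (applied on both the left and the right side, using the remark that the three lemmas on ${\cal L}_{\rm R}(\FF(t)^n)$ hold with $\mathrm{R}$ replaced by $\mathrm{L}$) as essentially the only nontrivial ingredient.

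For part~(1) I would apply Lemma~\ref{lem:[L,tL]}(3) twice. Applied to $L$ and $X$ it gives $\deg L\circ X=\deg L+\dim X$. Applied to the submodule $N:=t^{-1}M$, for which $tN=M$ and hence $tN/N=M/t^{-1}M$, it gives $\deg M\bullet Y=\deg(t^{-1}M\circ Y)=\deg t^{-1}M+\dim Y$. Since $\deg$ is a unit valuation on the uniform modular lattice ${\cal L}_{\rm R}(\KK(t)^n)$ of uniform-rank $n$ (Lemma~\ref{lem:L(F(t)^n)}), the ascending operator $N\mapsto tN$ raises $\deg$ by exactly $n$, so $\deg t^{-1}M=\deg M-n$. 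Adding the two identities yields $\deg L\circ X+\deg M\bullet Y=\deg L+\deg M+(\dim X+\dim Y-n)$.

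For part~(2) I would first record the bookkeeping: $tL=\{tu\mid u\in L\}$ and $M=t(t^{-1}M)$, so every $w\in L\circ X$ has the form $w=tu$ with $u=t^{-1}w\in L$ and $w+L=tu+L\in X$, while $z\in M\bullet Y$ means exactly $z\in M$ with $z+t^{-1}M\in Y$; conversely every coset in $X$ (resp.\ $Y$) is realized this way, since $u\mapsto tu+L$ maps $L$ onto $tL/L$ (proof of Lemma~\ref{lem:[L,tL]}(1)). As $t$ is central in $\KK(t)$ we have $wA_iz=t\,(uA_iz)$, and as $(L,M)$ is feasible for MVMP we have $uA_iz\in\KK(t)^-$; hence $\deg(wA_iz)=1+\deg(uA_iz)\le 0$ holds if and only if the degree-zero coefficient $(uA_iz)^0$ vanishes, which by definition of $A_i^{L,M}$ (and its asserted well-definedness) is exactly $A_i^{L,M}(tu+L,\,z+t^{-1}M)=0$. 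Letting $u,z$ range so that $tu+L$ runs over all of $X$ and $z+t^{-1}M$ over all of $Y$, this shows that ``$\deg A_i(L\circ X,M\bullet Y)\le 0$ for all $i$'' is equivalent to ``$A_i^{L,M}(X,Y)=\{0\}$ for all $i$''; together with $L\circ X\in{\cal L}_{\rm L}(\KK(t)^n)$ and $M\bullet Y\in{\cal L}_{\rm R}(\KK(t)^n)$ (Lemma~\ref{lem:[L,tL]}(2)), this is precisely the claimed equivalence of feasibility.

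I do not expect a genuine obstacle; the proof is a careful unwinding of definitions. The one point demanding attention is the bookkeeping forced by working in the opposite lattice $\check{\cal L}_{\rm R}(\KK(t)^n)$ on the $M$-side --- in particular that its ascending operator $M\mapsto t^{-1}M$ shifts $\deg$ by $-n$ and not by $-1$ --- together with the conversion between ``the coefficient of $t$ in $A_i(tu,v)$'' and the degree inequality $\deg A_i(L',M')\le 0$ that defines MVMP-feasibility.
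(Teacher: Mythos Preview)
Your proposal is correct and follows essentially the same approach as the paper's proof: part~(1) is obtained from Lemma~\ref{lem:[L,tL]}(3) applied on both sides (the paper states this in one line, while you make the shift $\deg t^{-1}M=\deg M-n$ explicit), and part~(2) is the same chain of equivalences unwinding the definitions of $L\circ X$, $M\bullet Y$, and $A_i^{L,M}$. Your extra care in recording that $L\circ X\in{\cal L}_{\rm L}(\KK(t)^n)$ and $M\bullet Y\in{\cal L}_{\rm R}(\KK(t)^n)$ via Lemma~\ref{lem:[L,tL]}(2) is a detail the paper leaves implicit.
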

\begin{proof}
	(1) follows from Lemma~\ref{lem:[L,tL]} (3).
	(2) follows from:
	\begin{eqnarray*}
	&& \deg A_i (L \circ X, M \bullet Y) \leq 0 \\
	&\Leftrightarrow& \deg A_i (tu, v) \leq 0 \quad (u \in L: tu+ L \in X,\ v \in M: v+ t^{-1}M \in Y ) \\
	&\Leftrightarrow& A_i (u, v)^0 = 0 \quad (u \in L: tu+ L \in X,\ v \in M: v+ t^{-1}M \in Y ) \\
	&\Leftrightarrow& A_i^{L,M}(X,Y) = \{0\}.
	\end{eqnarray*}
\end{proof}
%
Suppose that $L$ and $M$ are given as 
$L = \langle P \rangle_{\rm L}$ and $M = \langle Q \rangle_{\rm R}$.
Then MVSP$^{L,M}$ is also written as
\begin{eqnarray*}
	{\rm MVSP}^{P,Q}: \quad  {\rm Max.} && \dim X + \dim Y \\
	{\rm s.t.} && (PA_iQ)^0 (X,Y)  = \{0\} \quad (i=0,1,\ldots,m), \\
	&& X \in {\cal S}(\KK^n), Y \in {\cal S}(\KK^n).
\end{eqnarray*}
Now we have the following optimality criterion.
\begin{Prop}\label{prop:opt_MVMP}
	For a feasible solution $(L,M) = ( \langle P \rangle_{\rm L}, \langle Q \rangle_{\rm R})$ of MVMP, 
	the following conditions are equivalent:
\begin{itemize}
	\item[{\rm (1)}] $(L,M)$ is optimal to MVMP.
	\item[{\rm (2)}] The optimal value of MVSP$^{L,M}$ is at most $n$ (is equal to $n$).
	\item[{\rm (3)}] The linear matrix
	\begin{equation*}
	(PAQ)^0 = (PA_0 Q)^0 + (PA_1 Q)^0 x_1 + \cdots + (PA_m Q)^0 x_m
	\end{equation*}
	is nonsingular on $\KK(\langle x \rangle)$.
	\item[{\rm (4)}] $\deg \Det A$ is equal to $- \deg \det P- \deg \det Q$.
\end{itemize}
\end{Prop}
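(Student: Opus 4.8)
The plan is to prove the chain $(1)\Leftrightarrow(2)\Leftrightarrow(3)\Leftrightarrow(4)$, with a different ingredient for each link: the L-convex optimality criterion (Lemma~\ref{lem:opt'}) together with Proposition~\ref{prop:augment_MVMP} for $(1)\Leftrightarrow(2)$; the Fortin--Reutenauer formula (Theorem~\ref{thm:FortinReutenauer}) for $(2)\Leftrightarrow(3)$; and the multiplicativity of $\deg\Det$ (Lemma~\ref{lem:DegAB}) with the biproperness characterization (Lemma~\ref{lem:key}) for $(3)\Leftrightarrow(4)$.

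For $(1)\Leftrightarrow(2)$ I would argue as follows. By the reformulation~(\ref{eqn:L-convex-min}), MVMP is the minimization of the L-convex function $F(L,M):=-\deg L-\deg M+\sum_{i=0}^m\infty\cdot\deg A_i(L,M)$ on ${\cal L}_{\rm L}(\KK(t)^n)\times\check{\cal L}_{\rm R}(\KK(t)^n)$, and since $(L,M)$ is feasible, $(L,M)$ is optimal to MVMP iff it minimizes $F$. By Lemma~\ref{lem:opt'} this holds iff $F(L,M)\le F(L',M')$ for all $(L',M')$ in $[(L,M),(L,M)^+]=[(L,M),(tL,t^{-1}M)]$. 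Lemma~\ref{lem:[L,tL]} identifies this interval with ${\cal S}(tL/L)\times\check{\cal S}(M/t^{-1}M)$ via $(L',M')=(L\circ X,M\bullet Y)\leftrightarrow(X,Y)$, where $(L,M)$ is the corner $(X,Y)=(\{0\},M/t^{-1}M)$. Using $\deg L'+\deg M'=\deg L+\deg M+\dim X+\dim Y-n$ (Proposition~\ref{prop:augment_MVMP}(1)) and the fact that $\deg A_i(L',M')\le 1$ (Lemma~\ref{lem:[L,tL]}(4)), each penalty term is $0$ or $\infty$, and Proposition~\ref{prop:augment_MVMP}(2) says their sum is $0$ exactly when $(X,Y)$ is feasible to MVSP$^{L,M}$. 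Hence $F(L',M')=-\deg L-\deg M+n-(\dim X+\dim Y)$ on feasible $(X,Y)$ and $\infty$ otherwise, so $F(L,M)\le F(L',M')$ on the whole interval iff $\dim X+\dim Y\le n$ for every feasible $(X,Y)$, i.e.\ iff the optimum of MVSP$^{L,M}$ is at most $n$; it is always at least $n$ via $(\{0\},M/t^{-1}M)$, so ``$\le n$'' and ``$=n$'' coincide.

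For $(2)\Leftrightarrow(3)$, MVSP$^{L,M}$ with $L=\langle P\rangle_{\rm L}$, $M=\langle Q\rangle_{\rm R}$ is the problem MVSP$^{P,Q}$ written just above the statement, which is MVSP in the form~(\ref{eqn:MVSP}) for the $n\times n$ linear matrix $(PAQ)^0=\sum_i(PA_iQ)^0x_i$; by Theorem~\ref{thm:FortinReutenauer} its optimum equals $2n-\ncrank(PAQ)^0$, which is $\le n$ iff $\ncrank(PAQ)^0\ge n$, i.e.\ (as the nc-rank of an $n\times n$ matrix is at most $n$) iff $(PAQ)^0$ is nonsingular over $\KK(\langle x\rangle)$. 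For $(3)\Leftrightarrow(4)$: since each $PA_iQ$ is proper over $\KK(t)$ and $t$ commutes with the $x_i$, the matrix $PAQ=\sum_i(PA_iQ)x_i$ is proper over $\KK(\langle x\rangle)(t)$ with degree-zero part $(PAQ)^0=\sum_i(PA_iQ)^0x_i$, so by Lemma~\ref{lem:key}, $\deg\Det PAQ\le 0$ with equality iff $(PAQ)^0$ is nonsingular over $\KK(\langle x\rangle)$; and since $\deg\Det P=\deg\det P$ and $\deg\Det Q=\deg\det Q$ are finite, Lemma~\ref{lem:DegAB} gives $\deg\Det PAQ=\deg\det P+\deg\Det A+\deg\det Q$, whence $\deg\Det PAQ=0$ iff $\deg\Det A=-\deg\det P-\deg\det Q$.

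The main obstacle I anticipate is the $(1)\Leftrightarrow(2)$ step --- not a hard computation but the careful bookkeeping it requires: staying consistent about the opposite lattice $\check{\cal L}_{\rm R}(\KK(t)^n)$ so that the upper half of the relevant interval is $[t^{-1}M,M]$ (in the original order) and $(L,M)$ lies at the corner $(\{0\},M/t^{-1}M)$; converting, via Proposition~\ref{prop:augment_MVMP}, the local optimality of $F$ into the cardinality bound for MVSP$^{L,M}$; and noting that this corner is always feasible, which is what makes ``at most $n$'' and ``equal to $n$'' the same condition. The remaining two links are direct applications of the quoted results.
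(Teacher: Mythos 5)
Your proof is correct, and the weak spots you flag in your own last paragraph (bookkeeping in the opposite lattice, feasibility of the corner $(\{0\},M/t^{-1}M)$) are handled correctly. However, the route is genuinely different from the paper's, and the difference is worth noting. The paper proves the four conditions by a single cycle $(1)\Rightarrow(2)\Rightarrow(3)\Rightarrow(4)\Rightarrow(1)$, and closes the loop with the implication $(4)\Rightarrow(1)$, which is just the elementary weak-duality bound of Lemma~\ref{lem:bound_degDet}; the L-convexity machinery (Lemma~\ref{lem:opt'}) is never invoked in the paper's proof of this proposition. Your argument instead proves each link as an honest equivalence, and the direction $(2)\Rightarrow(1)$ is carried by the L-convex local-to-global optimality criterion applied to the reformulation~(\ref{eqn:L-convex-min}), via the identification of the interval $[(L,M),(tL,t^{-1}M)]$ with ${\cal S}(tL/L)\times\check{\cal S}(M/t^{-1}M)$ from Lemma~\ref{lem:[L,tL]} and the translation in Proposition~\ref{prop:augment_MVMP}. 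What each buys: the paper's cycle is leaner and depends on nothing beyond the ingredients already used for the individual implications; your version makes explicit that local optimality in the building forces global optimality, which is precisely the conceptual underpinning of the steepest descent algorithm developed afterwards, and it yields $(2)\Leftrightarrow(1)$ without detouring through $(3)$ and $(4)$. One small caveat you may want to state explicitly: invoking Lemma~\ref{lem:opt'} presupposes that the L-convex function has a minimizer; this is automatic once $(2)$ holds (boundedness then follows from $(2)\Rightarrow(3)\Rightarrow(4)$, or one can argue directly as in the proof of Lemma~\ref{lem:opt'} using any point of strictly smaller value), but it is worth a word so the chain $(2)\Rightarrow(1)$ does not look like it secretly relies on the later links.
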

\begin{proof}
	(4) $\Rightarrow$ (1) follows from Lemma~\ref{lem:bound_degDet}.
	(2) $\Rightarrow$ (3) follows from Theorem~\ref{thm:FortinReutenauer}.
	Indeed, the optimal value of MVSP$^{L,M}$ is equal to $2n - \ncrank (PAQ)^0$.
	If the value is at most $n$, then $\ncrank (PAQ)^0 \geq n$, 
	and hence $\ncrank (PAQ)^0 = n$, i.e., $(PAQ)^0$ is nonsingular on $\KK(\langle x \rangle)$. 
	(3) $\Rightarrow$ (4) follows from Lemma~\ref{lem:key}. 
	(1) $\Rightarrow$ (2) follows from Proposition~\ref{prop:augment_MVMP}.
\end{proof}


\begin{proof}[Proof of Theorem~\ref{thm:degDet_formula}]
	We may assume that MVMP is bounded.
	Take a feasible solution $(L,M) = (\langle P \rangle_{\rm L}, \langle Q \rangle_{\rm R})$.  If $(PAQ)^0$ is nonsingular, 
	then $\deg \Det A = - \deg L - \deg M$. 
	Otherwise, we obtain another feasible solution $(L',M')$ of MVMP 
	with  $\deg L' + \deg M' > \deg L + \deg M$. Let $(L,M) \leftarrow (L',M')$.
	Repeating this procedure finitely many times, 
	we obtain $\deg \Det A = - \deg L - \deg M$.
	
	Notice that the proof is also obtained directly
	from Proposition~\ref{prop:opt_MVMP}~(1)~$\Leftrightarrow$~(4).
\end{proof}

\subsection{Steepest descent algorithm}
The above proof of Theorem~\ref{thm:degDet_formula} is algorithmic, 
and naturally leads to the following algorithm, which can be viewed as the steepest descent algorithm for the L-convex function in (\ref{eqn:L-convex-min}). 
\begin{description}
	\item[Steepest Descent Algorithm for $\deg \Det$ (coordinate-free version)]
	\item[Input:] A linear matrix $A = A_0 + A_1 x_1 + \cdots + A_m x_m$ over $\KK(t)$.
	\item[Output:] The degree $\deg \Det A$ of the Dieudonn\'e determinant of $A$. 
	\item[Step 0:] Choose a feasible solution $(L,M)$ of MVMP.
	\item[Step 1:] Solve MVSP$^{L,M}$ to obtain an mv-subspace $(X,Y)$. 
	\item[Step 2:] If $\dim X + \dim Y \leq n$, then $(L,M)$ is optimal to MVMP 
	and output $- \deg L - \deg M$.
	\item[Step 3:] Let $(L,M) \leftarrow (L \circ X, M \bullet Y)$, and go to step 1. 	
\end{description}
In step 1, the algorithm chooses an mv-subspace $(X,Y)$, 
and therefore $(L \circ X, M \bullet Y)$ is actually a steepest direction at $(L,M)$.
The input is allowed to be a linear rational matrix $A$.
If $A$ is nonsingular, then the algorithm 
outputs the correct answer after finitely many iterations; the exact number of iterations will be given in Lemma~\ref{lem:iterations}.
In the case of singular $A$,   
we do not know when the algorithm should output $-\infty$. 

We next consider the case of a linear polynomial matrix, 
and prove Theorem~\ref{thm:degDet_algo}. 
We specialize the above algorithm with a matrix form.
\begin{description}
	\item[Steepest Descent Algorithm for $\deg \Det$ (matrix version)]
	\item[Input:] A linear matrix $A = A_0 + A_1 x_1 + \cdots + A_m x_m$ over $\KK[t]$, 
	where $\ell$ is the maximum degree of entries of $A$.
	\item[Output:] The degree $\deg \Det A$ of the Dieudonn\'e determinant of $A$. 
	\item[Step 0:]  Let $A_i \leftarrow A_i t^{-\ell}$ for $i=0,1,2,\ldots,m$, and 
	$D^* \leftarrow  n\ell$.	
	\item[Step 1:] Solve MVSP in the matrix form
\begin{eqnarray*}
	{\rm Max.} && r + s  \\
	{\rm s.t.} && \mbox{$S A^{0}_i T$ has a zero submatrix in first $r$ rows and first $s$ columns,}\\
	&& S, T \in \KK^{n \times n}: \mbox{nonsingular},
\end{eqnarray*}
	and obtain optimal matrices $S,T$.
	\item[Step 2:] If the optimal value $r+s$ is at most $n$, 
	then output $D^*$.
	\item[Step 3:] Let $A_i \leftarrow (t^{{\bf 1}_{\leq r}})S A_i T 
	(t^{- {\bf 1}_{> s}} )$ for $i=0,1,2,\ldots,m$, and $D^* \leftarrow D^* - (r+s - n)$.
	If $D^* < 0$, then output $- \infty$.
	Go to step 1 otherwise.
\end{description}
Here ${\bf 1}_{>s} := {\bf 1} - {\bf 1}_{\leq s}$.
Notice that the matrix version changes the input linear matrix $A$ in each iteration.
In step 0, we suppose feasible module 
$(L,M) = (\langle P \rangle_{\rm L}, \langle Q \rangle_{\rm R})= (I, t^{-\ell} I)$　
with $\deg L + \deg M = - D^* = - n \ell$.
The update in step 3 can be understood as 
the movement from $(L,M) = (\langle P \rangle_{\rm L}, \langle Q \rangle_{\rm R})$ 
to a steepest direction $(\langle (t^{{\bf 1}_{\leq r}}) SP \rangle_{\rm L}, \langle QT(t^{- {\bf 1}_{> s}} ) \rangle_{\rm R})$ in $[(L,M), (tL,t^{-1}M)]$; see Lemma~\ref{lem:[L,tL]}~(4).
Since the input is a polynomial matrix, $\deg \Det A$ is guaranteed 
to be nonnegative if $A$ is nonsingular (Lemma~\ref{lem:bound_degDet}).
Also $D^*$ is always an upper bound of $\deg \Det A$.
Thus $D^* < 0$ in step 3 implies $\deg \Det A = - \infty$.
 
The following modification of step 3 is natural.
\begin{description}
	\item[Step 3$'$:] Choose the minimum integer $\kappa \geq 1$ 
	such that $((t^{\kappa{\bf 1}_{\leq r}})S A T 
	(t^{- \kappa {\bf 1}_{>s}}))^0$
	has a nonzero submatrix in first $r$ rows and $s$ columns.
	Let $A_i \leftarrow (t^{\kappa{\bf 1}_{\leq r}})S A_i T 
	(t^{- \kappa {\bf 1}_{>s}} )$ for $i=0,1,2,\ldots,m$, and let $D^* := D^* - \kappa(r+s - n)$.
	If $\kappa$ is unbounded or $D^* < 0$, then output $\deg \Det A = -\infty$. 
	Go to step 1 otherwise.
\end{description}
The coordinate-free formulation cannot incorporate
this modification, since it depends on basis matrices for the current 
$(L,M)$ and the mv-subspace in step~2. 
The modified SDA using step 3$'$ is considered in Section~\ref{subsec:classical}.

We next estimate the number of iterations 
by using L-convexity (Theorem~\ref{thm:bound_L}).  
For this purpose, we consider the master problem $\overline{\rm MVMP}$ of MVMP:
\begin{eqnarray*}
	\overline{\rm MVMP}: \quad {\rm Max.} && \deg L + \deg M \\
	{\rm s.t.} &&   \deg A(L,M) \leq 0, \\
	&& L \in {\cal L}_{\rm L}(\KK(\langle x \rangle)(t)^n),\ M \in {\cal L}_{\rm R}(\KK(\langle x \rangle)(t)^n),
\end{eqnarray*}
where the linear matrix $A$ is regarded as a bilinear form 
on $\KK(\langle x \rangle)(t)^n \times \KK(\langle x \rangle)(t)^n$.
Solving $\overline{\rm MVMP}$ is theoretically easy.
Choose biproper matrices $P,Q$ so that $PAQ$ is the Smith-McMillan form. 
Now $PAQ$ is the diagonal matrix $(t^{\alpha})$ for $\alpha \in \ZZ$.
Consider $L^* := \langle (t^{- \alpha^-})P \rangle_{\rm L}$ and 
$M^* := \langle Q(t^{- \alpha^+}) \rangle_{\rm R}$ for 
$\alpha^+ := \max ({\bf 0},  \alpha)$ and $\alpha^- := \min ( {\bf 0}, \alpha)$.
Then $(L^*,M^*)$ is feasible to $\overline{\rm MVMP}$.
Also $\deg L^* + \deg M^* = - \sum_{i=1}^n \alpha_i = - \deg \Det A$, and
hence $(L^*,M^*)$ is an optimal solution.

As for MVSP embedded to $\overline{\rm MVSP}$,
MVMP is embedded to $\overline{\rm MVMP}$ 
by the scalar extension $(L, M) \mapsto (\KK(\langle x \rangle)(t)^- \otimes L, M \otimes \KK(\langle x \rangle)(t)^-)$.
In particular MVMP
is an exact inner approximation of $\overline{\rm MVMP}$.
We further show that 
the steepest descent algorithm for MVMP 
is viewed as that for $\overline{\rm MVMP}$.
Let $(L,M) = (\langle P \rangle_{\rm L}, \langle Q \rangle_{\rm R})$ 
be a feasible solution of MVMP and of $\overline{\rm MVMP}$.
Consider MVSP$^{P,Q}$, and then $\overline{\rm MVSP}^{P,Q}$, 
which is given by
\begin{eqnarray*}
	\overline{\rm MVSP}^{P,Q}: \quad  {\rm Max.} && \dim X + \dim Y \\
	{\rm s.t.} && (PAQ)^0 (X,Y)  = \{0\}, \\
	&& X \in {\cal S}_{\rm L}(\KK(\langle x \rangle)^n),  
	Y \in {\cal S}_{\rm R}(\KK(\langle x \rangle)^n).
\end{eqnarray*}
By Lemma~\ref{lem:innerapprox}, 
any mv-subspace of MVSP$^{P,Q}$ is also an mv-subspace of 
$\overline{\rm MVSP}^{P,Q} = \overline{{\rm MVSP}^{P,Q}}$.
Thus we have:
\begin{Lem}
	A steepest direction for MVMP at $(L,M)$
	is also a steepest direction for $\overline{\mbox{MVMP}}$ at $(L,M)$.
\end{Lem}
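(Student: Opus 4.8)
The statement asserts that a steepest direction for MVMP at a feasible $(L,M)$ is also a steepest direction for $\overline{\mathrm{MVMP}}$ at $(L,M)$. The plan is to unwind what "steepest direction" means in each of the two problems via the local reduction already established, and then invoke Lemma~\ref{lem:innerapprox} verbatim.

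First I would recall that, by Proposition~\ref{prop:augment_MVMP}, the restriction of MVMP to the interval $[(L,M),(tL,t^{-1}M)]$ is exactly $\mathrm{MVSP}^{L,M}$ (equivalently $\mathrm{MVSP}^{P,Q}$ when $L=\langle P\rangle_{\mathrm L}$, $M=\langle Q\rangle_{\mathrm R}$), under the order-reversing-in-the-second-coordinate identification $(X,Y)\mapsto (L\circ X, M\bullet Y)$; and the objective changes only by the constant $\deg L+\deg M-n$ plus $\dim X+\dim Y$. Hence a steepest direction for MVMP at $(L,M)$ is, by definition, the image $(L\circ X, M\bullet Y)$ of an mv-subspace $(X,Y)$ of $\mathrm{MVSP}^{P,Q}$. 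Likewise, the same local-reduction argument applied to the master problems shows that the restriction of $\overline{\mathrm{MVMP}}$ to the same interval $[(L,M),(tL,t^{-1}M)]$ (now sitting inside ${\cal L}_{\mathrm L}(\KK(\langle x\rangle)(t)^n)\times\check{\cal L}_{\mathrm R}(\KK(\langle x\rangle)(t)^n)$ via scalar extension) is $\overline{\mathrm{MVSP}}^{P,Q}$ — one just repeats the bilinear computation in the proof of Proposition~\ref{prop:augment_MVMP}, noting that $(PAQ)^0$ over $\KK(\langle x\rangle)$ plays the role of the collection $(PA_iQ)^0$ over $\KK$, and that the interval structure $[(L,M),(tL,t^{-1}M)]$ is insensitive to the scalar extension since it only records subspaces of $tL/L$ and $M/t^{-1}M$, which are $\KK$-vector spaces in both cases (the master-problem interval is the scalar extension of the original one). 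So a steepest direction for $\overline{\mathrm{MVMP}}$ at $(L,M)$ is the image of an mv-subspace of $\overline{\mathrm{MVSP}}^{P,Q}$.

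Then the argument closes: the text has already observed that $\overline{\mathrm{MVSP}}^{P,Q}=\overline{\mathrm{MVSP}^{P,Q}}$, and Lemma~\ref{lem:innerapprox} (the Fortin–Reutenauer inner-approximation statement) says precisely that any mv-subspace of $\mathrm{MVSP}^{P,Q}$ is also an mv-subspace of $\overline{\mathrm{MVSP}^{P,Q}}$. Transporting this back through the identification $(X,Y)\mapsto (L\circ X, M\bullet Y)$ gives that the steepest direction $(L\circ X, M\bullet Y)$ chosen for MVMP is an optimal point of the restriction of $\overline{\mathrm{MVMP}}$ to $[(L,M),(tL,t^{-1}M)]$ of the same objective value, i.e. a steepest direction for $\overline{\mathrm{MVMP}}$ at $(L,M)$.

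The only genuinely non-routine point — and thus the step I would spell out most carefully — is the claim that the local interval for $\overline{\mathrm{MVMP}}$ coincides with (the scalar extension of) the local interval for MVMP, so that "steepest direction" in the master problem really does reduce to $\overline{\mathrm{MVSP}}^{P,Q}$ on the same lattice $[(L,M),(tL,t^{-1}M)]\simeq {\cal S}(tL/L)\times\check{\cal S}(M/t^{-1}M)$. This is where one must check that $tL/L$ and $M/t^{-1}M$ are unchanged under passing from $\KK$ to $\KK(\langle x\rangle)$ at the coefficient level — which holds because $\KK(\langle x\rangle)(t)^-\otimes L$ has $t$-adic quotient equal to $\KK(\langle x\rangle)\otimes(tL/L)$, yet the relevant optimization variables $X,Y$ are still just $\KK$-subspaces by Lemma~\ref{lem:innerapprox}. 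Everything else is bookkeeping with Proposition~\ref{prop:augment_MVMP} and the definitions.
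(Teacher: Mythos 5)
Your proposal is correct and follows exactly the paper's route: reduce both "steepest direction" notions to mv-subspaces of the link problems $\mathrm{MVSP}^{P,Q}$ and $\overline{\mathrm{MVSP}}^{P,Q}$ via Proposition~\ref{prop:augment_MVMP}, note the identity $\overline{\mathrm{MVSP}}^{P,Q}=\overline{\mathrm{MVSP}^{P,Q}}$, and invoke Lemma~\ref{lem:innerapprox}. You spell out the scalar-extension compatibility of the local intervals, which the paper leaves implicit, but this is elaboration rather than a different argument.
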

We next show the exact number of the iterations of SDA, where by the number of the iterations we mean the number of the updates of $(L,M)$ (or $A$).	
\begin{Lem}\label{lem:iterations}
	If $A$ is nonsingular, then the number of the iterations of 
	the steepest descent algorithm is 
	equal to $\alpha_1 - \alpha_n$, 
	where $\alpha_1$ and $\alpha_n$ are the maximum and 
	minimum degrees, respectively, of the Smith-McMillan form of $A$.
\end{Lem}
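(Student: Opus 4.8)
The plan is to read the iteration count off from the general bound of Theorem~\ref{thm:bound_L} and then evaluate that bound via the Smith--McMillan form of $A$. Write $\FF:=\KK(\langle x\rangle)$ and fix biproper matrices $P,Q$ over $\FF(t)$ with $PAQ=(t^{\alpha})$, $\alpha_1\ge\alpha_2\ge\cdots\ge\alpha_n$; recall $\ell=\alpha_1$. The algorithm starts (Step~0) from $(L_0,M_0):=(\langle I\rangle_{\rm L},\langle t^{-\ell}I\rangle_{\rm R})$ and produces a sequence of points of the uniform modular lattice ${\cal L}_{\rm L}(\FF(t)^n)\times\check{\cal L}_{\rm R}(\FF(t)^n)$; by the preceding lemma this sequence is a run of SDA$({\cal L})$ for the L-convex function of $\overline{\rm MVMP}$, since each steepest direction for MVMP is also one for $\overline{\rm MVMP}$. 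By Theorem~\ref{thm:bound_L} the $k$-th point is then a minimizer, where $k$ is the minimum $l_\infty$-distance from $(L_0,M_0)$ to a minimizer $\succeq(L_0,M_0)$; conversely, after $j$ steps the current point lies in $[(L_0,M_0),(t^jL_0,t^{-j}M_0)]$ (each step stays in an interval $[x,(x)^+]$), hence at distance at most $j$ from $(L_0,M_0)$, so the algorithm performs \emph{exactly} $k$ iterations. It remains to show $k=\alpha_1-\alpha_n$.

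For $k\le\alpha_1-\alpha_n$ I would work inside the apartment $\varSigma_{\rm L}(P)\times\check\varSigma_{\rm R}(Q)$, which by Lemma~\ref{lem:apartment} is isomorphic to $\ZZ^{2n}$ via $(z,w)\mapsto(\langle(t^z)P\rangle_{\rm L},\langle Q(t^{-w})\rangle_{\rm R})$. Using that $t$ is central and that $P,Q$ are biproper, one checks that $(L_0,M_0)$ has coordinates $({\bf 0},\alpha_1{\bf 1})$, and that, since $PAQ=(t^{\alpha})$ is diagonal, the constraint $\deg A(L,M)\le 0$ restricted to the apartment reads $z_i-w_i\le-\alpha_i$ for $i=1,\dots,n$. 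Thus the minimizers in the apartment are exactly the points with $z_i-w_i=-\alpha_i$, and the one with $w=\alpha_1{\bf 1}$ (so $M=M_0$ and $L=\langle(t^{\alpha_1{\bf 1}-\alpha})P\rangle_{\rm L}$) is a minimizer $\succeq(L_0,M_0)$ at distance $\|\alpha_1{\bf 1}-\alpha\|_\infty=\alpha_1-\alpha_n$.

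The harder direction is $k\ge\alpha_1-\alpha_n$, and this is where the bulk of the argument goes. Let $(L^*,M^*)$ be any minimizer with $(L^*,M^*)\succeq(L_0,M_0)$. Using the module/apartment structure of Lemmas~\ref{lem:[L,tL]} and~\ref{lem:skeleton} one may write $L^*=\langle(t^c)G\rangle_{\rm L}$ with $G$ biproper and $c\ge{\bf 0}$ the elementary-divisor exponents of the inclusion $L_0\subseteq L^*$, so that $d_\infty(L_0,L^*)=\max_i c_i$; similarly $M^*=\langle(t^{-\alpha_1{\bf 1}})H(t^{-c'})\rangle_{\rm R}$ with $H$ biproper and $c'\ge{\bf 0}$. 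From $\deg L^*+\deg M^*=-\deg\Det A=-\sum_i\alpha_i$ one gets $\sum_i c_i-\sum_j c'_j=n\alpha_1-\sum_i\alpha_i$, and, cancelling the central scalar $t^{-\alpha_1}$ and the biproper factors $G,H$, the feasibility constraint $\deg A(L^*,M^*)\le 0$ becomes $\deg(GAH)_{ij}\le c'_j-c_i+\alpha_1$ for all $i,j$. Now $GAH$ has the same Smith--McMillan exponents as $A$, so by Proposition~\ref{prop:SmithMcMillan} the maximum degree of an $(n-1)$-minor of $GAH$ equals $\delta_{n-1}(A)=\sum_i\alpha_i-\alpha_n$. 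On the other hand, bounding the degree of any minor of $GAH$ by the maximum weight of a partial permutation of the entry-degrees (the fact that $\deg\Det$ of a square matrix does not exceed the largest weight of a permutation of its entry-degrees, which follows by induction from the $2\times2$ formula of Example~\ref{ex:2x2-deg}), and plugging in $\deg(GAH)_{ij}\le c'_j-c_i+\alpha_1$, gives
\[
\sum_i\alpha_i-\alpha_n\ \le\ \Big(\sum_j c'_j-\min_j c'_j\Big)-\Big(\sum_i c_i-\max_i c_i\Big)+(n-1)\alpha_1 .
\]
Substituting $\sum_j c'_j-\sum_i c_i=\sum_i\alpha_i-n\alpha_1$ and simplifying yields $\max_i c_i-\min_j c'_j\ge\alpha_1-\alpha_n$, so $d_\infty(L_0,L^*)=\max_i c_i\ge\alpha_1-\alpha_n$ (as $c'\ge{\bf 0}$), hence $d_\infty((L_0,M_0),(L^*,M^*))\ge\alpha_1-\alpha_n$. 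Together with the previous paragraph this gives $k=\alpha_1-\alpha_n$.

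I expect the lower bound to be the main obstacle: arguing only through the valuation $\deg$ (equivalently, only through $\sum_i c_i$ and $\sum_j c'_j$) cannot control $\max_i c_i$, so the properness constraint $\deg A(L^*,M^*)\le 0$ must be used essentially, and it enters precisely at the level of $(n-1)$-minors through the invariant $\delta_{n-1}$. Two auxiliary facts should be isolated and proved (both routine): that multiplication by biproper matrices leaves all the $\delta_k$ unchanged, and that $\deg\Det$ of a square matrix over $\FF(t)$ is at most the maximum weight of a permutation of its entry-degrees (and hence the analogous statement for minors).
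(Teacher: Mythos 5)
Your argument is correct, and the two halves have quite different relationships to the paper. The upper bound is essentially the paper's: exhibit a minimizer of $\overline{\rm MVMP}$ in the apartment $\varSigma_{\rm L}(P)\times\check\varSigma_{\rm R}(Q)$ determined by the biproper factors of the Smith--McMillan form and read off the $l_\infty$-distance (the paper normalizes $A\leftarrow At^{-\alpha_1}$ first so that $\alpha_1=0$, which is cosmetic). For the lower bound you take a genuinely different route. The paper argues \emph{dynamically}, tracking the invariant $\alpha_n$ of the transformed matrix along the trajectory: under one update $A\leftarrow(t^{\mathbf{1}_{\le r}})SAT(t^{-\mathbf{1}_{>s}})$ it shows $\delta_n$ increases by exactly $r+s-n$ while $\delta_{n-1}$ increases by $r+s-n$, $r+s-n-1$, or $r+s-n+1$, so $\alpha_n=\delta_n-\delta_{n-1}$ rises by at most one; since termination is exactly $\alpha_n=0$, at least $\alpha_1-\alpha_n$ iterations are needed. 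You instead argue \emph{statically}, directly lower-bounding the distance from the initial point to \emph{every} minimizer $\succeq(L_0,M_0)$: you parametrize such a minimizer by elementary-divisor exponents $c,c'\ge\mathbf 0$ relative to biproper matrices, combine the valuation identity $\sum c_i-\sum c'_j=n\alpha_1-\sum\alpha_i$ with the entry-degree constraints from properness and the biproper-invariance of $\delta_{n-1}$, and close with a K\"onig--Egerv\'ary bound on $\deg\Det$ of minors by the maximum matching weight of entry-degrees; this forces $\max_i c_i\ge\alpha_1-\alpha_n$, hence $k\ge\alpha_1-\alpha_n$ in Theorem~\ref{thm:bound_L}. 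Both arguments hinge on $\delta_{n-1}$, but yours gives a trajectory-independent statement about the geometry of the minimizer set (showing the distance bound in Theorem~\ref{thm:bound_L} is tight without inspecting the iterates), at the cost of a slightly longer computation; the paper's is shorter once one accepts the cofactor-style accounting for how $\delta_{n-1}$ changes. The two auxiliary facts you flag are indeed routine: biproper-invariance of $\delta_k$ is part of the uniqueness statement in Proposition~\ref{prop:SmithMcMillan}, and the matching bound on $\deg\Det$ follows by choosing $p,q\in\ZZ^n$ with $p_i+q_j\ge\deg a_{ij}$ summing to the optimal matching weight and applying Lemma~\ref{lem:key} to the proper matrix $(t^{-p})A(t^{-q})$.
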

\begin{proof}
	Notice that $\ell = \alpha_1$.
	By the initial update $A \leftarrow At^{-\alpha_1}$, 
	we can assume that $\alpha_1 = 0 \geq \alpha_n$ and 
	the initial point $(L,M)$ is 
	$(\langle I \rangle_{\rm L}, \langle I \rangle_{\rm R})$.
	An optimal solution $(L^*,M^*)$ of $\overline{\rm MVMP}$ 
	with $(L^*,M^*) \succeq (L,M)$ is given 
	by $(\langle (t^{-\alpha}) P \rangle_{\rm L}, \langle Q \rangle_{\rm R})$ 
	for $\alpha = (\alpha_1,\alpha_2,\ldots,\alpha_n)$ and biproper $P,Q$.
	By $(\langle I \rangle_{\rm L}, \langle I \rangle_{\rm R}) =  (\langle P \rangle_{\rm L}, \langle Q \rangle_{\rm R})$,  
	both the initial point and the optimum belong to 
	the apartment $\varSigma_{\rm L}(P) \times \check \varSigma_{\rm R}(Q)$
	in ${\cal L}_{\rm L}(\KK(\langle x \rangle)(t)^n) \times  \check{\cal L}_{\rm R}(\KK(\langle x \rangle)(t)^n)$.
	Hence the $\ell_{\infty}$-distance from initial point to optimal solutions
	is at most $-\alpha_n$. 
	By Theorem~\ref{thm:bound_L}, the number of iteration is at most $\alpha_n$.
	The algorithm terminates when $\alpha_n=0$, i.e., $A$ becomes biproper (Lemma~\ref{lem:key}).	
    Thus it suffices to show that 
   $\alpha_n (< 0)$ increases by at most one 
   in the update $A \leftarrow (t^{{\bf 1}_{\leq r}})S A T 
   (t^{- {\bf 1}_{> s}})$. 
	Obviously
	$\delta_n = \deg \Det A$ increases by $r+s-n$.
	Also $\delta_{n-1}$ increases by $r+s - n$, $r+s -1 -n$, or $r+s + 1- n$; 
	then $\alpha_n = \delta_n - \delta_{n-1}$ increases by most one, as required. 
	
	The increase of $\delta_{n-1}$ can be seen as follows.
	Notice first that the update $A \leftarrow SAT$ does not change $\delta_{n-1}$.
    In the next update $A  \leftarrow (t^{{\bf 1}_{\leq r}}) A 
    (t^{- {\bf 1}_{> s}})$, the degree of an $(n-1) \times (n - 1)$ submatrix of $A$ 
    increases by $r+s +1 - n$ if the submatrix has all of the first $r$ rows and misses one of the last $n - s$ columns, by $r+s -1 -n$ if the submatrix misses one of the first $r$ rows and has all of the last $n - s$ columns, and by $r+s -n$ otherwise.
\end{proof}

\begin{proof}[Proof of Theorem~\ref{thm:degDet_algo}]
	We verify the time complexity of SDA (matrix form).
	After the initialization (step 0), each matrix $A_i$ is kept in the form
	\begin{equation}\label{eqn:expression}
		A_i^0 + A_i^{(1)}t^{-1} + \cdots + A_i^{(d)} t^{-d}, 
	\end{equation}
	where  $A_i^{(j)}$ is a matrix over $\KK$, and $d := \ell$.
	Step 1 can be done in $\gamma$ time.
	The update of expression (\ref{eqn:expression}) 
	in Step 3 can be done in $O(d mn^{\omega})$ time.
	The total number of iterations is $n\ell$ if $A$ is singular, and $\ell - \alpha_n$ if $A$ is nonsingular (Lemma~\ref{lem:iterations}).
	In each iteration, $d$ increases by one.
	Thus the total is $O(\ell n \gamma + \ell^2 mn^{\omega+2})$ 
	time if $A$ is singular, and is $O( (\ell - \alpha_n) \gamma +  (\ell - \alpha_n)^2 mn^{\omega})$ if $A$ is nonsingular. 
\end{proof}

\subsection{Combinatorial relaxation algorithm}
The steepest descent algorithm changes 
basis matrices $P,Q$ in each iteration.
It is a natural idea to optimize
on the apartment $\varSigma_{\rm L} (P) \times \varSigma_{\rm R}(Q)$ 
in each iteration.
This modification can be expected to reduce matrix operations, and
leads to the following algorithm, 
which is viewed as a generalization of the {\em combinatorial relaxation algorithm} previously developed for $\deg \det$~\cite{IwataOkiTakamatsu17,IwataTakamatsu13,Murota90_SICOMP,Murota95_SICOMP}. 
\begin{description}
	\item[Combinatorial Relaxation Algorithm for $\deg \Det$]
	\item[Input:] A linear matrix $A = A_0 + A_1 x_1 + \cdots + A_m x_m$ over $\KK[t]$, where $\ell$ is the maximum degree of entries of $A$.
	\item[Output:] The degree $\deg \Det A$ of the Dieudonn\'e determinant of $A$ 
	\item[Step 0:] Let $A_i \leftarrow A_i t^{-\ell}$ for $i=0,1,2,\ldots,m$, and 
	$D^* \leftarrow  n\ell$.
	\item[Step 1:] If $A^0$ is nonsingular, then output $D^* = \deg \Det A$. 
	
	\item[Step 2:] Find nonsingular matrices $S,T \in \KK^{n \times n}$ such that
	each $S A^{0}_i T$ $(i=0,1,2,\ldots,m)$ has a zero submatrix in first $r$ rows 
	and first $s$ columns with $r+s > n$. 
	\item[Step 3:] Solve the following problem:
	\begin{eqnarray*}
	{\rm MVMP}(\varSigma): \quad {\rm Max.} &&  \sum_{i} p_i - \sum_{i} q_i  \\
	{\rm s.t.} && \mbox{$(t^{p}) S A T (t^{-q})$ is proper},\\
	&& p,q \in \ZZ^n_+
	\end{eqnarray*}
	to obtain optimal vectors $p,q \in \ZZ^n_+$. 
	Let $A_i \leftarrow (t^{p})S A_i T (t^{- q})$ for $i=0,1,2,\ldots,m$, 
	and let $D^* \leftarrow D^* - \sum_{i} p_i + \sum_{i} q_i$.
	If $D^* < 0$ or {\rm MVMP}$(\varSigma)$ is unbounded, 
	then output $\deg \Det A := -\infty$.
	Otherwise, go to step 1.
\end{description}
The condition $r+s > n$ in step 1 guarantees that $D^*$ strictly decreases. 
Hence the algorithm terminates after $\ell n$ steps.
If the current solution $(L,M)$ is given by $(\langle P \rangle_{\rm L}, \langle Q \rangle_{\rm R})$, then 
MVMP$(\varSigma)$ is viewed as the restriction of MVMP 
to the apartment $\varSigma_{\rm L}(SP) \times \varSigma_{\rm R}(QT)$. 
Moreover, MVMP$(\varSigma)$ is 
the dual of the weighted matching problem in a bipartite graph. 
Indeed, the condition that $(t^{p})S A T(t^{-q})$ is proper
is written as 
\begin{equation}
p_i - q_j + d_{ij} \leq 0 \quad (1 \leq i,j \leq n), 
\end{equation}
where $d_{ij}(\leq 0)$ is the maximum degree of the $(i,j)$-entry of $SAT$.
Thus MVMP$(\varSigma)$ is the dual of the following weighted perfect matching problem:
\begin{eqnarray*}
	{\rm Max.} && \sum_{i=1}^n  d_{i \sigma(i)} \\
	{\rm s.t.} && \sigma:\mbox{permutation on $\{1,2,\ldots,n\}$}, 
\end{eqnarray*}
which can be efficiently solved by the Hungarian method 
to obtain optimal solution $p,q$ of the dual.

The combinatorial relaxation algorithm is seemingly more efficient
than the steepest descent algorithm, although 
we do not know any nontrivial iteration bound.  
The meaning of ``relaxation" is explained as follows.
Step 3 can be viewed as a relaxation process 
that the linear matrix $A$ is ``relaxed"  
into another linear matrix $\tilde{A}$ by replacing
each leading term $a_{ij} t^{d_{ij}}$ $(a_{ij} \in \KK)$ of $A$ with  
$x_{ij} t^{d_{ij}}$ for a new variable $x_{ij}$.
The optimal value of MVMP$(\varSigma)$ 
is the negative of $\deg \Det \tilde A$, and $\deg \Det A \leq \deg \Det \tilde A$.
Step 1 tests whether the relaxation is tight or not.  

\section{Linear symbolic matrix with rank-$1$ summands}\label{sec:rank-1}
In this section, we study a class of linear matrices 
$A = A_0 + A_1 x_1 + \cdots + A_m x_m$ 
for which $\deg \det A = \deg \Det A$ holds.
In the case of (nc-)rank,
Lov\'asz~\cite{Lovasz89} showed 
that if each summand $A_i$ is a rank-1 matrix, 
then the rank of $A$ is given by MVSP, i.e., $\rank A = \ncrank A$.
Ivanyos, Karpinski, and Saxena~\cite{IKS10} extended 
this result to the case where each $A_i$ other than $A_0$
has rank one.  
\begin{Thm}[\cite{IKS10}]\label{thm:rank-1}
	Let $A = A_0 + A_1 x_1 + \cdots + A_m x_m$ be a linear matrix over field $\KK$. 
	If $A_1,A_2,\ldots,A_m$ are rank-$1$ matrices, then
	$\rank A = \ncrank A$.
\end{Thm}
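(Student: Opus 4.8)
The plan is to establish the nontrivial inequality $\ncrank A \le \rank A$, the reverse holding always. First I would reduce to the square nonsingular case: if $\ncrank A = \rho$, choose $\rho$ rows and $\rho$ columns whose submatrix $A'$ has nc-rank $\rho$. Restricting to them only deletes coordinates of the vectors $b_i, c_i$ in the factorizations $A_i = b_i c_i^{\mathsf T}$ $(i = 1, \ldots, m)$, so $A'$ is again a constant matrix plus rank-one summands; it therefore suffices to show that any square constant-plus-rank-one linear matrix that is nc-nonsingular is nonsingular over $\KK(x_1, \ldots, x_m)$.

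For this I would use the shrunk-subspace form of Theorem~\ref{thm:FortinReutenauer}: an $n \times n$ linear matrix is nc-nonsingular iff $\dim \sum_{i=0}^m A_i Y \ge \dim Y$ for every subspace $Y \subseteq \KK^n$, where $A_i Y$ denotes the column span $\{A_i y : y \in Y\}$. Under the rank-one hypothesis this is the combinatorial condition
\[
\dim\Big(A_0 Y + \operatorname{span}\{\, b_i : c_i^{\mathsf T} y \neq 0 \text{ for some } y \in Y \,\}\Big) \ \ge\ \dim Y \qquad (Y \subseteq \KK^n).
\]
On the other hand, expanding $\det A = \det\big(A_0 + \sum_i x_i b_i c_i^{\mathsf T}\big)$ by multilinearity in the columns, and noting that only injective assignments of rank-one summands to columns contribute (two columns receiving parallel $b_i$'s give a vanishing determinant), one gets
\[
\det A \ = \sum_{L \subseteq \{1,\ldots,m\}} \Big(\prod_{i \in L} x_i\Big) c_L, \qquad c_L \ = \sum_{\substack{S, T \subseteq \{1,\ldots,n\} \\ |S| = |T| = |L|}} \pm\, \det A_0[\overline S, \overline T]\; \det B[S, L]\; \det C[T, L],
\]
with $B = (b_1\ \cdots\ b_m)$ and $C = (c_1\ \cdots\ c_m)$. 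Since distinct monomials $\prod_{i \in L} x_i$ are linearly independent over $\KK$, it suffices to show that the combinatorial condition above forces some $c_L$ to be nonzero.

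The main obstacle is cancellation inside the sum defining $c_L$. When $A_0 = 0$ it cannot occur: only $|L| = n$ contributes, with the single term $\pm \det B[S,L]\det C[T,L]$ for $S = T = \{1,\ldots,n\}$, and the statement reduces to Lov\'asz's identity that $\rank\big(\sum_i x_i b_i c_i^{\mathsf T}\big)$ is the matroid-intersection value of the column matroids of $B$ and $C$, proved via the Rado/matroid-union theorem. For general $A_0$ one must select $L$, together with bases witnessing the relevant minors of $A_0$, $B$, $C$, \emph{compatibly}, so that a single nonzero term of $c_L$ is not annihilated by the others. Concretely, I would take $Y$ attaining equality in the combinatorial condition (a tight set), use $Y$ to produce a triple $(L, S, T)$ with $\det A_0[\overline S, \overline T]\det B[S,L]\det C[T,L] \neq 0$ and $|L|$ maximal, and then show, by a support-and-rank comparison forced by the tightness of $Y$, that every other triple feeding the \emph{same} $c_L$ vanishes; this last verification --- essentially a deficiency-type Rado argument on the matroid obtained by gluing the column matroids of $B$ and $C$ to the linear structure of $A_0$ --- is the technical core. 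An alternative more in keeping with the algorithmic viewpoint of this paper is induction on $m$: specialize $x_m = 0$, and if the nc-rank drops, use $(b_m, c_m)$ to perform one augmenting step of Ivanyos--Qiao--Subrahmanyam type on a tight deficiency witness, the rank-one hypothesis being exactly what makes such a step always succeed.
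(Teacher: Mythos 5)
The paper does not prove this theorem: it cites it from Ivanyos--Karpinski--Saxena, so there is no in-paper argument to compare against. Judging your proposal on its own merits, the setup is sound but the argument is not finished.

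What you have correctly in place: the direction $\ncrank A \le \rank A$ is the nontrivial one; the reduction to a square nc-nonsingular $\rho\times\rho$ submatrix preserves the ``constant plus rank-one summands'' structure (restricting to a row/column set just restricts $b_i$ and $c_i$); $\det A$ is multilinear in $x_1,\dots,x_m$ because two columns drawn from the same summand $x_i b_i c_i^{\mathsf T}$ are parallel, and the Cauchy--Binet/Laplace expansion for the coefficients $c_L$ is correct up to sign bookkeeping; the specialization of the Fortin--Reutenauer condition to $\dim\bigl(A_0 Y + \operatorname{span}\{b_i : c_i \notin Y^\perp\}\bigr) \ge \dim Y$ is also right; and the $A_0 = 0$ case really is Lov\'asz's matroid-intersection identity.

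The gap is the last paragraph, which is the entire content of the theorem once $A_0 \ne 0$. Neither route you sketch is carried through. The first route --- pick a tight $Y$, extract a triple $(L,S,T)$ with nonzero product, and show every other triple contributing to the \emph{same} $c_L$ vanishes --- is not just unfinished but, as stated, not a viable target: for a generic nonsingular constant-plus-rank-one $A$, many triples $(S,T)$ contribute nonzero terms to a given $c_L$, and tightness of some $Y$ does not annihilate them. What one actually needs is an argument that the \emph{sum} is nonzero (e.g.\ by a degree/weight argument, or by exhibiting a single substitution of scalars for the $x_i$ that makes $A$ nonsingular), and ``a deficiency-type Rado argument on the matroid obtained by gluing the column matroids of $B$ and $C$ to the linear structure of $A_0$'' is a label, not a proof; $A_0$ does not define a matroid on $\{1,\dots,m\}$ in any standard way, so it is unclear what is being glued. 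The second route --- induction on $m$ via an IQS-type augmenting step, asserting that ``the rank-one hypothesis is exactly what makes such a step always succeed'' --- is the assertion that needs proving, and you do not prove it. To close the gap you would need either to exhibit, for some $L$ with $|L|$ chosen extremally relative to the tight $Y$, a concrete reason the alternating sum $c_L$ cannot vanish, or to formulate and prove the augmenting-step lemma precisely (which is essentially the content of the IKS result itself).
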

We remark that the rank computation of such a matrix reduces to linear matroid intersection~\cite{Lovasz89,Soma14}.

We show that Theorem~\ref{thm:rank-1} is naturally extended 
to the degree of the determinant
of linear matrix $A = A_0 + A_1 x_1 + \cdots + A_m x_m$ over $\KK(t)$.
\begin{Thm}\label{thm:rank-1_degdet}
	Let $A = A_0 + A_1 x_1 + \cdots + A_m x_m$ be a linear matrix over $\KK(t)$.
	If $A_1,A_2,\ldots,A_m$ are rank-$1$ matrices, 
	then $\deg \det A = \deg \Det A$.
\end{Thm}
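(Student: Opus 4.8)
The reverse inequality $\deg\det A \le \deg\Det A$ is Corollary~\ref{cor:deg_leq_Deg}, so the plan is to establish $\deg\det A \ge \deg\Det A$. We may assume $A$ is nonsingular over $\KK(\langle x\rangle)(t)$, since otherwise $\deg\Det A = -\infty$ and then $\deg\det A = -\infty$ as well by Corollary~\ref{cor:deg_leq_Deg}. Under this assumption MVMP is bounded, and it has an optimal solution $(L,M) = (\langle P \rangle_{\rm L}, \langle Q \rangle_{\rm R})$ with $P,Q \in \KK(t)^{n\times n}$ nonsingular: such an optimum is attained because the coordinate-free steepest descent algorithm for $\deg\Det$, started from the feasible point $(\langle I\rangle_{\rm L}, \langle t^{-\ell}I\rangle_{\rm R})$ with $\ell$ the maximum degree of the entries of the $A_i$, terminates in finitely many iterations when $A$ is nonsingular (cf.\ Lemma~\ref{lem:iterations}). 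By Proposition~\ref{prop:opt_MVMP}, the linear matrix over $\KK$
\[
(PAQ)^0 = (PA_0Q)^0 + (PA_1Q)^0 x_1 + \cdots + (PA_mQ)^0 x_m
\]
then satisfies $\ncrank (PAQ)^0 = n$, and moreover $\deg\Det A = -\deg\det P - \deg\det Q$.

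The \emph{crucial point} is that the rank-$1$ hypothesis propagates to the coefficient matrices $(PA_iQ)^0$ for $i \ge 1$. Since $\KK(t)$ is a commutative field and $A_i$ has rank at most $1$ over it, all $2\times 2$ minors of $A_i$ vanish; as $P,Q$ are nonsingular, the same holds for the proper matrix $C := PA_iQ$. Now the ``constant-term'' map $\KK(t)^- \to \KK$ sending a proper element $u + t^{-1}(\cdots)$, with $u\in\KK$, to $u$ is precisely the residue-field homomorphism of the valuation ring $\KK(t)^-$, and applied entrywise to $C$ it yields $C^0 = (PA_iQ)^0$. Being a ring homomorphism, it carries each $2\times 2$ minor of $C$ to the corresponding $2\times 2$ minor of $C^0$; hence all $2\times 2$ minors of $(PA_iQ)^0$ vanish, i.e.\ $(PA_iQ)^0$ has rank at most $1$ over $\KK$ for every $i \ge 1$.

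Therefore $(PAQ)^0$ is a linear matrix over $\KK$ whose variable summands all have rank at most $1$ (dropping any vanishing summands, which changes neither side), so Theorem~\ref{thm:rank-1} gives $\rank (PAQ)^0 = \ncrank (PAQ)^0 = n$; that is, $(PAQ)^0$ is nonsingular over the commutative field $\KK(x)$. Since $PAQ$ is proper and its constant part is nonsingular over $\KK(x)$, Lemma~\ref{lem:key}, applied over the commutative field $\KK(x)(t)$ where $\Det = \det$, shows that $PAQ$ is biproper and $\deg\det(PAQ) = 0$. Using multiplicativity of the determinant over $\KK(x)(t)$ we conclude
\[
\deg\det A = \deg\det(PAQ) - \deg\det P - \deg\det Q = -\deg\det P - \deg\det Q = \deg\Det A,
\]
which finishes the argument.

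Among these steps, the second paragraph---stability of the rank-$1$ property under passing to the $t^0$-part---is the only genuinely new ingredient; the rest is an assembly of Theorem~\ref{thm:rank-1}, the MVMP duality (Theorem~\ref{thm:degDet_formula} and Proposition~\ref{prop:opt_MVMP}), and Lemma~\ref{lem:key}. The one technical point I would take care with is the existence of an \emph{attained} optimum of MVMP, so that the matrix $(PAQ)^0$ is literally available; this is justified by the finite termination of the steepest descent algorithm in the nonsingular case.
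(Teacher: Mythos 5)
Your proof is correct and follows the paper's argument step for step: take an optimal MVMP solution $(\langle P\rangle_{\rm L},\langle Q\rangle_{\rm R})$, show that the rank-$1$ property of $A_i$ descends to $(PA_iQ)^0$, apply Theorem~\ref{thm:rank-1} to upgrade nc-rank nonsingularity to ordinary nonsingularity, and close with Lemma~\ref{lem:key}. The one place you diverge is the rank-$1$ propagation step: the paper writes $PA_iQ = \tilde u \tilde v^\top$, normalizes to $t^{c} u\, t^{d} v^\top$ with $u,v$ proper and $c+d\le 0$, and reads off $(PA_iQ)^0 = u^0(v^0)^\top$ or $0$ directly; you instead observe that the constant-term map $\KK(t)^-\to\KK$ is the residue-field ring homomorphism of the DVR, hence preserves the vanishing of all $2\times 2$ minors. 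Both arguments are sound; yours is a little more conceptual and would generalize verbatim to summands of rank $\le r$, while the paper's is more hands-on and also produces the rank-$1$ factorization of $(PA_iQ)^0$ explicitly. Your explicit disposal of the singular case via Corollary~\ref{cor:deg_leq_Deg} and your justification that MVMP attains its optimum (via finite termination of SDA) are small, correct refinements the paper leaves implicit.
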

\begin{proof}
	Consider an optimal module 
	$(\langle P \rangle_{\rm L}, \langle Q \rangle_{\rm R})$ for MVMP.
	By Proposition~\ref{prop:opt_MVMP}, 
	the linear matrix $(PAQ)^0$ is nonsingular as a matrix over $\KK(\langle x \rangle)$.
	Notice that each $(PA_iQ)^0$ for $i=1,2,\ldots,m$ has rank one.
	Indeed, $PA_iQ$ is a rank-1 matrix over $\KK(t)$, and hence is written as $\tilde u \tilde v^{\top}$ 
	for (nonzero) $\tilde u, \tilde v \in \KK(t)^n$.
	Consider the maximum degrees $c$ and $d$ of the components of $\tilde u$ and $\tilde v$, respectively. Necessarily $c+d \leq 0$ (since $PA_iQ$ is proper), and
	$PA_iQ$ is written as $t^{c}u t^{d}v^{\top}$ for $u,v \in (\KK(t)^-)^n$. 
    Then $(PA_iQ)^0 = u^0(v^0)^\top$ if $c+d = 0$ and zero if $c+d < 0$.
    
	By Theorem~\ref{thm:rank-1}, the linear matrix $(PAQ)^0$ is also nonsingular 
	as a matrix over $\KK(x)$.
	By Lemma~\ref{lem:key}, 
	we have $\deg \det A = - \deg \det P - \deg \det Q = \deg \Det A$.
\end{proof}

\begin{Rem}
	Observe from the Fortin-Rautenauer formula (Theorem~\ref{thm:FortinReutenauer}) that the nc-$\rank A$ 
	is a property of the matrix vector subspace 
	${\cal A} \subseteq \KK^{n \times n'}$ 
	spanned by $A_0, A_1,\ldots,A_m$ over $\KK$.
    Therefore,  $\rank A= \ncrank A$ still holds 
    if  the matrix subspace ${\cal A}' \subseteq \KK^{n \times n'}$
    spanned by  $A_1,\ldots,A_m$ admits a rank-1 basis $B_1, B_2,\ldots, B_{m'}$.
    Indeed, the constraint $A_i(X,Y) = \{0\}$ in MVSP can be replaced 
    by $B_i(X,Y) = \{0\}$. 
    See \cite{Gurvits04,IvanyosKarpinskiOiaoSantha15} for 
    the rank computation of such a linear matrix with a hidden rank-1 basis.
    
    In the case of $\deg \Det$, instead of the matrix vector space, 
    we may consider the matrix submodule ${\cal A} \subseteq \KK(t)^{n \times n}$ generated by $A_1,A_2,\ldots,A_m$ over $\KK(t)^-$.
    Since $\KK(t)^-$ is a PID and 
    ${\cal A}$ is a submodule of a free module 
    of matrices with bounded degree entries, 
    ${\cal A}$ is also free, and has a $\KK(t)^-$-basis.
   Analogously to the above, 
   $\deg \det A = \deg \Det A$ holds 
   if  the matrix module ${\cal A}'$
   generated by  $A_1,\ldots,A_m$ admits a rank-1 basis $B_1, B_2,\ldots, B_{m'}$; the constraint $\deg A_i(L,M) \leq 0$ can be replaced 
   by $\deg B_i(L,M) \leq 0$.
\end{Rem}

\begin{Rem}
	An important example of a linear matrix
	with possibly $\rank  < \ncrank$
	is a skew-symmetric linear matrix  $A = \sum_{i=1}^m A_i x_i$ 
	with rank-2 skew-symmetric summands~$A_i$; see the next example.
	The problem of 
	computing the (usual) rank of such a matrix is 
	a generalization of the nonbipartite matching problem, and is     
	equivalent to the {\em linear matroid parity problem}; see~\cite{Lovasz89}.
	Recently Iwata and Kobayashi~\cite{IwataKobayashi17} developed a polynomial time algorithm
	for the {\em weighted} linear matroid parity problem 
	by considering $\deg \det$ and using the idea of the combinatorial relaxation method.
	It is an interesting future direction to refine our non-commutative framework for   
	skew-symmetric linear matrices to capture
    nonbipartite matching and its generalizations.
\end{Rem}

\begin{Ex}
		Consider the following $3$ by $3$ linear skew-symmetric matrix $A = A_1x_1 + A_2 x_2 + A_3x_3$ (with rank-2 summands):  
		\[
		A =  \left(  
		\begin{array}{ccc}
		0 & x_1 & x_2 \\
		-x_1 & 0 & x_3 \\
		-x_2 & -x_3 & 0
		\end{array} \right).
		\]
		Then it is obvious that $\rank A = 2$. 
		However $\ncrank A = 3$. Indeed, it holds that $(1\ u \ v)A_i(1\ u'\ v')^{\top} = 0$ $(i=1,2,3)$ implies 
		$u= u'$ and $v = v'$ and $(1\ u \ v)A_i(0\ u'\ v')^{\top} = 0$ $(i=1,2,3)$
		implies $u' = v' = 0$. From this, we see that 
		there is no vanishing subspace $(X,Y)$ with 
		$(\dim X, \dim Y) = (2,1)$ or $(1,2)$. Therefore  
		mv-subspaces are 
		trivial ones $(K(\langle x \rangle)^3, 0)$ and $(0, K(\langle x \rangle)^3)$, 
		and $\ncrank A = 3$. 
		
		Next consider a weighted version
			\[
			A =  \left(  
			\begin{array}{ccc}
			0 &  t^{c_1}x_1 & t^{c_2} x_2 \\
			- t^{c_1} x_1& 0 & t^{c_3} x_3 \\
			- t^{c_2} x_2 & - t^{c_3}x_3 & 0
			\end{array} \right).
			\]
			for weights $c_1,c_2, c_3 \in \ZZ$.
	    Then, for $\alpha := (c_1+ c_2 -c_3,  c_1- c_2 + c_3, - c_1+ c_2 + c_3)$, it holds
	    \[
	    ( t^{- \alpha/2}) A   ( t^{- \alpha/2})   
		= \left(  
		\begin{array}{ccc}
			0 & x_1 & x_2 \\
			-x_1 & 0 & x_3 \\
			-x_2 & -x_3 & 0
		\end{array} \right).
		\]
		By Proposition~\ref{prop:opt_MVMP}, 
		we have $\deg \Det A = c_1 + c_2 + c_3$. On the other hand, 
		it obviously holds $\deg \det A = - \infty$. 
\end{Ex}

\subsection{Some classical examples in combinatorial optimization}\label{subsec:classical} 
As mentioned in the introduction, 
some of classical combinatorial optimization problems
are formulated as the computation of the degree of the determinant 
of a linear matrix with the rank-1 property.
Here we consider representative three examples (bipartite matching, 
linear matroid greedy algorithm, linear matroid intersection), and
explain how the steepest descent algorithm works on these problems.
This gives some new insights on classical algorithms 
in combinatorial optimization. 

For a subset $J \subseteq \{1,2,\ldots,n\}$, 
let $\QQ^J \subseteq \QQ^n$ denote the coordinate subspace spanned 
by unit vectors $e_i$ for $i \in J$, and let ${\bf 1}_{J} := \sum_{i \in J} e_i$.

\subsubsection{Bipartite matching}\label{subsub:bipartite}
Let $G = (U,V; E)$ be a bipartite graph with color classes $U,V$.
Vertices of $U$ (resp. $V$) are numbered as $1,2,\ldots,n$ (resp. $1,2,\ldots,m$).
As mentioned in the introduction, 
the maximum size $\nu(G)$ of a matching of $G$ is written as 
the rank of an $n \times m$ 
linear matrix $A = \sum_{e = ij \in E} x_e E_{ij}$, 
where $E_{ij}$ is the matrix having $1$ at $(i,j)$-entry and zero at others, 
and $x_e$ $(e \in E)$ are variables. 
Each $E_{ij}$ of $A$ has rank $1$. It holds that $\rank A =$ nc-$\rank A$. 
By Theorem~\ref{thm:FortinReutenauer}, 
$\nu(G)$ is equal to $n+m$ minus the dimension of an mv-subspace $(X,Y)$.
Observe that any feasible subspace $(X,Y)$ of MVSP
is of the form of $(\QQ^J, \QQ^K)$ for 
$J \subseteq \{1,2,\ldots,n\}, K \subseteq \{1,2,\ldots,m\}$ 
such that there is no edge between $J$ and $K$, i.e., 
$J \cup K$ is a stable set of $G$, and is the complement of a vertex cover.
Thus Theorem~\ref{thm:FortinReutenauer} is nothing but 
K\"onig's formula for the maximum matching.

Next we consider the weighted situation.
Suppose $|U| = |V| = n$ for simplicity, and that
each edge $e \in E$ has weight $c_e \in \ZZ$.
Consider a linear matrix $A := \sum_{e = ij \in E} t^{c_{e}} x_e E_{ij}$
over $\QQ (t)$.
Then the maximum weight of a perfect matching of $G$ 
is equal to $\deg \det A$, and is equal to $\deg \Det A$ 
(by Theorem~\ref{thm:rank-1_degdet}).
We explain how the steepest descent algorithm works in this case.
We use the modified step 3$'$.
Suppose for explanation that $c_e \leq 0$ for each $e \in E$. 
Linear matrix $A^0$ corresponds to the subgraph $G^0$ consisting of edges with $c_e = 0$. 
A steepest direction is given by $(\QQ^J, \QQ^K)$ 
for a maximum stable set $J \cup K$ of $G^0$. 
In step 3$'$, $\kappa$ is chosen as
the maximum of $- c_e (> 0)$ for edges $e \in E$ belonging to $J \cup K$. 
Then $A$ is updated to $(t^{\kappa {\bf 1}_J}) A (t^{- \kappa ({\bf 1}- {\bf 1}_K)})$.
SDA repeats this process,  
which is viewed as a cut-canceling algorithm.
The resulting optimal solution is a form of 
$(\langle(t^{p}) \rangle_{\rm R}, \langle(t^q)\rangle_{\rm L})$ for $p,q \in \ZZ^n$. 
Here vectors $p,q$ are dual optimal solutions of 
the LP-formulation of the weighted matching problem.
If we always choose a maximum stable set $J \cup K$ 
with minimal $J$ (and maximal $K$) in each iteration, then SDA coincides with the Hungarian method.
Indeed, $J$ is the subset reachable from 
vertices in $U$ not covered  by a maximum matching 
in the corresponding residual graph of $G^0$.
\subsubsection{Maximum weight base in linear matroid}
Let $a_1,a_2,\ldots,a_m$ be $n$-dimensional vectors of $\QQ^n$.
Consider $m$ variables $x_1,x_2,\ldots,x_m$, 
and linear matrix $A = \sum_{i=1}^m x_i a_i a_i^{\top}$.
Then $\rank A = \ncrank A = \rank (a_1\ a_2\ \cdots a_m)$.  
Let $W \subseteq \QQ^n$ be the vector space 
spanned by $a_1,a_2,\ldots,a_m$.
Then $(W^{\bot},\RR^n)$ is an mv-subspace, 
where $W^{\bot}$ denotes the orthogonal subspace of $W$.

As in Section~\ref{subsub:bipartite}, consider the weighted situation.
Let $c_i \in \ZZ$ be the weight on $a_i$ for each $i$.
Consider linear matrix $A = \sum_{i=1}^m t^{c_i} x_i a_i a_i^{\top}$.
Then $\deg \det A = \deg \Det A$ is equal to 
the maximum of $\sum_{i \in B} c_i$ over all $B \subseteq \{1,2,\ldots,m\}$
such that $\{ a_i \mid i \in B\}$ forms a basis of $\QQ^n$.
Namely $\deg \det A$ is equal to the maximum weight of a base of the matroid 
represented by vectors $a_1,a_2,\ldots,a_m$.

In this case, 
the steepest descent algorithm
is viewed as the greedy algorithm.
Suppose that each $c_i$ is nonpositive. 
Then $A^0$ is the linear matrix 
$\sum_{i \in I_0}  x_i a_i a_i^{\top}$, where 
$I_0$ is the set of indices $i$ with $c_i = 0$.
Consider the subspace $W_1$ 
spanned by $a_i$ $(i \in I_0)$.
Then $(W_1^{\bot},\QQ^n)$ is an mv-subspace.
Consider a nonsingular matrix $Q \in \QQ^{n \times n}$
such that the first $k_1$ rows form a basis of $W_1^{\bot}$.
In step 3, $A$ is updated as $(t^{{\bf 1}_{\leq k_1}}) Q A$, 
or feasible module 
$(L,M)$ moves from $(\langle I \rangle_{\rm L}, \langle I \rangle_{\rm R})$ 
to $(\langle (t^{{\bf 1}_{\leq k_1}})Q \rangle_{\rm L}, \langle I \rangle_{\rm R})$.
The exponent of term $t^{c_i} x_i Q a_ia_i^{\top}$ increases 
if and only if $a_i$ does not belong to $W_1$. 
Thus, in step 3$'$, 
SDA can augment $A$ as $(t^{\alpha_1 {\bf 1}_{\leq k_1}}) Q A$ 
until $c_i + \alpha_1$  becomes zero for some $a_i \not \in W_1$.
Then $I_0$ increases, 
and the next subspace $W_2$ 
spanned by $a_i$ $(i \in I_0)$ increases.
Consequently $W_2^{\bot} \subset W_1^{\bot}$. 
We can modify $Q$ so that it also includes a basis of $W_2^{\bot}$.
SDA moves $(L,M)$ to $(\langle (t^{\alpha_1 {\bf 1}_{\leq k_1} + \alpha_2 {\bf 1}_{\leq k_2}}Q) \rangle_{\rm L}, \langle I \rangle_{\rm R})$, and obtain $W_3^{\bot} \subset W_2^{\bot}$ as above.
Repeat the same process. Eventually SDA reaches 
an optimal module 
$(\langle (t^{\sum_{j=1}^h \alpha_j {\bf 1}_{\leq k_j}})Q \rangle_{\rm L}, \langle I \rangle_{\rm R})$, 
where $Q$ consists of bases of vector spaces
$W_1^{\bot} \supset W_2^{\bot} \supset \cdots \supset W_h^{\bot}$.
This process simply chooses vectors $a_i$ from largest weights. 
It is nothing but the matroid greedy algorithm, 
where we need no explicit computation of $Q$. 
The obtained $\alpha_k$ can be interpreted as 
an optimal dual solution of 
the LP-formulation of the maximum weight base problem, 
where $\alpha_k$ is the dual variable corresponding to the flat $\{i \mid a_i \in W_k\}$.
\subsubsection{Linear matroid intersection}
In addition to $a_1,a_2,\ldots,a_m$ above, 
we are given vectors $b_1,b_2,\ldots,b_m \in \QQ^n$.
Consider a linear matrix $A = \sum_{i=1}^m x_i a_i b_i^{\top}$ 
with variables $x_1,x_2,\ldots,x_m$.
We have $\rank A = \ncrank A$, they are equal to  
the maximum cardinality of a subset $I \subseteq \{1,2,\ldots,m\}$
such that both $\{ a_i \mid i \in I\}$ and $\{b_i \mid i \in I\}$ are
independent. Namely, $\rank A$ is the maximum cardinality of 
a common independent set
of two matroids ${\bf M}_1$ and ${\bf M}_2$ represented by $a_1,a_2,\ldots,a_m$ and $b_1,b_2,\ldots,b_m$, respectively. 
For $I \subseteq \{1,2,\ldots,m\}$,
let $\rho(I)$ and $\rho'(I)$ denote 
the dimension of vector spaces spanned by $a_i$ $(i \in I)$ and by $b_i$ $(i \in I)$, respectively.
By the matroid intersection theorem, 
$\rank A$ is the minimum of $\rho(I) + \rho'(J)$ over 
all bi-partitions $I,J$ of $\{1,2,\ldots,m\}$.
Then an mv-subspace $(X,Y)$ is given by 
$X = \{a_i \mid i \in I\}^{\bot}$ and $Y = \{b_j \mid j \in J\}^{\bot}$
for bi-partition $I,J$ attaining the minimum.
This fact is noted in~\cite{Lovasz89}.

Suppose that we are further given 
weights $c_i \in \ZZ$ for each $i=1,2,\ldots,m$.
Consider a linear matrix $A = \sum_{i=1}^m t^{c_i} x_i a_i b_i^{\top}$ over $\QQ(t)$.
Then $\deg \det A = \deg \Det A$ is equal to 
the maximum weight $\sum_{i \in B} c_i$ 
of a common independent set $B \subseteq \{1,2,\ldots,m\}$ with $|B| = n$ 
of matroids ${\bf M}_1$ and ${\bf M}_2$. 
Namely, the problem of finding $\deg \det A$ 
is the weighted linear matroid intersection problem.
Let us explain the behavior of 
the steepest descent algorithm applied to this case.
Suppose that 
we are given a feasible module $(L,M)$ of 
form $L = \langle (t^{\alpha}) S \rangle_{\rm L}$ and 
$M = \langle T (t^\beta) \rangle_{\rm R}$
for nonsingular matrices $S,T \in \QQ^{n \times n}$ 
and integer vectors $\alpha, \beta \in \ZZ^n$.
It may appear that
a naive choice of a steepest direction $(X,Y)$ at $(L,M)$
would violate this form in the next step, 
but, in fact, such a situation can naturally be avoided.

Let $R_1,R_2,\ldots,R_\mu$ be the partition of $\{1,2,\ldots,n\}$
such that $i,j$ belong to the same part if and only if $\alpha_i = \alpha_j$.
Similarly, let $C_1,C_2,\ldots,C_\nu$ be the partition 
such that $i,j$ belong to the same part if and only if $\beta_i = \beta_j$.
Regard matrix $SAT$ as a block matrix, where  
columns and rows are partitioned by $R_1,R_2,\ldots,R_\mu$ and $C_1,C_2,\ldots,C_\nu$.
In $(t^\alpha) SAT (t^\beta)$, 
the $(k,\ell)$-th block is uniformly 
multiplied by $t^{\alpha_i + \beta_j}$ for $i \in R_k$, $j \in C_\ell$.
Consider the linear matrix $((t^\alpha) SAT (t^\beta))^0$ 
(to obtain a steepest direction).
Then each summand $((t^\alpha) S a_ib_i^{\top} T (t^\beta))^{0}x_i$
has at most one nonzero block, where the nonzero block (if it exists) has rank $1$. 
Now $((t^\alpha) SAT (t^\beta))^0$ is essentially 
in the situation of a {\em partitioned matrix with rank-1 blocks}~\cite{HH16DM}.
See also Section~\ref{subsec:HH} in Appendix.
By the partition structure, 
any mv-subspace $(X,Y)$ is of the form of 
$(X_1 \oplus X_2 \oplus \cdots \oplus X_{\mu}, Y_1 \oplus Y_2 \oplus \cdots \oplus Y_{\nu})$ 
for $X_k \subseteq \QQ^{R_k}$ and $Y_\ell \subseteq \QQ^{C_\ell}$ 
$(k=1,2,\ldots,\mu,\ell = 1,2,\ldots,\nu)$.
Then basis matrices $S',T'$ for $X,Y$ 
are taken as block diagonal form so that
$S'(t^{\alpha}) = (t^{\alpha})S'$ and $T'(t^{\beta}) = (t^{\beta})T'$.
In the next iteration, 
$(L,M)$ is $(\langle (t^{\alpha'}) S'S \rangle_{\rm L}, \langle TT'(t^{\beta'}) \rangle_{\rm R})$.
Consequently the obtained optimal solution 
is of the form of $(\langle (t^{\alpha}) S \rangle_{\rm L}, \langle T(t^{\beta}) \rangle_{\rm R})$.
In particular, exponent vectors $\alpha$ and $\beta$ can  
be dealt with as numerical vectors. 
%

We here note that this algorithm is viewed as 
a variant of the primal dual algorithm
for weighted matroid intersection problem by Lawler~\cite{Lawler75}. 
His algorithm keeps and updates chains of flats 
in ${\bf M}_1$ and ${\bf M}_2$ and their weights.
Observe that module $\langle T (t^{\beta})_{\rm L} \rangle$
can be identified with a chain $\emptyset \neq X_1 \subset X_2 \subset \cdots \subset X_n = \QQ^n$ of subspaces and coefficients 
$\lambda_i$ $(i=1,2,\ldots,n)$ such that $\lambda_i \geq 0$ for $i < n$.
Indeed, arrange $\beta$ as $\beta_1 \geq \beta_2 \geq \cdots \geq \beta_n$, 
and define $X_i$ as the subspace spanned the first $i$ rows
and $\lambda_i$ as $\beta_i - \beta_{i+1}$ (with $\beta_{n+1} = 0$).
This correspondence is unique if subspaces $X_i$ with $\lambda_i = 0$ are omitted.
In this way, module $(L,M)$ can be kept as a pair of weighted chains of subspaces.
If these subspaces are orthogonal complements of 
the subspaces spanned by some subsets of $a_1,a_2,\ldots,a_m$ and $b_1,b_2,\ldots,b_m$, 
then $(L,M)$ can further be kept by a pair
of weighted chains of flats of matroids ${\bf M}_1$ and ${\bf M}_2$, 
as in Lawler's algorithm.

Moreover, if we choose an mv-subspace $(X,Y)$ 
with minimal $X$ (and maximal $Y$) in each iteration 
and use the modified step 3$'$, 
then SDA coincides with 
the {\em weight splitting algorithm} by Frank~\cite{Frank81} 
(applied to linear matroids), 
where $(\alpha, \beta)$ corresponds to a weight splitting; 
see also \cite[Section 13.7]{KorteVygen} for the weight splitting algorithm.
The detail of this correspondence is given in \cite{FurueHirai19}.

\subsection{Mixed polynomial matrix}
A {\em mixed polynomial matrix}
is a polynomial matrix $A =　\sum_{k=0}^\ell (Q_k + T_k) t^k$ with indeterminate $t$
such that $Q_k$ is a matrix over $\QQ$,
each entry of $T_k$ is zero or one of variables $x_1,x_2,\ldots,x_m$, and
each variable $x_i$ appears as one entry of one of $T_1,T_2,\ldots,T_k$.
In the case of $\ell =0$, $A$ is called a {\em mixed matrix}. 
See~\cite{MurotaMatrix} for detail of mixed (polynomial) matrices.
A mixed polynomial matrix
is viewed as a linear matrix over $\QQ(t)$ with rank-$1$ summands, 
since the coefficient matrix of $x_k$
is written as $E_{ij}$.
Therefore it holds that $\deg \det A = \deg \Det A$.

It is shown in~\cite{IwataOkiTakamatsu17,IwataTakamatsu13} that
the combinatorial relaxation algorithm computes $\deg \det A$ 
in $O(\ell^2 n^{\omega+2})$ time. 
This estimate seems very rough, since 
it is based on a trivial bound $\ell n$ 
of the number of iterations.
In the case of the steepest descent algorithm, 
we obtain a sharper estimate. 
\begin{Thm}\label{thm:mixedpoly}
	Let $A$ be an $n \times n$ mixed polynomial matrix with maximum degree $\ell$.
	By the steepest descent algorithm, $\deg \det A$ can be computed in 
	$O(\ell^2 n^{\omega+2})$ time,  
	and in $O((\ell - \alpha_n) n^3 \log n+  (\ell - \alpha_n)^2 n^{\omega} )$ time if $A$ is nonsingular, 
	where $\alpha_n$ is the minimum degree of diagonals of the Smith-McMillan form of $A$.
\end{Thm}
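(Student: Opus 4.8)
The plan is to obtain Theorem~\ref{thm:mixedpoly} as a specialization of the general algorithmic result, Theorem~\ref{thm:degDet_algo}, in which the two quantities that govern its running time --- the cost $\gamma$ of one MVSP over $\KK=\QQ$ and the number $m$ of variables carried through the matrix bookkeeping --- are controlled by the combinatorial structure of a mixed polynomial matrix. First I would record that a mixed polynomial matrix $A=\sum_k(Q_k+T_k)t^k$ is a linear matrix over $\QQ(t)$ whose variable-coefficient matrices are of the form $E_{ij}$, hence of rank $1$; by Theorem~\ref{thm:rank-1_degdet} we already have $\deg\det A=\deg\Det A$, so it suffices to run the steepest descent algorithm for $\deg\Det$. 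Since every submatrix of a mixed polynomial matrix is again mixed polynomial, the invariants $\delta_k$, and hence the Smith--McMillan exponents $\alpha_1,\dots,\alpha_n$, are the same whether computed with $\det$ or $\Det$; in particular the iteration count of Lemma~\ref{lem:iterations} is $\alpha_1-\alpha_n=\ell-\alpha_n$ for nonsingular $A$, and is at most $\ell n$ in general.

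The core of the argument is the analysis of a single iteration. As in the linear matroid intersection discussion of Section~\ref{subsec:classical}, the current module $(L,M)$ can be maintained in the partitioned form $(\langle (t^{\alpha})G\rangle_{\rm L},\langle H(t^{\beta})\rangle_{\rm R})$ with $G,H$ block-diagonal with respect to the partitions of the coordinates into the level sets of $\alpha$ and of $\beta$. Then the local linear matrix $(PAQ)^0$ of MVSP$^{P,Q}$ decomposes blockwise, and on each block it equals one constant matrix (coming from the accurate part $\sum_kQ_kt^k$) plus a sum of terms $x_v\,u_vw_v^{\top}$ with $u_v,w_v$ supported in that block (coming from the generic part $\sum_kT_kt^k$), the $x_v$ being a subset of the original, pairwise distinct variables. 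Thus $(PAQ)^0$ is again a linear matrix whose variable-coefficients have rank $1$, and MVSP$^{P,Q}$ for it is a linear matroid intersection problem of size $O(n)$; an mv-subspace --- which by the block structure can be chosen of the direct-sum form needed to keep $(L,M)$ partitioned at the next step --- is computed in $O(n^3\log n)$ time by the independent matching algorithm. Moreover Step~3 only updates the accurate part, a polynomial matrix of degree $O(\ell)$ (i.e.\ $O(\ell)$ matrices over $\QQ$, multiplied by the block-diagonal bases of the chosen mv-subspace), together with the combinatorial pattern of the generic part; this costs $O(\ell n^{\omega})$ per iteration, and $O((\ell-\alpha_n)n^{\omega})$ per iteration after the initial rescaling in the nonsingular case. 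In particular the factor $m$ of Theorem~\ref{thm:degDet_algo} never appears.

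Finally I would assemble the estimates. In the nonsingular case there are $\ell-\alpha_n$ iterations, contributing $O((\ell-\alpha_n)\,n^3\log n)$ for the MVSP calls and $\sum_{d\le\ell-\alpha_n}O(d\,n^{\omega})=O((\ell-\alpha_n)^2n^{\omega})$ for the matrix bookkeeping, yielding $O((\ell-\alpha_n)n^3\log n+(\ell-\alpha_n)^2n^{\omega})$; in general there are at most $\ell n$ iterations with carried degree $O(\ell n)$, so the bookkeeping costs $O(\ell n\cdot\ell n\cdot n^{\omega})=O(\ell^2n^{\omega+2})$ and the matroid-intersection calls are of lower order, giving $O(\ell^2n^{\omega+2})$. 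The step I expect to be the real obstacle is the middle one: rigorously verifying that the steepest descent updates preserve the partitioned structure --- so that every local problem stays a small, linear-matroid-intersection instance and the generic part is never carried as $m$ separate matrices --- and that the matroid-intersection solver can be arranged to return an mv-subspace of the required direct-sum form. This is exactly the point at which the correspondence with the classical matroid intersection algorithms and with the combinatorial relaxation method for $\deg\det$ is doing the work.
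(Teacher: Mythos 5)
The running-time accounting and the high-level plan (specialize Theorem~\ref{thm:degDet_algo} by pinning down $\gamma$ and eliminating the factor $m$) are correct, but your proposal omits the device that makes both of these work in the paper: the reduction of the mixed polynomial matrix $Q+T$ to the \emph{layered mixed matrix}
$\left(\begin{smallmatrix} Q & I \\ T & D \end{smallmatrix}\right)$
of~(\ref{eqn:LM}). This is not cosmetic. Your plan is to argue directly on $(PAQ)^0$ that MVSP$^{P,Q}$ is a small matroid intersection and that the module $(L,M)$ retains a partitioned form $(\langle (t^\alpha)G\rangle_{\rm L},\langle H(t^\beta)\rangle_{\rm R})$ as in Section~\ref{subsec:classical}. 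But the block-decomposition argument there leans crucially on the linear matrix having \emph{no constant part}: every summand $x_i a_i b_i^\top$ is rank~$1$ and lands in a single block of $(t^\alpha)SAT(t^\beta)$, which forces the mv-subspace to split as a direct sum compatible with the partition. A mixed polynomial matrix has a genuine accurate part $\sum_k Q_k t^k$ playing the role of $A_0$, and $(t^\alpha) S A_0 T (t^\beta)^0$ has no rank or support restriction; so there is no reason an mv-subspace of MVSP$^{P,Q}$ should respect the partition, and the inductive invariant you hope to maintain is not guaranteed. You flagged exactly this as ``the real obstacle,'' and in your version it remains unresolved.

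The paper resolves it by passing to the $2n\times 2n$ layered form, where the accurate rows $R_Q$ and generic rows $R_T$ are disjoint. For a layered mixed matrix the rank formula~(\ref{eqn:LM-rk-formula}) applies, a minimizer $J$ is found by Cunningham's matroid intersection in $O(n^3\log n)$, and --- this is the crucial structural point, Lemma~\ref{lem:J} --- the resulting mv-subspace is
$X=\ker_{\rm L}Q[J]\oplus\QQ^{R_T\setminus\varGamma(J)}$, $Y=\QQ^J$:
$Y$ is a coordinate subspace and $X$ is arbitrary only on $R_Q$-rows, a coordinate subspace on $R_T$-rows. Consequently the step-$3$ transformation uses a permutation $T$ and a block matrix $S$ that is general only on $R_Q$, so the $T$-part of the layered matrix is touched by monomials and permutations only (preserving ``one distinct variable per entry''), while actual matrix arithmetic is confined to the numeric $Q$-part --- which is exactly what eliminates $m$. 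Without the layered reformulation and Lemma~\ref{lem:J}, the reduction of each local MVSP to matroid intersection and the preservation of structure both need a new argument. If you want to fix your proposal, insert the layered-matrix reduction as step one, replace the appeal to Section~\ref{subsec:classical} by Lemma~\ref{lem:J}, and note that the initialization is $P=I$, $Q=(t^{-\ell\mathbf{1}_{\le n}})$ so that only the $Q$-columns of the layered matrix are rescaled.
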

To prove Theorem~\ref{thm:mixedpoly}, 
we will work on a mixed matrix of a special form, as in \cite{IwataOkiTakamatsu17}.
A {\em layered mixed (polynomial) matrix}~\cite{MurotaBook}  
is a mixed (polynomial) matrix $A$ of form
\begin{equation}\label{eqn:LM0}
A = \left( \begin{array}{c}
Q \\
T
\end{array}\right),
\end{equation}
where $Q$ is a matrix over $\QQ[t]$ 
and $T$ is a variable matrix as above.
It is well-known in the mixed-matrix literature
that the rank and deg-det computation of a mixed matrix $Q+T$
reduce to those of a layered one
\begin{equation}\label{eqn:LM}
\left(\begin{array}{cc}
Q & I \\
T & D
\end{array}\right),
\end{equation}
where $D$ is a diagonal matrix of new variables.  

To compute a steepest direction, we need an mv-subspace of 
a layered  mixed (nonpolynomial) matrix, which is naturally obtained 
from a min-max formula of the rank.
Let $A$ in (\ref{eqn:LM0}) 
be an $n \times n'$ layered mixed matrix.
For $J \subseteq \{1,2,\ldots,n'\}$, 
let $Q[J]$ denote the submatrix of $Q$ consisting of $j$-th columns for $j \in J$, and
let $\varGamma(J)$ 
denote the set of indices $i$ with $T_{ij} \neq 0$ for some $j$.
\begin{Thm}[{\cite[Theorem 4.2.5]{MurotaMatrix}}]
	For an $n \times n'$ layered mixed matrix $A$ in $(\ref{eqn:LM0})$.
	\begin{equation}\label{eqn:LM-rk-formula}
	\rank A = \min_{J \subseteq \{1,2,\ldots,n'\}} 
	\{ \rank Q[J] + |\varGamma(J)| - |J| \} + n'.
	\end{equation}
\end{Thm}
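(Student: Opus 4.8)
The plan is to bracket $\rank A$ between the right-hand side of (\ref{eqn:LM-rk-formula}) from above and the rank of a suitable matroid union from below, and then to check that these two quantities coincide. Throughout put $E := \{1,2,\ldots,n'\}$. For the upper bound, fix $J \subseteq E$ and split the columns of $A$ into those indexed by $J$ and those indexed by $E \setminus J$; subadditivity of the rank over this column partition gives $\rank A \le \rank A[J] + (n' - |J|)$, where $A[J]$ is the submatrix on the columns in $J$. Since $A[J]$ is $Q[J]$ stacked over $T[J]$ and $T[J]$ has an all-zero row outside $\varGamma(J)$, we get $\rank A[J] \le \rank Q[J] + |\varGamma(J)|$. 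Hence $\rank A \le \rank Q[J] + |\varGamma(J)| - |J| + n'$ for every $J$, which is the inequality ``$\le$'' in (\ref{eqn:LM-rk-formula}).

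For the matching lower bound, let ${\bf M}_Q$ be the linear matroid on $E$ represented by the columns of $Q$, let ${\bf M}_T$ be the transversal matroid on $E$ for the bipartite graph joining row $i$ of $T$ to column $j$ whenever $T_{ij} \ne 0$, and write $\rho_Q(J) = \rank Q[J]$, $\rho_T$ for their rank functions, and $\rho_\vee$ for the rank function of the union ${\bf M}_Q \vee {\bf M}_T$. By the matroid union theorem (see, e.g., \cite{KorteVygen}), $\rho_\vee(E) = \min_{J \subseteq E}\{(n' - |J|) + \rho_Q(J) + \rho_T(J)\}$. I would first verify that this minimum equals $\min_{J \subseteq E}\{(n' - |J|) + \rho_Q(J) + |\varGamma(J)|\}$, i.e.\ the right-hand side of (\ref{eqn:LM-rk-formula}): ``$\le$'' is immediate from $\rho_T(J) \le |\varGamma(J)|$, and for ``$\ge$'' one uses the defect form of K\"onig's theorem, $\rho_T(J) = \min_{K \subseteq J}(|J \setminus K| + |\varGamma(K)|)$, takes the optimal $K \subseteq J$, and applies monotonicity $\rho_Q(K) \le \rho_Q(J)$ to replace $J$ by $K$ in the objective. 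It then remains to produce a nonsingular $r \times r$ submatrix of $A$ with $r := \rho_\vee(E)$. By the matroid union theorem there is a column set $C$ with $|C| = r$ that splits as $C = C_1 \sqcup C_2$, with $C_1$ independent in ${\bf M}_Q$ and $C_2$ independent in ${\bf M}_T$. Choosing rows $R_Q$ of $Q$ with $Q[R_Q, C_1]$ nonsingular and a matching saturating $C_2$ into rows $R_T$ of $T$, I would take the $r \times r$ submatrix of $A$ on rows $R_Q \cup R_T$ and columns $C_1 \cup C_2$.

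The step I expect to be the main obstacle is showing that this submatrix is nonsingular, i.e.\ that $\rank A \ge r$. The idea is to examine the Leibniz expansion of its determinant and single out the monomial in the variables of $T$ coming from the chosen matching on the $C_2$-by-$R_T$ block: this monomial appears with coefficient $\pm \det Q[R_Q, C_1] \ne 0$, and, because each variable occurs in exactly \emph{one} entry of $T$, no competing permutation can produce the same monomial, so it is not cancelled and the determinant is a nonzero element of the rational function field. Turning this into a rigorous argument --- bookkeeping which permutations contribute to a fixed monomial and ruling out cancellation --- is exactly where the hypotheses on $T$ (its nonzero entries are distinct indeterminates, each used once) are essential; the remainder is the routine manipulation of the matroid-union and K\"onig rank formulas sketched above.
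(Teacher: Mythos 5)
The paper does not prove this statement; it is cited verbatim from Murota's book \cite[Theorem 4.2.5]{MurotaMatrix}, so there is no in-paper proof to compare against. Your proof is correct and is the standard matroid-union argument that underlies the cited result. Briefly checking the steps: the upper bound via the column split $J$, $E\setminus J$ and the zero rows of $T[J]$ outside $\varGamma(J)$ is fine; the matroid-union rank formula $\rho_\vee(E)=\min_{J}\{(n'-|J|)+\rho_Q(J)+\rho_T(J)\}$ is correctly invoked; and the reduction of $\rho_T(J)$ to $|\varGamma(K)|$ via K\"onig's defect formula together with monotonicity $\rho_Q(K)\le\rho_Q(J)$ correctly identifies the two minima. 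For the lower bound, your choice of $C_1\sqcup C_2$, $R_Q$, $R_T$ gives a square $r\times r$ submatrix, and the genericity argument is sound: since the nonzero entries of $T$ are distinct indeterminates each appearing exactly once, any permutation whose $T$-part produces the monomial $\prod_{j\in C_2}T_{\sigma(j),j}$ must agree with $\sigma^{-1}$ on $R_T$, so the coefficient of that monomial is exactly $\pm\det Q[R_Q,C_1]\ne 0$ with no possibility of cancellation. The only thing worth making fully explicit, which you gesture at but could state, is that the variable content of a Leibniz term comes solely from the $R_T$-rows (the $Q$-rows contribute scalars), which is what pins down the $T$-part of every contributing permutation. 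With that spelled out, the proof is complete and matches the spirit of the proof in the cited reference.
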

Let $R_Q$ and $R_T$ 
denote the sets of row indices of matrices $Q$ and $T$, respectively.
\begin{Lem}\label{lem:J}
	Let $J$ be a minimizer of $(\ref{eqn:LM-rk-formula})$.
	Let $X := \ker_{\rm L} Q[J] \oplus \QQ^{R_T \setminus \varGamma(J)}$ 
	and $Y := \QQ^{J}$.
	Then $(X,Y)$ is an mv-subspace.
\end{Lem}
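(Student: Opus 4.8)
The plan is to show directly that the pair $(X,Y)$ is feasible for MVSP and that its value $\dim X + \dim Y$ equals the optimum $n + n' - \ncrank A$ furnished by the Fortin--Reutenauer formula (Theorem~\ref{thm:FortinReutenauer}); together these give that $(X,Y)$ is an mv-subspace.

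For feasibility I would argue as follows. Write a generic element of $X$ as $x = (x_Q, x_T)$ with $x_Q \in \ker_{\rm L} Q[J]$ and $x_T$ supported on $R_T \setminus \varGamma(J)$, and take $y \in Y = \QQ^J$, so that $y$ is supported on the columns indexed by $J$. Since $y \in \QQ^J$ we have $Qy = Q[J]\,(y|_J)$ and $Ty = T[J]\,(y|_J)$, where $Q[J]$ and $T[J]$ denote the column-restrictions to $J$ and $y|_J$ is the restriction of $y$ to the coordinates in $J$. Hence $xAy = x_Q Q[J](y|_J) + x_T T[J](y|_J)$, and here the first summand is $0$ because $x_Q Q[J] = 0$ by definition of the left kernel, while the second is $0$ because every row of $T[J]$ indexed by $R_T \setminus \varGamma(J)$ is identically zero, again by the definition of $\varGamma(J)$. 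Thus $xAy$ is the zero element, so $A(X,Y) = \{0\}$ (equivalently, each coefficient matrix of $A$ vanishes on $X \times Y$) and $(X,Y)$ is feasible.

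For the value I would compute $\dim X = (|R_Q| - \rank Q[J]) + (|R_T| - |\varGamma(J)|) = n - \rank Q[J] - |\varGamma(J)|$, using $|R_Q| + |R_T| = n$, together with $\dim Y = |J|$, so that $\dim X + \dim Y = n - (\rank Q[J] + |\varGamma(J)| - |J|)$. Since $J$ minimizes the quantity appearing in (\ref{eqn:LM-rk-formula}), the bracketed term equals $\rank A - n'$, and therefore $\dim X + \dim Y = n + n' - \rank A$. To conclude, recall that every feasible pair of MVSP satisfies $\dim X + \dim Y \le n + n' - \ncrank A$ by Theorem~\ref{thm:FortinReutenauer}, while $\ncrank A \ge \rank A$ always holds; since our $(X,Y)$ already attains $n + n' - \rank A$, all these inequalities must collapse to equalities, so $(X,Y)$ is optimal for MVSP, i.e.\ an mv-subspace (and, as a byproduct, $\ncrank A = \rank A$ for layered mixed matrices, which also follows from Theorem~\ref{thm:rank-1} since their summands have rank one). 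The argument is routine; the only point demanding care is keeping the index sets $R_Q, R_T, J, \varGamma(J)$ straight so that the correct blocks of $Q$ and $T$ are recognised as annihilated, and I do not expect any substantive obstacle beyond this bookkeeping.
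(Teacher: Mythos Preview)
Your proof is correct and follows essentially the same route as the paper's. The paper's proof consists of the single dimension identity $n + n' - \dim X - \dim Y = \rank Q[J] + |\varGamma(J)| - |J| + n' = \rank A$, leaving both the feasibility check and the appeal to Theorem~\ref{thm:FortinReutenauer} implicit; you have simply made these two steps explicit, which is entirely appropriate.
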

\begin{proof} This follows from 
$n + n' - \dim X - \dim Y = n + n' - |R_Q| + \rank Q[J] - |R_T| + |\varGamma(J)| - |J| =   \rank Q[J] + |\varGamma(J)| - |J| + n'$.	
\end{proof}

\begin{proof}[Proof of Theorem~\ref{thm:mixedpoly}]
	For a mixed matrix $Q+T$, we compute the degree of the determinant of
	the corresponding layered mixed matrix (\ref{eqn:LM}).
    As an initialization (step 0), 
    $(P, Q)$ is defined as $P = I$ and $Q = (t^{- \ell {\bf 1}_{\leq n}})$, 
    and let $A \leftarrow PAQ$. 
	In step 1, 
	SDA computes an mv-subspace of layered mixed matrix $A^0$.	
	A minimizer $J$ of (\ref{eqn:LM-rk-formula}) 
    is obtained by Cunningham's matroid intersection algorithm~\cite{Cunningham86} 
    in $O(n^3 \log n)$ time. Namely $\gamma = O(n^3 \log n)$.
    In step 3, the matrix multiplication is needed only for the $Q$-part of 
    $A$, which eliminates $m$ in the time complexity of Theorem~\ref{thm:degDet_algo}. 
\end{proof}

\paragraph{Application to DAE.}
A motivating application of mixed polynomial matrices 
is analysis of linear {\em differential algebraic equations (DAE)} 
with constant coefficients, where
each coefficient is an accurate number 
or one of (inaccurate) parameters $x_1,x_2,\ldots, x_m$, 
and no parameter appears as distinct coefficients; see \cite[Chapter 6]{MurotaBook}.
By the Laplace transformation,  
the analysis of such a DAE reduces to
linear equation $A x = b$, where 
$A$ is a mixed polynomial matrix over $\RR[s]$
with variables $x_1,x_2,\ldots,x_m$. 
Suppose the case where matrix $A$ is a square matrix.
The {\em index} is a barometer of ``difficulty" of DAE $Ax = b$, and is defined as $- \alpha_n + 1$, 
where $\alpha_n$ is the minimum degree of the Smith-McMillan form of $A$.
A DAE with high index ($\geq 2$) is difficult to solve numerically, 
and suggests an inconsistency of the mathematical modeling in deriving this DAE.
Therefore it is meaningful to decide whether the index of given a DAE is at most 
the limit $\varDelta$. 
Here $\varDelta (\simeq 2)$
is the allowable upper bound for 
the index of DAE-models of the system we want to analyze.
The steepest descent algorithm can decide 
in $O((\ell + \varDelta) n^3 \log n + (\ell+ \varDelta)^2 n^{\omega})$ time
whether DAE $Ax = b$ has index at most $\varDelta$. 
Indeed, apply SDA to $A$.
Index $- \alpha_n + 1$ is obtained from 
the number $\ell - \alpha_n$ of required 
iterations (Lemma~\ref{lem:iterations}).
If SDA terminates before $\ell + \varDelta$ iterations,
then the DAE has index within $\varDelta$.
Otherwise the index is over the limit $\varDelta$.

\section*{Acknowledgments}
The author thanks Kazuo Murota, Satoru Iwata, Yuni Iwamasa,  Taihei Oki  and Koyo Hayashi 
for careful reading and helpful comments, and thanks Mizuyo Takamatsu for remarks.
Also the author thanks the referees for helpful comments.
This work was supported by JSPS KAKENHI Grant Numbers 
JP26280004,  JP17K00029.

\appendix
\section{Appendix}
\subsection{Relation to the formulation by Hamada and Hirai}\label{subsec:HH}
	Hamada and Hirai~\cite{HamadaHirai17} actually studied 
	the following variant of MVSP: We are given a matrix $A \in \KK^{m \times n}$ 
	partitioned into submatrices as
	\[
	A = \left(
	\begin{array}{ccccc}
	A_{11} & A_{12} &\cdots & A_{1\nu} \\
	A_{21} & A_{22} &\cdots & A_{2\nu} \\
	\vdots & \vdots & \ddots & \vdots \\
	A_{\mu1}&A_{\mu2} &\cdots & A_{\mu \nu}
	\end{array}\right),
	\]
	where $A_{\alpha \beta} \in \KK^{m_\alpha \times n_\beta}$ 
	for $\alpha=1,2,\ldots,\mu$ and
	$\beta=1,2,\ldots,\nu$. 
	The goal is to find a collection of vector subspaces 
	$X_\alpha \subseteq \KK^{m_{\alpha}}$, 
	$Y_{\beta}  \subseteq \KK^{n_{\alpha}}$ 
	$( \alpha=1,2,\ldots,\mu;  \beta=1,2,\ldots,\nu)$ such that
	\begin{equation}\label{eqn:vanishing}
	A_{\alpha \beta} (X_{\alpha}, Y_{\beta}) = \{0\} \quad 
	(\alpha=1,2,\ldots,\mu; \beta=1,2,\ldots,\nu), 
	\end{equation}
	and the sum of their dimensions
	\begin{equation}\label{eqn:dimension}
	\sum_{\alpha = 1}^{\mu} \dim X_{\alpha} + \sum_{\beta = 1}^{\nu} \dim Y_{\beta} 
	\end{equation}
	is maximum. 
	Hamada and Hirai refer to this problem as MVSP. 
	We here call it {\em block-MVSP}
	
	Block-MVSP  reduces to MVSP (in our sense).
	Indeed, 
	introduce a new variable $x_{\alpha \beta}$ for each $\alpha,\beta$, 
    and consider MVSP for the linear matrix
	\[
	\tilde A = \left(
	\begin{array}{ccccc}
	A_{11}x_{11} & A_{12}x_{12} &\cdots & A_{1\nu}x_{1\nu} \\
	A_{21}x_{21} & A_{22}x_{22} &\cdots & A_{2\nu}x_{2\nu} \\
	\vdots & \vdots & \ddots & \vdots \\
	A_{\mu1}x_{\mu1}&A_{\mu2}x_{\mu2} &\cdots & A_{\mu \nu}x_{\mu \nu}
	\end{array}\right),
	\]
	Observe that any mv-subspace $(X,Y)$ is necessarily
	a form of $(X_1 \oplus X_2 \oplus \cdots \oplus X_{\mu}, Y_1 \oplus Y_2 \oplus \cdots \oplus Y_{\nu})$, 
	where $X_{\alpha}, Y_{\beta}$ satisfy (\ref{eqn:vanishing}) and maximize (\ref{eqn:dimension}).
	(Consider the projections $X_{\alpha}$ of $X$ 
	and $Y_{\beta}$ of $Y$ to the coordinate subspaces corresponding to the partitions. Then 
	$(\oplus_{\alpha} X_\alpha, \oplus_{\beta} Y_\beta)$ is 
	also feasible to MVSP 
	with $X \subseteq \oplus_{\alpha} X_\alpha$ and $Y \subseteq \oplus_{\beta} Y_\beta$.)
	
	The converse reduction is also possible.
	For a linear $n \times n'$ matrix $A = A_0 + \sum_{i=1}^{m} A_i x_i$, consider the block matrix
	\begin{equation}\label{eqn:converse}
	\left(
	\begin{array}{ccccc}
	A_{0} & I &    &   \\
	A_{1} & I & \ddots &   \\
	\vdots &   & \ddots & I \\
	A_{m} &  &    & I
	\end{array}\right),
	\end{equation}
	where the unfilled blocks are 
	zero matrices and $I$ is the $n$ by $n$ unit matrix.
	For a solution $(X,Y)$ of MVSP for $A$, 
	the collection 
	of subspaces $X_{\alpha}$, $Y_{\beta}$ defined by
	\begin{eqnarray*}
		&& X_{\alpha} := X \quad (\alpha =1,\ldots, m+1), \\
		&& Y_1 := Y,\ Y_{\beta} := X^{\bot} \quad (\beta =2,\ldots, m+1) 
	\end{eqnarray*}
	is a solution of the block-MVSP for the matrix (\ref{eqn:converse}), and has the sum of dimensions
	\[
	(m+1) \dim X + \dim Y + m (n- \dim X) = \dim X + \dim Y + mn.
	\]
	
	In fact, we can always choose an optimal solution with such a form in this block-MVSP.
	Let $X_{\alpha}, Y_{\beta}$ ($\alpha,\beta =1,\ldots,m+1$) be an optimal solution of the block-MVSP.
	Necessarily $Y_{\alpha+1} = X_{\alpha}^{\bot} \cap X_{\alpha+1}^{\bot}$ 
	holds for $\alpha = 1,2,\ldots,m$. 
	Then it holds
	\begin{eqnarray*}
		&& \dim X_{\alpha} + \dim X_{\alpha+1} + \dim Y_{\alpha+1}  \\
		&& = \dim X_{\alpha} \cap X_{\alpha+1} + \dim (X_{\alpha} + X_{\alpha+1})  + \dim  X_{\alpha}^{\bot} \cap X_{\alpha+1}^{\bot} \\
		&& = \dim X_{\alpha} \cap X_{\alpha+1} + \dim (X_{\alpha} + X_{\alpha+1})  + \dim  (X_{\alpha} + X_{\alpha+1})^{\bot} \\
		&& = \dim X_{\alpha} \cap X_{\alpha+1} + n \\
		&& = 2 \dim X_{\alpha} \cap X_{\alpha+1} + \dim (X_{\alpha} \cap X_{\alpha+1})^{\bot}.
	\end{eqnarray*}
	Hence $(X_{\alpha}, X_{\alpha+1}, Y_{\alpha+1})$ 
	can be replaced by $( X_{\alpha} \cap X_{\alpha+1}, X_{\alpha} \cap X_{\alpha+1},  (X_{\alpha} \cap X_{\alpha+1})^{\bot})$.
	This implies the existence of an optimal solution in which all $X_{\alpha}$ 
	are equal.
	In particular, $(X,Y) := ( X_1 \cap \cdots \cap X_{m+1},Y_1)$ is an mv-subspace of MVSP for $A$.
	
	As was noted in \cite[Remark 3.13]{HamadaHirai17}, 
	without such a reduction, 
	their approach and algorithm
	are quickly adapted to MVSP (in our sense).

\subsection{Proof of Lemma~\ref{lem:L(F(t)^n)}}
    Let ${\cal L} := {\cal L}_{\rm R}(\FF(t)^n)$. 
    We omit ${\rm R}$ from $\langle \cdot \rangle_{\rm R}$.
    We first consider the join and meet of two $L, M \in {\cal L}$.
    Suppose that  $L = \langle P \rangle$ and $M = \langle Q \rangle$. 
    Then $L \subseteq M$ if and only if $Q^{-1}P$ is proper.
    From we see that $t^{- \ell} L \subseteq L \cap M \subseteq L + M \subseteq t^\ell L$ for a large $\ell> 0$.
    Both $L \cap M$ and $L + M$ are submodules of 
    the free module $t^{\ell}L$ over PID $\FF(t)^-$.
    Hence both $L \cap M$ and $L + M$ are free. 
    Since they contain the full-rank free module $t^{- \ell} L$, they are also full-rank, and hence  belong to ${\cal L}$.  
    Necessarily $L \wedge M = L \cap M$ and $L \vee M = L + M$.
      
     We show the modularity $L + (M \cap L') = (L + M) \cap L'$ 
     for $L' \in {\cal L}$ with $L \subseteq L'$.  
     It suffices to show $(\supseteq)$.
     Let $z = x + y \in (L+ M) \cap L'$ with $x \in L$ and $y \in M$.
    By $z,x \in L'$, it holds $y = z - x \in L'$, and $y \in M \cap L'$. 
    Thus  $z = x+y \in L + (M \cap L')$, as required.
    Note that this argument is standard for proving that normal subgroups of a group forms a modular lattice.
    
    We next show that $\deg$ is a unit valuation.
    Here we can assume that
    $M = \langle Q \rangle = \langle P (t^{\alpha}) \rangle$, 
    where $(t^{\alpha})$ is the Smith-McMillan form of $P^{-1} Q$. 
    Indeed, If $S^{-1}P^{-1}QT = (t^{\alpha})$ for biproper $S,T$, 
    then $P$ and $Q$ can be replaced by $PS$ and $QT$, respectively, since $\langle P \rangle =\langle PS \rangle$ and $\langle Q \rangle =\langle QT \rangle$. 
    Therefore, if 
    $L$ is covered by $M$, then $\alpha = e_1$, and 
    $\deg M - \deg L = \deg \Det P(t^{e_1})- \deg \Det P = 1$．
    
    Also $\langle t P \rangle$ is the join of 
    $\langle P (t^{e_i}) \rangle$ for $i=1,2,\ldots,n$, 
    and the ascending operator is given by $L \mapsto t L$, 
    which is clearly an automorphism on ${\cal L}$.
    This concludes that ${\cal L}$ is a uniform modular lattice.

\subsection{Proof of Lemma~\ref{lem:apartment}}
  We continue the above notation of omitting ${\rm R}$.
  Consider two short chains $C,D$.
  By (B1) in Lemma~\ref{lem:skeleton}, 
  there is a $\ZZ^n$-skeleton $\varSigma$ containing them.
  Identify $\varSigma$ with $\ZZ^n$. 
  If $L \in \varSigma$ corresponds to $x \in \ZZ^n$, then we write $L \equiv x$.
  We may assume that $C,D$ belong to interval $[0,k]^n \subseteq \ZZ^n$.
  It suffices to show that the interval $[0,k]^n$ belongs to $\varSigma(P)$ 
  for some nonsingular matrix $P$.
  Consider $L \in \varSigma$ with $L \equiv {\bf 0} \in \ZZ^n$.
  We show by an inductive argument 
  that there are $p_1,p_2,\ldots,p_n \in L$ 
  such that $L = \langle (p_1\ p_2 \cdots\ p_n) \rangle$ 
  and  $\langle(p_1\ \cdots\ t^{\ell} p_{i}\ \cdots\ p_n) \rangle \equiv \ell e_i$ for each $i$ and $\ell \leq k$.
  Then $P = (p_1\ p_2\ \cdots\ p_n)$ is a desired matrix.
  Indeed, for  $\alpha \in [0,k]^n$, 
  the join $M$ of $\langle(p_1\ \cdots\ t^{\alpha_i} p_{i}\ \cdots\ p_n)\rangle$ over $i=1,2,\ldots, n$ corresponds to $\alpha$.
  Obviously $\langle P (t^{\alpha}) \rangle \subseteq M$.
  Since $M$ and $\langle P (t^{\alpha}) \rangle$ have 
  the same height $\deg L + \alpha_1 + \alpha_2 + \cdots + \alpha_n = \deg \det P (t^{\alpha})$, it must hold 
  $\langle P (t^{\alpha})  \rangle = M \equiv \alpha$.
  Thus the interval $[0,k]^n$ belongs to $\varSigma(P)$.

  Suppose that $L = \langle (q_1\ q_2 \cdots\ q_n) \rangle$. 
  By Lemma~\ref{lem:[L,tL]}~(4), 
  for each $i$ there is $p_i \in L$ such that 
  $e_i \equiv p_i t \FF(t)^- + L$. 
  Then $p_1,p_2,\ldots,p_n$ form an $\FF(t)^-$-basis of $L$, 
  i.e., $L = \langle (p_1\ p_2 \cdots\ p_n) \rangle$. 
  Indeed, 
  by $\sum_{i} p_i t \FF(t)^- + L \equiv e_1 + e_2 + \cdots + e_n = {\bf 1} \equiv t L$,
  each $tq_j$ is written as an $\FF(t)^-$-linear combination 
  of $tp_1,tp_2,\ldots,tp_n$ and $q_1,q_2,\ldots,q_n$.
  Consequently, $(q_1\ q_2\ \cdots\ q_n )(I + t^{-1}A) = (p_1\ p_2\ \cdots \ p_n)B$ holds for some square matrices $A,B$ over $\FF(t)^-$.
  Here $I + t^{-1}A$ is biproper (Lemma~\ref{lem:key}).
  This means that each $q_j$ is written as an 
  $\FF(t)^-$-linear combination of $p_1,p_2,\ldots,p_n$.
  Then $L = \langle (p_1\ p_2 \cdots\ p_n) \rangle$ 
  and $e_i \equiv  \langle (p_1\ \cdots\ tp_i\ \cdots\ p_n) \rangle = p_i t \FF(t)^- + L$ for each $i$.

  Suppose (by induction) that $\langle (t^\ell p_1\ p_2\ \cdots\ p_n) \rangle \equiv \ell e_1$ holds for $\ell \leq k-1 (\geq 1)$.
  We show that $p_1$ can be replaced by some $p'_1$
  such that  $\langle (t^\ell p'_1\ p_2\ \cdots\ p_n) \rangle \equiv \ell e_1$ holds for $\ell \leq k$.
  Consider $L' \in \varSigma$ with $L' \equiv k e_1$.
  By Lemma~\ref{lem:[L,tL]}~(4), 
  $L'$ is generated by vectors 
  obtained by replacing one of $t^{k-1} p_1,p_2,\ldots, p_n$
  with $r = t (t^{k-1} p_1 \lambda_1 + \sum_{i=2}^{n} p_i \lambda_i)$
  for $\lambda_i \in \FF$.
  Now $\lambda_1 \neq 0$. Indeed, if $\lambda_1 =0$ and $\lambda_2 \neq 0$ (say), then
  $L' = \langle (t^{k-1} p_1\ r\ \cdots\ p_n) \rangle \subseteq 
  t \langle(t^{k-2} p_1\ p_2\ \cdots\ p_n)\rangle$, 
  implying a contradiction $k e_1 \leq (k-2) e_1 + {\bf 1}$.
  Let $p'_1 := t^{-k} r$.
  Thus $L' = \langle (t^{k} p'_1\ p_2\ \cdots\ p_n)\rangle$.
  Also observe $\langle (t^{\ell} p'_1\ p_2\ \cdots\ p_n) \rangle 
  = \langle (t^{\ell} p_1\ p_2\ \ldots\  p_n) \rangle$ for $\ell \leq k-1$.
  Replace $p_1$ by $p'_1$, and apply the same replacement 
  for $p_2,p_3,\ldots,p_n$. Then we obtain 
  desired $p'_1,p'_2,\ldots,p'_n$.

\subsection{Proof of Proposition~\ref{prop:SmithMcMillan}}\label{app:SM}

The degree of the determinant $\deg \Det$ is a {\em matrix valuation} 
in the sense of \cite[Section 9]{CohnSkewField} 
(with min and max interchanged); 
see Theorem 9.3.4 of the reference.
In particular, the $\deg \Det$ function satisfies the following property
((MV.4) in \cite[Section 9.3]{CohnSkewField}):
\begin{description}
	\item[{\rm (MV)}] For nonsingular $A \in \FF(t)^{n \times n}$ 
	and a vector $b \in \FF(t)^n$ regarded as a row (or column) vector, 
	let $B$ be the matrix obtained from $A$ by  replacing 
	the first row (or column) by $b$, and
	let $C$ be the matrix obtained from $A$ by adding $b$ to the first row (or column) vector.	
	Then it holds 
	\[
	\deg \Det C \leq \max \{ \deg \Det A, \deg \Det B \}.
	\] 
	The strict inequality holds only if $\deg \Det A = \deg \Det B$. 
\end{description}
In \cite{CohnSkewField}, only the column version  
is proved but the row version can be proved in the same way.
Indeed, 
	by column permutation, we can make $A$ (and $B,C$) so that 
	the cofactor $A'$ of $(1,1)$-entry is nonsingular.
	Then we have
	\begin{equation*}
	A = 
	\left(\begin{array}{cc}
	a_{11}  & a'  \\
	0 & A' 
	\end{array}\right)E, \ 
	B = 
	\left(\begin{array}{cc}
	b_{11}  & b'  \\
	0 & A' 
	\end{array}\right)E,\ 
	C = 
	\left(\begin{array}{cc}
	c_{11}  & c'  \\
	0 & A' 
	\end{array}\right)E,  
	\end{equation*}
	where $E$ is the product 
	of permutation matrices and upper unitriangular matrices, 
	and $(c_{11}\ c') = (a_{11}\ a') + (b_{11}\ b')$.
	Thus $\deg \Det A = \deg a_{11} + \deg \Det A'$,
	$\deg \Det B = \deg b_{11} + \deg \Det A'$, and
	$\deg \Det C = \deg c_{11} + \deg \Det A' = \deg (a_{11}+ b_{11}) + \deg \Det A'$.
	From $\deg (a_{11} + b_{11}) \leq \max \{ \deg a_{11}, \deg b_{11} \}$, we obtain (MV).

Now 
let us start to prove Proposition~\ref{prop:SmithMcMillan}.
For $u \in \FF(t)$ and $k,\ell \in \{1,2,\ldots,n\}$ with $k \neq \ell$, 
define $E(k,\ell;u) \in \FF(t)^{n \times n}$ by
\begin{equation*}
	E(k,\ell;u)_{ij} = \left\{
	\begin{array}{ll}
	1 & {\rm if}\ i=j, \\
	u & {\rm if}\ i=k, j = \ell, \\
	0 & {\rm otherwise}. 
	\end{array}\right.
	\end{equation*}
	$E(k,\ell;u)$ is called an {\em elementary matrix}.
	Observe that $E(k,\ell;u)$ is nonsingular 
	with $E(k,\ell;u)^{-1} = E(k,\ell;-u)$.
	In particular, $E(k,\ell;u)$ is biproper if and only
	if $u \in \FF(t)^-$.
	
	A required diagonalization is obtained as follows.
	First, by multiplying permutation matrices to the left and the right of $A$,
	modify $A$ so that $A_{{11}}$ has the maximum degree among all entries of $A$.
	By multiplying elementary matrices $E(1,\ell;u)$ from right and $E(\ell',1; u')$ from left, 
	modify $A$ so that all entries except $A_{11}$ in the first row and column 
	are zero. Here $u,u'$ can be taken from $\FF(t)^-$ by the maximality.
	Therefore $E(1,\ell;u)$ and $E(\ell,1';u')$ are biproper, 
	and the degree of entries of $A$ does not increase. 
	Now $A_{11}$ is written as $t^{\alpha_1} v$ for $\alpha_1 = \deg A_{11}$ and
	$v \in \FF(t)^-$ with $\deg v = 0$. 
	Multiply a biproper diagonal matrix 
	whose $(1,1)$-entry is $v^{-1}$ and other diagonals are $1$.
	Then $A_{11}$ is now $t^{\alpha_n}$.  
	Repeat the same process to the submatrix from the second row and column.
	Eventually $A$ is diagonalized to $(t^{\alpha})$ with $\alpha_1 \geq \alpha_2 \geq \cdots \geq \alpha_n$.
	By construction, $P,Q$ are the product of proper elementary matrices 
	and permutation matrices, and are biproper.

	Next we show that $\delta_k$ is invariant throughout the above procedure, which implies the latter part of the claim.
	It is obvious that $\delta_k$ is invariant under any row and column permutation.
	Consider the case of multiplying elementary matrix $E(i,j;u)$ from the right.
	This operation corresponds to 
	adding the $i$-th columns multiplied by $u$ to the $j$-th column.
	Consider a $k \times (k+1)$ submatrix
	having the $i$- and $j$-th columns, 
	and consider the change of its $k \times k$ minors by the multiplication of $E(i,j;u)$.
	Obviously any $k \times k$ minor containing the $i$-th column does not change.
	Consider the $k \times k$ minor not containing the $i$-th column. 
	From the property (MV), the degree of this minor 
	is at most the degree of the original or $\deg u (\leq 0)$ plus the degree of 
	the minor not containing $j$. From this, we see
	that the maximum degree of a $k \times k$ minor of this matrix does not change.
	Consequently $\delta_k$ does not change.
	The proof for the left multiplication is the same.

\begin{thebibliography}{1}
	\small
	\bibitem{Amitsur1966}
	S. A. Amitsur:
	Rational identities and applications to algebra and geometry, 
	{\em Journal of Algebra} {\bf 3} (1966) 304--359. 
	

	\bibitem{Birkhoff}
	G. Birkhoff: {\it Lattice Theory}, 
	American Mathematical Society, New York, 1940; 3rd edn., 
	American Mathematical Society, Providence, RI, 1967.
	
	\bibitem{BruhatTits}
	F. Bruhat, and J. Tits:
	Groupes r\'eductifs sur un corps local,
	{\em Institut des Hautes \'Etudes Scientifiques. Publications Math\'ematiques} 
	{\bf 41}, (1972), 5--251.
		
		
	\bibitem{CohnAlgebra3}
	P. M. Cohn: {\em Algebra. Vol. 3. Second Edition,} 
	John Wiley \& Sons, Chichester, 1991.
			
	\bibitem{CohnSkewField}
	 P. M. Cohn: {\em Skew Fields. Theory of General Division Rings,}
	 Cambridge University Press, Cambridge, 1995.		
			
	\bibitem{Cunningham86}
	W .H. Cunningham: 
	Improved bounds for matroid partition and intersection algorithms, 
	{\em SIAM Journal on Computing} {\bf 15} (1986), 948--957. 
		

		
			
	\bibitem{DerksenMakam17}
	H. Derksen and V. Makam: 
	Polynomial degree bounds for matrix semi-invariants,  
	{\em Advances in Mathematics} {\bf 310} (2017), 44--63. 
			
	\bibitem{Dieudonne1943}
	J. Dieudonn\'e:
	Les d\'eterminants sur un corps non commutatif, 
	{\em Bulletin de la Soci\'et\'e Math\'ematique de France} {\bf 71} 
	(1943). 27--45. 
	
	
 
\bibitem{DressWenzel_greedy}
A. W. M. Dress and W. Wenzel: 
Valuated matroids: a new look at the greedy algorithm, 
{\em Applied Mathematics Letters} {\bf 3} (1990), 33--35. 

\bibitem{DressWenzel}
A. W. M. Dress and W. Wenzel: 
Valuated matroids, {\em Advances in Mathematics} {\bf 93} (1992), 
214--250.

\bibitem{Edmonds67}
J. Edmonds: 
Systems of distinct representatives and linear algebra, 
{\em Journal of Research of the National Bureau of Standards} {\bf 71B} 
(1967) 241--245.

	\bibitem{FortinReutenauer04}
	M. Fortin and C. Reutenauer: 
	Commutative/non-commuative rank of linear
	matrices and subspaces of matrices of low rank, 
	{\em S\'eminaire Lotharingien de Combinatoire} {\bf 52} (2004), B52f.
		
			
			
			\bibitem{Frank81}
			A. Frank: A weighted matroid intersection algorithm, 
			{\em Journal of Algorithms} {\bf 2} (1981), 328--336.
			
			\bibitem{FurueHirai19}
			H. Furue and H. Hirai:
			On a weighted linear matroid intersection algorithm 
			by deg-det computation, {\tt arXiv:1908.11529}.		
				
			
			
	\bibitem{GGOW15}
	A. Garg, L. Gurvits, R. Oliveira, and A. Wigderson:
	Operator scaling: theory and applications,
	{\em Foundations of Computational Mathematics}, (2019).
	\bibitem{Garrett}
	P. B. Garrett: {\em Building and Classical Groups}, Chapman \& Hall, London, 	
	1997.		

			
	\bibitem{NoetherianBook}	
	K. R.  Goodearl and R. B. Warfield, Jr.: 
	 {\em An Introduction to Noncommutative Noetherian Rings. Second Edition,} Cambridge University Press, Cambridge, 2004.
						
	\bibitem{Gratzer}
	G. Gr{\"a}tzer: {\it Lattice Theory: Foundation,} Birkh\"auser, Basel, 2011.
			
			
	\bibitem{Gurvits04}
	L. Gurvits: Classical complexity and quantum entanglement, 
	{\em Journal of Computer and System Sciences}  {\bf 69} (2004), 448--484.
		
		
	

	\bibitem{HamadaHirai17}
	M. Hamada and H. Hirai: 
	Maximum vanishing subspace problem, CAT(0)-space relaxation,
	and block-triangularization of partitioned matrix, 2017,
	{\tt arXiv:1705.02060}.
	
	\bibitem{HH16DM}
	H. Hirai:
	Computing DM-decomposition of a partitioned matrix with rank-1 blocks,
	{\em Linear Algebra and Its Applications} {\bf 547} (2018), 105--123.

	\bibitem{HH17survey}
	H. Hirai: Discrete Convex Functions on Graphs and Their Algorithmic Applications, In: T. Fukunaga and K. Kawarabayashi (eds.) {\em Combinatorial Optimization and Graph Algorithms}, Communications of NII Shonan Meetings, Springer Nature, Singapore, (2017), pp. 67--101.	
		
	\bibitem{HH16L-convex}
	H. Hirai: L-convexity on graph structures, 
	{\em Journal of the Operations Research Society of Japan}
	{\bf 61} (2018), 71--109.
		
	\bibitem{HH18a}
	H. Hirai: Uniform modular lattice and Euclidean building, preprint, 2017, {\tt arXiv:1801.00240}. 
		

		
	\bibitem{ItoIwataMurota94}
	H. Ito, S. Iwata, and K. Murota: 
	Block-triangularizations of partitioned matrices under similarity/equivalence transformations, 
	{\em SIAM Journal on Matrix Analysis and Applications}
	{\bf 15} (1994), 1226--1255. 
	
	\bibitem{IvanyosKarpinskiOiaoSantha15}
	G. Ivanyos, M. Karpinski, and Y. Qiao, and M. Santha: 
	Generalized Wong sequences and their applications 
	to Edmonds' problems, {\em Journal of Computer and System Sciences}
	{\bf 81} (2015), 1373--1386.
	
			
	\bibitem{IKS10}	
	G. Ivanyos, M. Karpinski, and N. Saxena:
	Deterministic polynomial time algorithms for matrix completion problems, 
	{\em SIAM Journal on Computing} {\bf 39} (2010), 3736--3751.	
	
		
	\bibitem{IQS15a}
	G. Ivanyos, Y. Qiao, and  K. V. Subrahmanyam:
	Non-commutative Edmonds' problem and matrix semi-invariants,
	{\it Computational Complexity} {\bf 26} (2017), 717--763.
		
	\bibitem{IQS15b}
	G. Ivanyos, Y. Qiao, and  K. V. Subrahmanyam:
	Constructive noncommutative rank computation in deterministic polynomial time over fields of arbitrary characteristics,
	{\it Computational Complexity} {\bf 27} (2018), 561--593.
	
	\bibitem{IwataKobayashi17}
	S. Iwata and Y. Kobayashi: A Weighted Linear Matroid Parity Algorithm,
    {\em METR} 2017-01, University of Tokyo, 2017.
	(the conference version appeared in STOC' 2017).
	
	\bibitem{IwataMurota95}
	S. Iwata and K. Murota:
	A minimax theorem and a Dulmage-Mendelsohn type decomposition for a class of generic partitioned matrices,
	{\em SIAM Journal on Matrix Analysis and Applications} {\bf 16} (1995), 719--734.	
	
	
	
	
	\bibitem{IwataOkiTakamatsu17}
	S. Iwata, T. Oki and M. Takamatsu:
	Index reduction for differential-algebraic equations with mixed matrices,
	{\em Journal of the ACM}, to appear.
	
	
	\bibitem{IwataTakamatsu13}
	S. Iwata and M. Takamatsu:
	Computing the maximum degree of minors in mixed polynomial
	matrices via combinatorial relaxation, 
	{\it Algorithmica} {\bf 66} (2013), 346--368.
	
	\bibitem{Kailath80}
	T. Kailath: {\it Linear Systems}, Prentice-Hall, Englewood Cliffs, N.J., 1980.  
	
\bibitem{KabanetsImpagliazzo04}
V. Kabanets and R. Impagliazzo: 
Derandomizing polynomial identity tests means proving circuit lower bounds, 
{\em Computational Complexity} {\bf 13} (2004), 1--46.

	\bibitem{KorteVygen}
	B. Korte and J. Vygen: {\em Combinatorial Optimization. Theory and Algorithms. Sixth Edition,} Springer, Berlin 2018.


	\bibitem{KottaBelikovHalasLeibak2017}
\"U. Kotta, J. Belikov, M. Hal\'as and A. Leibak: Degree of Dieudonn\'e determinant defines the order of nonlinear system, 
{\em International Journal of Control} (2017), 1--10.
	
	\bibitem{Lawler75}
E. L. Lawler: Matroid intersection algorithms, 
{\em Mathematical Programming} {\bf 9} (1975), 31--56.

	\bibitem{Lovasz79}
	L. Lov\'asz: On determinants, matchings, and random algorithms, 
    In: {\em Fundamentals of Computation Theory FCT'79
    Proceedings of  Algebraic, Arithmetic. and Categorical Methods in 
    Computation Theory} (L. Budach, et.), Akademie-Verlag, Berlin
	(1979) 565--574.
	
	\bibitem{Lovasz89}
	L. Lov\'asz: 
	Singular spaces of matrices and their application in combinatorics, 
	{\em Boletim da Sociedade Brasileira de Matem\'atica} {\bf 20} (1989), 87--99. 
		
	
		
	\bibitem{Murota90_SICOMP}	
	K. Murota: Computing Puiseux-series solutions to determinantal equations
	via combinatorial relaxation, {\it SIAM Journal on Computing} 
	{\bf 19} (1990), 1132--1161.	
	
	
		
	\bibitem{Murota95_SICOMP}	
	K. Murota: Computing the degree of determinants 
	via combinatorial relaxation,
    {\it SIAM Journal on Computing} {\bf 24} (1995), 765--796.	
		
	\bibitem{Murota98_MPA}
	K. Murota: Discrete convex analysis, 	
	{\it Mathematical Programming Series A} {\bf 83} (1998), 313--371.	
		
	\bibitem{MurotaMatrix}
	K. Murota: {\it Matrices and Matroids for Systems Analysis,} 
	Springer-Verlag, Berlin, 2000.
		
	\bibitem{MurotaBook}
	K. Murota: 
	{\it Discrete Convex Analysis,}
	SIAM, Philadelphia, 2003.

	\bibitem{MurotaIri85}
	K. Murota and M. Iri: Structural solvability of systems of equations--a mathematical formulation for distinguishing accurate and inaccurate numbers in structural analysis of systems---, 
	{\em Japan Journal of Applied Mathematics} {\bf 2} (1985), 247--271. 
	
	\bibitem{MurotaShioura14}		
	K. Murota and A. Shioura: 
	Exact bounds for steepest descent algorithms of 
	L-convex function minimization, 
	{\it Operations Research Letters} {\bf 42} (2014) 361--366.	
		
	\bibitem{OkiHJ19}
	T. Oki: Computing the maximum degree of minors in polynomial
	matrices over skew fields,
	In: Proceedings of the 11th Hungarian-Japanese Symposium on Discrete
	Mathematics and Its Applications (HJ 2019), 332--343.	
		
	\bibitem{Soma14}
	T. Soma: Fast deterministic algorithm for matrix completion problems,
	{\em SIAM Journal on Discrete Mathematics} {\bf 28} (2014) 490--502.	
	
	\bibitem{Taelman06}
	L. Taelman: Dieudonn\'e determinants for skew polynomial rings, 
	{\it Journal of Algebra and Its Applications} {\bf 5} (2006) 89--93.
	
	\bibitem{Tits}
	J. Tits: {\it Buildings of Spherical Type and Finite BN-pairs}, 
	Lecture Notes in Mathematics, Vol. 386. Springer-Verlag, Berlin-New York, 1974.
		


\end{thebibliography}
\end{document}